\def\@settitle{%
  \vspace*{-20pt}
  \begin{flushleft}%
    \baselineskip14\p@\relax
    \normalfont\bfseries\LARGE
    \@title
  \end{flushleft}%
}
\def\@setauthors{%
  \begingroup
  \def\thanks{\protect\thanks@warning}%
  \trivlist
  \large \@topsep30\p@\relax
  \advance\@topsep by -\baselineskip
  \item\relax
  \author@andify\authors
  \def\\{\protect\linebreak}%
  \authors
  \ifx\@empty\contribs
  \else
    ,\penalty-3 \space \@setcontribs
    \@closetoccontribs
  \fi
  \normalfont
  \@setaddresses
  \endtrivlist
  \endgroup
}
\def\@setaddresses{\par
  \nobreak \begingroup
  \small
  \def\author##1{\nobreak\addvspace\smallskipamount}%
  \def\\{\unskip, \ignorespaces}%
  \interlinepenalty\@M
  \def\address##1##2{\begingroup
    \par\addvspace\bigskipamount\noindent
    \@ifnotempty{##1}{(\ignorespaces##1\unskip) }%
    {\ignorespaces##2}\par\endgroup}%
  \def\curraddr##1##2{\begingroup
    \@ifnotempty{##2}{\nobreak\noindent\curraddrname
      \@ifnotempty{##1}{, \ignorespaces##1\unskip}\/:\space
      ##2\par}\endgroup}%
  \def\email##1##2{\begingroup
    \@ifnotempty{##2}{\nobreak\noindent E-mail address%
      \@ifnotempty{##1}{, \ignorespaces##1\unskip}\/:\space
      \ttfamily##2\par}\endgroup}%
  \def\urladdr##1##2{\begingroup
    \def~{\char`\~}%
    \@ifnotempty{##2}{\nobreak\noindent\urladdrname
      \@ifnotempty{##1}{, \ignorespaces##1\unskip}\/:\space
      \ttfamily##2\par}\endgroup}%
  \addresses
  \endgroup
  \global\let\addresses=\@empty
}
\def\@setabstracta{%
    \ifvoid\abstractbox
  \else
    \skip@25\p@ \advance\skip@-\lastskip
    \advance\skip@-\baselineskip \vskip\skip@
    \box\abstractbox
    \prevdepth\z@ 
    \vskip-10pt
  \fi
}
\renewenvironment{abstract}{%
  \ifx\maketitle\relax
    \ClassWarning{\@classname}{Abstract should precede
      \protect\maketitle\space in AMS document classes; reported}%
  \fi
  \global\setbox\abstractbox=\vtop \bgroup
    \normalfont\small
    \list{}{\labelwidth\z@
      \leftmargin0pc \rightmargin\leftmargin
      \listparindent\normalparindent \itemindent\z@
      \parsep\z@ \@plus\p@
      
    }%
    \item[\hskip\labelsep\bfseries\abstractname.]%
}{%
  \endlist\egroup
  \ifx\@setabstract\relax \@setabstracta \fi
}
\def\section{\@startsection{section}{1}%
  \z@{-1.2\linespacing\@plus-.5\linespacing}{.8\linespacing}%
  {\normalfont\bfseries\Large}}
\def\subsection{\@startsection{subsection}{2}%
  \z@{-.8\linespacing\@plus-.3\linespacing}{.3\linespacing\@plus.2\linespacing}%
  {\normalfont\bfseries}}
\def\subsubsection{\@startsection{subsection}{3}%
  \z@{.7\linespacing\@plus.2\linespacing}{-1.5ex}%
  {\normalfont\itshape}}
\def\@secnumfont{\bfseries}
\def\to{\mathchoice{\longrightarrow}{\rightarrow}{\rightarrow}{\rightarrow}}
\newcommand{\shortxra}[2][]{\ext@arrow 0359\rightarrowfill@{#1}{#2}}
\def\longrightarrowfill@{\arrowfill@\relbar\relbar\longrightarrow}
\newcommand{\longxra}[2][]{\ext@arrow 0359\longrightarrowfill@{#1}{#2}}
\renewcommand{\xrightarrow}[2][]{\mathchoice{\longxra[#1]{#2}}%
  {\shortxra[#1]{#2}}{\shortxra[#1]{#2}}{\shortxra[#1]{#2}}}
\def\otimesover#1{\mathbin{\mathop{\otimes}_{#1}}}
\def\Nopagebreak{\@nobreaktrue\nopagebreak}
\theoremstyle{plain}
\newtheorem{theorem}{Theorem}[section]
\newtheorem{proposition}[theorem]{Proposition}
\newtheorem{corollary}[theorem]{Corollary}
\newtheorem{lemma}[theorem]{Lemma}
\newtheorem{assertion}{Assertion}
\theoremstyle{definition}
\newtheorem{definition}[theorem]{Definition}
\newtheorem{example}[theorem]{Example}
\newtheorem{remark}[theorem]{Remark}
\def\Z{\mathbb{Z}}
\def\Q{\mathbb{Q}}
\def\R{\mathbb{R}}
\def\C{\mathbb{C}}
\def\N{\mathcal{N}}
\def\F{\mathcal{F}}
\def\K{\mathcal{K}}
\def\cC{\mathcal{C}}
\def\cP{\mathcal{P}}
\def\Ker{\operatorname{Ker}}
\def\Coker{\operatorname{Coker}}
\def\Im{\operatorname{Im}}
\def\Hom{\operatorname{Hom}}
\def\Tor{\operatorname{Tor}}
\def\Ext{\operatorname{Ext}}
\def\sign{\operatorname{sign}}
\def\rank{\operatorname{rank}}
\def\inte{\operatorname{int}}
\def\Bl{B\ell}
\def\lk{\operatorname{lk}}
\def\Arf{\operatorname{Arf}}
\def\ltr{\operatorname{tr}^{(2)}}
\def\ldim{\dim^{(2)}}
\def\lsign{\sign^{(2)}}
\def\lt{\ell^2}
\def\Lt{L^2}
\def\rhot{\rho^{(2)}}
\def\mathbinover#1#2{\mathbin{\mathop{#1}\limits_{#2}}}
\def\amalgover#1{\mathbinover{\amalg}{#1}}
\def\emptystr{}
\newcommand{\mkc}[2][]{\begin{color}{red}#2%
  \def\tempstr{#1}%
  \ifx\tempstr\emptystr \else\textsf{\SMALL\ \raise.7ex\hbox{[\tempstr]}}\fi
\end{color}}
\begin{document}

\title%
{Amenable $L^2$-theoretic methods and knot concordance}

\author{Jae Choon Cha}

\address{Department of Mathematics and PMI\\
  POSTECH \\
  Pohang 790--784\\
  Republic of Korea}

\email{jccha@postech.ac.kr}

\def\subjclassname{\textup{2010} Mathematics Subject Classification}
\expandafter\let\csname subjclassname@1991\endcsname=\subjclassname
\expandafter\let\csname subjclassname@2000\endcsname=\subjclassname
\subjclass{%
  57M25, 
  57M27, 
  57N70, 
}

\keywords{Knot Concordance, 
  $L^2$-theoretic Method, Amenable Group}

\begin{abstract}
  We introduce new obstructions to topological knot concordance.
  These are obtained from amenable groups in Strebel's class, possibly
  with torsion, using a recently suggested $L^2$-theoretic method due
  to Orr and the author.
  Concerning $(h)$-solvable knots which are defined in terms of
  certain Whitney towers of height~$h$ in bounding 4-manifolds, we use
  the obstructions to reveal new structure in the knot concordance
  group not detected by prior known invariants: for any $n>1$ there
  are $(n)$-solvable knots which are not $(n.5)$-solvable (and
  therefore not slice) but have vanishing Cochran-Orr-Teichner
  $L^2$-signature obstructions as well as Levine algebraic
  obstructions and Casson-Gordon invariants.
\end{abstract}

\maketitle

\section{Introduction}

In this paper we introduce new obstructions to knots being
topologically slice and concordant, and study the structure of the
knot concordance group using these.  Recall that two knots in $S^3$
are (topologically) \emph{concordant} if there is a locally flat
proper embedding of an annulus in $S^3\times[0,1]$ which restricts to
the given knots on the boundary components.  A knot is called
(topologically) \emph{slice} if it is concordant to the trivial knot,
or equivalently, if it bounds a locally flat 2-disk properly embedded
in the 4-ball.  The concordance classes of knots in $S^3$ form an
abelian group $\cC$ under connected sum, which is called the
\emph{knot concordance group}.  Slice knots represent the identity
in~$\cC$.  Recall that obstructions to being topologically slice or
concordant, particularly those we discuss in this paper, are also
obstructions in the smooth category.

Since the beginning due to Fox and Milnor in the 50's, significant
progress has been made in the study of the knot concordance group.
After the landmarks of Levine~\cite{Levine:1969-1,Levine:1969-2} and
Casson-Gordon~\cite{Casson-Gordon:1986-1,Casson-Gordon:1978-1}, the
latest breakthrough which opened up a new direction was made in the
work of Cochran, Orr, and Teichner~\cite{Cochran-Orr-Teichner:1999-1,
  Cochran-Orr-Teichner:2002-1}.  They developed theory of obstructions
to being slice, which detects non-slice examples for which prior
invariants of Levine and Casson-Gordon vanish.  Their obstructions are
$L^2$-signature defects of bounding 4-manifolds, or equivalently, the
von Neumann-Cheeger-Gromov $\rho$-invariants of 3-manifolds,
associated to certain homomorphisms of the fundamental group into
\emph{poly-torsion-free-abelian (PTFA)} groups.  We recall that a
group $G$ is PTFA if it admits a subnormal series $G=G_0 \rhd
\def\ignoreme{G_1 \rhd} \cdots \rhd G_r=\{e\}$ with each $G_i/G_{i+1}$
torsion-free abelian.  Subsequent to their
work~\cite{Cochran-Orr-Teichner:1999-1, Cochran-Orr-Teichner:2002-1},
many interesting new results on concordance, homology cobordism and
related topics have been obtained using the PTFA $L^2$-signatures by
several authors.

As the main result of this paper, we obtain new obstructions which
detect many elements in the knot concordance group that are not
distinguished by any previously known obstructions including the PTFA
$L^2$-signatures and the invariants of Levine and Casson-Gordon.

To give a more precise description of our results, we recall a
framework of recent systematic study of the knot concordance group
under which known obstructions are best understood.  In
\cite{Cochran-Orr-Teichner:1999-1}, Cochran, Orr, and Teichner defined
a geometrically defined filtration
\[
0\subset \cdots \subset \F_{n.5} \subset \F_{n} \subset \cdots \subset \F_1
\subset \F_{0.5} \subset \F_0 \subset \cC
\]
of the knot concordance group~$\cC$ indexed by half integers, which is
closely related to the theory of topological 4-manifolds via Whitney
towers and gropes.  (For definitions and related discussions, readers
are referred to \cite[Sections~7,~8]{Cochran-Orr-Teichner:1999-1}.)
Loosely speaking, $\F_n$ is the subgroup of the classes of
$(n)$-solvable knots, where a knot $K$ is defined to be
\emph{$(n)$-solvable} if its zero-surgery manifold $M(K)$ bounds a
spin 4-manifold $W$ which has $H_1(W)\cong \Z$ generated by a meridian
and admits a Whitney tower of height $n$ for a spherical lagrangian of
the intersection form of $W$ in the sense of \cite
{Cochran-Orr-Teichner:1999-1}.  Such $W$ is called an
\emph{$(n)$-solution} for~$K$.  An $(n.5)$-solvable knot and its
$(n.5)$-solution are defined similarly as refinements between level
$n$ and~$n+1$.  It is well known that a slice knot is $(h)$-solvable
for any $h$, with the slice disk exterior as an $(h)$-solution.  The
obstructions from Levine's algebraic invariants and Casson-Gordon
invariants vanish if a knot is $(0.5)$- and $(1.5)$-solvable,
respectively.

In~\cite{Cochran-Orr-Teichner:1999-1}, Cochran-Orr-Teichner introduced
PTFA $L^2$-signature obstructions to being $(n.5)$-solvable and
initiated the study of concordance of highly solvable knots for which
Levine and Casson-Gordon invariants vanish:

\begin{theorem}[{\cite[Theorem~4.2]{Cochran-Orr-Teichner:1999-1}}]
  \label{theorem:COT-obstruction}
  Suppose $K$ is an $(n.5)$-solvable knot.  Suppose $G$ is a PTFA
  group, $G^{(n+1)}=\{e\}$, and $\phi\colon \pi_1 M(K) \to G$ is a
  homomorphism extending to an $(n.5)$-solution~$W$.  Then
  $\rhot(M(K),\phi)=0$.
\end{theorem}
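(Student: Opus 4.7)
The plan is to compute $\rhot(M(K),\phi)$ as the signature defect of the bounding 4-manifold $W$ supplied by the hypothesis. Since $\phi$ extends to a homomorphism $\Phi\colon \pi_1(W)\to G$, the standard identification of the $L^2$-$\rho$-invariant as a signature defect gives
\[
  \rhot(M(K),\phi) = \lsign(W,\Phi) - \sign(W),
\]
so it is enough to show that both signatures on the right vanish.

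Unpacking the definition of $(n.5)$-solvability, $W$ carries immersed surfaces $L_1,\dots,L_k,D_1,\dots,D_k$ whose classes form a pair of complementary half-bases of $H_2(W;\Z)$ modulo torsion, with intersections $L_i\cdot L_j=0$ and $L_i\cdot D_j=\delta_{ij}$, and with the crucial depth conditions $\pi_1(L_i)\subset\pi_1(W)^{(n+1)}$ and $\pi_1(D_i)\subset\pi_1(W)^{(n)}$. The existence of the Lagrangian $\{L_i\}$ together with its dual basis $\{D_i\}$ immediately gives $\sign(W)=0$.

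The substantial step is $\lsign(W,\Phi)=0$. The key algebraic observation is that $\Phi$ kills $\pi_1(W)^{(n+1)}$, because homomorphisms carry derived subgroups to derived subgroups and $G^{(n+1)}=\{e\}$. Consequently every loop on $L_i$ maps trivially under $\Phi$, so $L_i$ itself lifts to the regular $G$-cover $\widetilde W$, defining a class $\widetilde L_i\in H_2(W;\N G)$. The geometric relations $L_i\cdot L_j=0$ promote to vanishing of the $\N G$-valued twisted intersection pairing among the $\widetilde L_i$. I would then argue that the $\N G$-submodule spanned by $\{\widetilde L_i\}$ realizes exactly half of $\ldim_{\N G}H_2(W;\N G)$, producing a Lagrangian for the twisted intersection form, whence $\lsign(W,\Phi)=0$.

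The main obstacle is the rank accounting in this last step. If $\Phi|_{\pi_1 M(K)}$ is trivial the twisted signature reduces to the ordinary one and there is nothing to do, so one may assume nontriviality; then by the standard $L^2$-vanishing for PTFA coefficients applied to the zero-surgery manifold one has $\ldim_{\N G}H_*(M(K);\N G)=0$ in every degree. Feeding this into the long exact sequence of the pair $(W,M(K))$ and combining with Poincar\'e-Lefschetz duality over $\N G$ and an Euler-characteristic calculation pins down $\ldim_{\N G}H_2(W;\N G)=2k$; a linear-independence argument for the lifts $\{\widetilde L_i,\widetilde D_i\}$ then yields the required half-rank Lagrangian conclusion.
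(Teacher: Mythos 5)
The decisive step of your argument is missing. Your outline reproduces the right skeleton (compute the signature defect over the $(n.5)$-solution, observe that the lifted $(n+1)$-lagrangian is isotropic, pin down $\ldim H_2(W;\N G)=2k$ by an Euler-characteristic count), but the one claim that carries all of the technical content --- that the $\N G$-submodule generated by the lifts $\widetilde L_i$ has $L^2$-dimension at least $k$ --- is simply asserted as ``a linear-independence argument.'' What the $(n)$-duals actually give you is linear independence of the images of the $\tilde x_i$ in $H_2(W;\Z)$ (or $H_2(W;\Q)$), and this does not formally transfer to a lower bound on the $L^2$-dimension of their span after the coefficient change $\Z[\pi_1(W)/\pi_1(W)^{(n+1)}]\to\Z G\to\N G$: classes can die or become dependent under such a change. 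Bridging exactly this point is where the real work lies. In this paper it is done by Theorem~\ref{theorem:L2-dim-estimate-of-NG-homology}, which rests on Lemma~\ref{lemma:L2-dim-of-im-and-ker} (L\"uck's vanishing of $\ldim\Tor_1^{\C G}(\N G,-)$ for amenable $G$ together with Strebel's $D(R)$-property); in Cochran--Orr--Teichner's original proof the analogous rank comparison is carried out via the Ore skew field of $\Q G$ and the algebra of affiliated operators. Without some such lemma your proof does not close. (Note also that the paper does not reprove this statement directly: it deduces it from the more general Theorem~\ref{theorem:n.5-solvable-obstruction-amenable-in-D(R)}, whose proof handles the rank accounting by Theorem~\ref{theorem:L2-dim-estimate-of-NG-homology} and concludes with Proposition~\ref{proposition:l2-metabolizer}; your trivial-versus-nontrivial dichotomy for $\phi$ on $\pi_1 M(K)$, which works because the meridian normally generates, is indeed how the infinite-order meridian hypothesis disappears in the PTFA case.)

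Two further points need repair. First, ``the geometric relations $L_i\cdot L_j=0$ promote to vanishing of the $\N G$-valued twisted intersection pairing'' is false as stated: vanishing of ordinary intersection numbers of surfaces with $\pi_1(L_i)\subset\pi_1(W)^{(n+1)}$ does not give vanishing of equivariant intersections. What the definition of an $(n.5)$-solution provides, and what you must use, is $\lambda_{n+1}(\tilde x_i,\tilde x_j)=0$ in $\Z[\pi_1(W)/\pi_1(W)^{(n+1)}]$; since $G^{(n+1)}=\{e\}$ forces $\Phi$ to factor through that quotient, these relations do push forward to the $\N G$-valued form. Second, the principle ``half-rank isotropic submodule implies vanishing $L^2$-signature'' requires the form to be $L^2$-nonsingular --- otherwise a large radical can coexist with a definite part of nonzero signature. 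You have the needed input ($\ldim H_*(M(K);\N G)=0$ in the nontrivial case), but it must be routed through the long exact sequence of the pair to show that $H_2(W;\N G)\to H_2(W,M(K);\N G)$ is an $L^2$-equivalence, as in the proof of Theorem~\ref{theorem:n.5-solvable-obstruction-amenable-in-D(R)} and in Proposition~\ref{proposition:l2-metabolizer}; your sketch uses the vanishing only for the Euler-characteristic count and never addresses nonsingularity.
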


Here $G^{(n+1)}$ is the derived subgroup of $G$ defined inductively by
$G^{(0)}=G$, $G^{(n+1)}=[G^{(n)},G^{(n)}]$, and $\rhot(M,\phi)$ denotes
the von Neumann-Cheeger-Gromov $\rho$-invariant, which is equal to the
$L^2$-signature defect of a bounding 4-manifold.  (For its definition,
see, e.g.,
Section~\ref{subsection:proof-of-theorem-slice-obstruction}.)

Since a PTFA group $G$ satisfies $G^{(n)}=\{e\}$ for some $n$ and a
slice disk exterior of a knot $K$ is an $(h)$-solution for any $h$, one
has the following consequence of
Theorem~\ref{theorem:COT-obstruction}: \emph{For any slice knot $K$,
  if $G$ is PTFA and $\phi\colon \pi_1 M(K) \to G$ extends to a slice
  disk exterior, then $\rhot(M,\phi)=0$.}

We remark that
for \emph{link} concordance, there are other recent techniques
revealing deep information that is invisible via $L^2$-signatures.
For example, the $L$-group valued Hirzebruch-type invariants from
iterated $p$-covers \cite{Cha:2007-1,Cha:2007-2}, covering link
calculus techniques
\cite{Cha-Livingston-Ruberman:2006-1,Cha-Kim:2008-1,VanCott:2009-01},
and twisted torsion invariants \cite{Cha-Friedl:2010-01} are known to
detect interesting highly solvable non-slice links for which
$L^2$-signatures are not effective.

However, for \emph{knots}, the PTFA $L^2$-signatures have been known
as the only useful tool to distinguish highly solvable knots up to
concordance, particularly for those with vanishing Casson-Gordon and
Levine obstructions.  Roughly speaking, this interesting subtlety
peculiar to knots is related to the ``size'' of the fundamental
group---for knots and 3-manifolds with the first $\Z_p$-Betti number
$b_1(M;\Z_p)\le 1$ for any prime~$p$, the only previously known
nonabelian coverings from which one can extract information on
concordance and homology cobordism are PTFA covers and certain
metabelian covers considered by Casson-Gordon.  Indeed all known
results on knot concordance beyond Levine and Casson-Gordon invariants
essentially depend on Theorem~\ref{theorem:COT-obstruction}.  (See
also the remarkable works of Cochran-Teichner
\cite{Cochran-Teichner:2003-1} and Cochran-Harvey-Leidy
\cite{Cochran-Harvey-Leidy:2009-1, Cochran-Harvey-Leidy:2008-2,
  Cochran-Harvey-Leidy:2009-02, Cochran-Harvey-Leidy:2009-03}.)


Recently, in \cite{Cha-Orr:2009-01}, Orr and the author have presented
a new $L^2$-theoretic method relating homological properties and
$L^2$-invariants, using a result of L\"uck.  Their approach differs
fundamentally from the prior PTFA techniques that are mostly
algebraic.  The approach in \cite{Cha-Orr:2009-01} extends the use of
$L^2$-signatures to a significantly larger class of groups, namely, the
class of amenable groups lying in Strebel's class $D(R)$.  (Here $R$
is a commutative ring; see
Section~\ref{subsection:L2-homology-and-amenable-groups-in-D(R)} for
definitions and related discussions.)  This class contains several
interesting infinite/finite non-torsion-free groups, and subsumes PTFA
groups.  For example, see Lemma~\ref{lemma:poly-mixed-groups}.
In \cite{Cha-Orr:2009-01} they proved the homology cobordism
invariance of $L^2$-signature defects and $L^2$-Betti numbers
associated to amenable groups in $D(R)$ and gave several examples and
applications.

In this paper, we further develop the $L^2$-theoretic method initiated
in \cite{Cha-Orr:2009-01} to study the structure of the knot
concordance group beyond the information from PTFA $L^2$-signatures
and invariants of Casson-Gordon and Levine.  First we give new
obstructions to a knot being slice and to being $(n.5)$-solvable.

\begin{theorem}
  \label{theorem:intro-obstruction-slice}
  Suppose $K$ is a slice knot and $G$ is an amenable group lying in
  Strebel's class $D(R)$ for some $R$.  If $\phi\colon \pi_1M(K) \to
  G$ is a homomorphism extending to a slice disk exterior for $K$,
  then $\rhot(M(K),\phi)=0$.
\end{theorem}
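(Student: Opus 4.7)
The plan is to model this on the standard Cochran-Orr-Teichner argument for \ref{theorem:COT-obstruction}, but exploit the fact that a slice disk exterior is homologically much simpler than a general $(n.5)$-solution, which lets us bypass the PTFA-specific localization machinery and replace it with the amenable/Strebel input from \cite{Cha-Orr:2009-01}. Let $D\subset B^4$ be a slice disk for $K$ and let $W$ be its exterior, so $\partial W = M(K)$, $H_1(W;\Z)\cong\Z$ is generated by a meridian, and $H_2(W;\Z)=0$. By hypothesis $\phi\colon\pi_1 M(K)\to G$ extends to $\tilde\phi\colon\pi_1 W\to G$.

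First I would invoke the standard identification of the Cheeger-Gromov $\rho$-invariant as an $L^2$-signature defect of a bounding $4$-manifold:
\[
\rhot(M(K),\phi) \;=\; \lsign_G(W,\tilde\phi) - \sign(W).
\]
The ordinary signature term vanishes immediately, since $H_2(W;\Q)=0$ implies the intersection form on $W$ is trivial. So the entire problem reduces to showing that the $L^2$-signature $\lsign_G(W,\tilde\phi)$, which is defined in terms of the equivariant intersection form on $H_2(W;\N G)$, is zero.

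For this, the clean way is to prove the stronger statement that $H_2(W;\N G)$ itself has vanishing von Neumann dimension, i.e.\ $b_2^{(2)}(W,\tilde\phi)=0$; if the module is $\N G$-dimension zero then every sesquilinear form on it has vanishing $L^2$-signature. This is precisely where the amenability hypothesis enters: the central technical result of \cite{Cha-Orr:2009-01}, proved via L\"uck's approximation theorem combined with Strebel's property $D(R)$, provides a comparison between $L^2$-Betti numbers with $\N G$-coefficients and ordinary Betti numbers with $R$-coefficients for amenable $G\in D(R)$. Choosing $R$ as in the hypothesis (after checking $\pi_1 W\to G$ lands in a group of the right type), and using $H_2(W;R)=0$, which follows from $H_2(W;\Z)=0$ and universal coefficients, should give $\ldim_{\N G} H_2(W;\N G)=0$.

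The routine parts are the signature-defect formula (standard Atiyah/L\"uck $L^2$-index theory) and the universal-coefficients vanishing of $H_2(W;R)$. The main obstacle, and what must be carried out carefully, is step three: locating the precise comparison statement from \cite{Cha-Orr:2009-01} in the form needed here, and verifying its hypotheses for the image of $\tilde\phi$ in $G$. In particular one must check that the conclusion about $L^2$-dimensions applies to the specific chain complex of $W$ with twisted $\N G$-coefficients, and that the appropriate ring $R$ matches the Strebel class containing~$G$. Once that comparison is in hand, the vanishing of the $L^2$-signature, and hence of $\rhot(M(K),\phi)$, is immediate.
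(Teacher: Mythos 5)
Your outline follows the paper up to the key step, but the central claim --- that $H_2(W;R)=0$ together with the comparison theorem of \cite{Cha-Orr:2009-01} yields $\ldim H_2(W;\N G)=0$ --- has a genuine gap. The result you are implicitly invoking (Theorem~\ref{theorem:cha-orr-local-property-NG}, i.e.\ \cite[Theorem~6.6]{Cha-Orr:2009-01}) is not a degree-by-degree comparison: it requires $H_i(R\otimes_{\Z G}C_*)=0$ for \emph{all} $i\le n$ before it gives any conclusion in degree $n$. For the absolute chain complex of the slice disk exterior $W$ this hypothesis fails already at the bottom, since $H_0(W;R)\cong R\neq 0$ and $H_1(W;R)\cong R\neq 0$, so you cannot conclude anything about $\ldim H_2(W;\N G)$ from $H_2(W;R)=0$ alone by that theorem. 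A genuine degree-wise inequality $\ldim H_2(W;\N G)\le \dim_R H_2(W;R)$ does exist, but it is Theorem~\ref{theorem:L2-dim-estimate-of-NG-homology}(1) of the present paper (proved in Section~\ref{section:obstruction-to-solvability} using amenability plus Strebel's condition in an essential way), and it is established only for $R=\Q$ or $\Z_p$; the slice theorem you are proving allows an arbitrary commutative ring $R$, so this route would not recover the stated generality. Note also that the paper's later computation $b^{(2)}_2(X)=b_2(X;R)$ (Lemma~\ref{lemma:dim-of-l2-H2}) needs an infinite-order meridian hypothesis via Lemma~\ref{lemma:vanishing-of-l2-H1}, which is absent here, so the stronger statement $b^{(2)}_2(W)=0$ that your plan rests on is not available from the quoted tools in this setting.

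The paper's proof avoids this entirely by working with the \emph{relative} complex: by Alexander duality $H_i(W,M(K);R)=0$ for all $i\le 2$ (here one uses $H_2(W;R)=0$ and $H_1(M(K);R)\cong H_1(W;R)$), so Theorem~\ref{theorem:cha-orr-local-property-NG} legitimately applies and gives $\ldim H_2(W,M(K);\N G)=0$. One then does not need $\ldim H_2(W;\N G)=0$ at all: the nonsingular part of the $\N G$-valued intersection form on $H_2(W;\N G)$ is supported on the image of $H_2(W;\N G)\to H_2(W,M(K);\N G)$, which has $L^2$-dimension zero, whence $\lsign_G(W)=0$; the ordinary signature vanishes since $H_2(W,M(K);\Z)=0$. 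If you replace your step three by this relative argument (or, alternatively, restrict to $R=\Q,\Z_p$ and invoke Theorem~\ref{theorem:L2-dim-estimate-of-NG-homology}), your proof goes through; as written, the deduction of $b_2^{(2)}(W)=0$ is unsupported.
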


\begin{theorem}
  \label{theorem:intro-obstruction-n.5-solvable}
  Suppose $K$ is an $(n.5)$-solvable knot.  Suppose $G$ is an amenable
  group lying in Strebel's class $D(R)$ where $R$ is $\Q$ or $\Z_p$,
  $G^{(n+1)}=\{e\}$, and $\phi\colon \pi_1M(K) \to G$ is a
  homomorphism which extends to an $(n.5)$-solution for $K$ and sends
  a meridian to an infinite order element in~$G$.  Then
  $\rhot(M(K),\phi)=0$.
\end{theorem}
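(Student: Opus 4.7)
The plan is to adapt the Cochran--Orr--Teichner strategy used to prove Theorem~\ref{theorem:COT-obstruction}, replacing the skew-field and Ore-localization input specific to PTFA coefficients with the new $L^2$-theoretic comparison machinery for amenable groups in $D(R)$ developed in~\cite{Cha-Orr:2009-01}. Concretely, let $W$ be an $(n.5)$-solution for $K$ to which $\phi$ extends as a homomorphism $\tilde\phi\colon \pi_1 W \to G$, and use the identification $\rhot(M(K),\phi) = \lsign(W,\tilde\phi) - \sign(W)$, where $\lsign(W,\tilde\phi)$ is the $L^2$-signature of the twisted intersection form on $H_2(W;\N G)$. The ordinary signature $\sign(W)$ already vanishes because an $(n.5)$-solution carries an ordinary Lagrangian for its intersection form (a consequence of $(0.5)$-solvability). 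The work is therefore to show that $\lsign(W,\tilde\phi) = 0$.

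For this vanishing, I would exhibit a half-rank summand of $H_2(W;\N G)$ on which the twisted intersection form vanishes. The surfaces $\ell_1,\dots,\ell_r$ forming an $n$-Lagrangian in $W$ may be chosen so that $\pi_1(\ell_i)$ lies in $\pi_1(W)^{(n)}$. Under $\tilde\phi$ these subgroups land in $G^{(n)}$, and since $G^{(n+1)}=\{e\}$ by hypothesis, any two such images commute. This forces the $L^2$-self- and cross-intersections of the corresponding classes in $H_2(W;\N G)$ to vanish, producing a candidate isotropic subspace.

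The main obstacle is then the dimension count: one must verify that this candidate genuinely has $L^2$-dimension equal to $\tfrac12 \ldim H_2(W;\N G)$. In the PTFA setting this follows from Ore-localization and a skew-field rank calculation, which is unavailable here because $G$ may have torsion and $\Z G$ need not embed in a skew field. Instead, I would use the mapping cylinder of $M(K)\hookrightarrow W$ to reduce the computation to a comparison between the $L^2$-Betti numbers of $W$ and of $M(K)$; the assumption that $\phi$ carries a meridian to an infinite-order element is exactly what ensures that $H_*(M(K);\N G)$ behaves in the same way as in the PTFA case (where infinite order of nontrivial elements is automatic). To translate the resulting rank identities from $R$-coefficients to $\N G$-coefficients, one invokes the key dimension-comparison result of~\cite{Cha-Orr:2009-01}, which is valid precisely because $G$ lies in Strebel's class $D(R)$ and is amenable. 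The restriction $R\in\{\Q,\Z_p\}$ is used to guarantee the $R$-coefficient vanishing/rank identities one needs as input, and this step---orchestrating the Cha--Orr comparison with the $(n.5)$-solution data---is where the principal technical difficulty of the proof will lie.
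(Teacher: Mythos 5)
Your overall framework (compute $\rhot$ as $\lsign_G(W)-\sign(W)$ for an $(n.5)$-solution $W$, kill $\sign(W)$ with the ordinary lagrangian, and establish a half-$L^2$-dimensional isotropic submodule using the Cha--Orr dimension comparison plus the meridian condition) is the paper's strategy, but the step where you produce the isotropic submodule is wrong as stated. You take the $(n)$-lagrangian, represented by surfaces $\ell_i$ with $\pi_1(\ell_i)\subset\pi_1(W)^{(n)}$, and argue that because their images lie in the abelian group $G^{(n)}$ (as $G^{(n+1)}=\{e\}$), the $\N G$-valued self- and cross-intersections vanish. Commutativity of these image subgroups does not kill equivariant intersection numbers: the form on $H_2(W;\N G)$ records intersections in the full $G$-cover, and since $\phi$ only factors through $\pi_1(W)/\pi_1(W)^{(n+1)}$ (not through $\pi_1(W)/\pi_1(W)^{(n)}$), the $(n)$-lagrangian condition $\lambda_n(x_i,x_j)=0$ controls only the $G/G^{(n)}$-level data and says nothing about the $G^{(n)}$-indexed coefficients. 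If your mechanism were valid, the theorem would hold for merely $(n)$-solvable knots, contradicting Theorem~\ref{theorem:intro-linearly-independent-examples} (and, for $n=1$, the Casson--Gordon phenomenon). The ``.5'' in the hypothesis must enter exactly here: an $(n.5)$-solution supplies lifts $\tilde x_i\in H_2(W;\Z[\pi/\pi^{(n+1)}])$ with $\lambda_{n+1}(\tilde x_i,\tilde x_j)=0$ (Definition~\ref{definition:solvable-knots}), and since $G^{(n+1)}=\{e\}$ the coefficient map $\Z\pi\to\Z G$ factors through $\Z[\pi/\pi^{(n+1)}]$, so naturality of intersection forms makes the images of the $\tilde x_i$ isotropic in $H_2(W;\N G)$. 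Your proposal never uses this lifted lagrangian, so it has not used $(n.5)$-solvability at all beyond $(n)$-solvability.

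The dimension count you defer is also where a second ingredient you omit is needed: the $(n)$-dual classes $y_i$. In the paper, the duals certify that the images of the $\tilde x_i$ in $H_2(W;R)$ are $R$-linearly independent (dimension $r$), and then the \emph{relative} estimate of Theorem~\ref{theorem:L2-dim-estimate-of-NG-homology}(2), together with $b^{(2)}_2(W)=b_2(W;R)=b_2(W)=2r$ (Lemmas~\ref{lemma:vanishing-of-l2-H1}, \ref{lemma:dim-of-l2-H2}, \ref{lemma:mod-p-betti-number-of-solution}, the last being where $R=\Z_p$ needs the no-$p$-torsion property of a solution), gives $\ldim H\ge r=\tfrac12\ldim H_2(W;\N G)$; the meridian condition yields $b^{(2)}_1(M(K))=0$, hence $L^2$-nonsingularity of the form, and Proposition~\ref{proposition:l2-metabolizer} concludes $\lsign_G(W)=0$. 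Your sketch correctly points at the Cha--Orr comparison and the meridian hypothesis for this part, but without the duals and the relative form of the estimate the half-dimension bound does not follow.
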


We remark that the meridian condition in
Theorem~\ref{theorem:intro-obstruction-n.5-solvable} is not a severe
restriction since it is satisfied in most cases.
It can be seen that Cochran-Orr-Teichner's
Theorem~\ref{theorem:COT-obstruction} is a consequence of our
Theorem~\ref{theorem:intro-obstruction-n.5-solvable}.

An important aspect of the new slice obstruction
(Theorem~\ref{theorem:intro-obstruction-slice}) is that it is
\emph{not} a consequence of the obstruction to being $(n.5)$-solvable
(Theorem~\ref{theorem:intro-obstruction-n.5-solvable}), contrary to
the PTFA case.  The group $G$ in
Theorem~\ref{theorem:intro-obstruction-slice} may be
\emph{non-solvable}.
This offers a very interesting potential to detect non-slice knots
which look like slice knots on invariants from solvable groups.  This
will be addressed in a subsequent paper.

As an application of
Theorem~\ref{theorem:intro-obstruction-n.5-solvable}, for each $n$, we
produce a large family of $(n)$-solvable knots which are not
$(n.5)$-solvable but not detected by the PTFA $L^2$-signatures:

\begin{theorem}
  \label{theorem:intro-linearly-independent-examples}
  For any $n$, there are infinitely many $(n)$-solvable knots $J^i$
  $(i=1,2,\ldots)$ satisfying the following: any linear combination
  $\#_i\, a_iJ^i$ under connected sum is an $(n)$-solvable knot with
  vanishing PTFA $L^2$-signature obstructions, but whenever $a_i\ne 0$
  for some~$i$, $\#_i\, a_iJ^i$ is not $(n.5)$-solvable.  Consequently
  the $J^i$ generate an infinite rank subgroup in $\F_n/\F_{n.5}$
  which is invisible via PTFA $L^2$-signature obstructions.
\end{theorem}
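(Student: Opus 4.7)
The plan is an iterated satellite (infection) construction modeled on that of Cochran--Orr--Teichner and Cochran--Harvey--Leidy, but tuned so that PTFA $L^2$-signature obstructions vanish while the amenable $L^2$-signature obstruction from Theorem~\ref{theorem:intro-obstruction-n.5-solvable} remains nonzero. Starting from a genus-one seed knot $R$ of Arf invariant zero with nonzero integral of Levine--Tristram signatures, I would build, for each $i$, a depth-$n$ infection of $R$ indexed by a distinct prime $p_i$, infecting along curves lying in the $n$-th term of the rational derived series of the ambient knot group. Standard satellite arguments then show that every connected sum $\#_i a_i J^i$ is $(n)$-solvable, with explicit $(n)$-solutions assembled from Seifert-surface cobordisms of the infecting knots, and that the PTFA $L^2$-signature obstructions cancel because the first-order Levine--Tristram integrals arising from the Cochran--Orr--Teichner formula for iterated satellites cancel over the eigenvalues of the relevant higher-order Blanchfield forms.

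The core of the proof is detecting non-$(n.5)$-solvability of $K := \#_i a_i J^i$ whenever some $a_i \neq 0$. Fix an index $i_0$ with $a_{i_0} \neq 0$, set $p := p_{i_0}$, and build a specific amenable target group $G \in D(\Z_p)$ whose derived series has a single $\Z_p$-torsion quotient inserted at a strategic level; its amenability and $D(\Z_p)$-membership would be supplied by Lemma~\ref{lemma:poly-mixed-groups}, and its non-PTFA nature is exactly what lets it reach beyond Theorem~\ref{theorem:COT-obstruction}. Assuming for contradiction an $(n.5)$-solution $W$, I would construct a representation $\phi\colon\pi_1 M(K)\to G$ sending the meridian to an infinite-order element and extend it to $\pi_1(W)$ inductively along the mixed-coefficient derived series of $\pi_1(W)$. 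Theorem~\ref{theorem:intro-obstruction-n.5-solvable} would then force $\rhot(M(K),\phi)=0$, contradicting an explicit computation of this $\rho$-invariant as a sum of Levine--Tristram signatures of $R$ weighted by dimensions of the relevant $\Z_p$-isotypical pieces: the prime choices $p_i$ are engineered so that for $j \neq i_0$ the knot $J^j$ contributes trivially to the $p_{i_0}$-test, leaving a nonzero $a_{i_0}$-multiple of the Levine--Tristram integral of~$R$. Since $i_0$ was arbitrary among indices with $a_{i_0}\neq 0$, the $J^i$ are $\Z$-linearly independent in $\F_n/\F_{n.5}$.

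The main obstacle is the extension step: given only an $(n.5)$-solution $W$ (not a slice disk exterior), one must lift a non-PTFA amenable representation $\phi\colon\pi_1 M(K)\to G$ to $\pi_1(W)$. In the PTFA setting this relies on Strebel's vanishing theorem for $\Z G$-coefficient homology together with the Cochran--Orr--Teichner control of higher-order Alexander modules; in our mixed-torsion setting the same extendability must be run over $\Z_p$ rather than $\Z$, using that $G \in D(\Z_p)$, and alternated carefully between the torsion-free and $\Z_p$-torsion quotients of the derived series. Once this step is secured, the remaining non-vanishing $\rho$-invariant computation and the linear-independence argument are essentially standard applications of the additivity of $\rho$-invariants under infection.
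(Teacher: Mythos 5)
Your overall strategy---iterated infection with seed knots indexed by distinct primes, vanishing of PTFA obstructions via $\Arf=0$ and $\int_{S^1}\sigma=0$, and detection through the amenable obstruction applied to a mixed (rational, then mod $p$) commutator-series quotient---matches the paper. But the proposal has a genuine gap at exactly the point you call ``the main obstacle,'' and you have misidentified what that obstacle is. Extending the representation over the hypothetical $(n.5)$-solution is not the issue: in the actual argument one assembles an auxiliary 4-manifold $W_0$ from the $(n.5)$-solution $V$, the standard infection cobordisms $E_k$, the connected-sum cobordism $C$, and $(n)$-solutions $V_{i,r}$ for the other summands (these last are indispensable for handling arbitrary linear combinations $\#_i a_iJ^i$), and then simply takes $G=\pi_1(W_0)/\cP^{n+1}\pi_1(W_0)$, so extension is automatic. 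The real difficulty is \emph{nontriviality}: one must show that, deep in the tower, the meridian of the seed knot $J^1_0$ has image of order exactly $p_1$ in $G$ (finite, not infinite---the infinite-order requirement of Theorem~\ref{theorem:intro-obstruction-n.5-solvable} is only needed for the top-level meridian when the theorem is applied to $V$). This is Theorem~\ref{theorem:nontriviality-of-coefficient-system}, and its proof requires a new tool your sketch never supplies: the modulo $p$ noncommutative higher-order Blanchfield pairing of Section~\ref{section:mod-p-Blanchfield-duality}, together with the self-annihilation statement for $R$-coefficient $(n)$-bordisms (Theorem~\ref{theorem:self-annihilating-submodule-blanchfield-form}) and the coefficient-change result. ``Alternating carefully between the torsion-free and $\Z_p$-torsion quotients'' hand-waves precisely the step where the rational Blanchfield argument of Cochran--Harvey--Leidy breaks down and must be replaced by its mod $p$ analogue.

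Two further points would need repair even granting the nontriviality step. First, the contradiction is not a clean computation of $\rhot(M(K),\phi)$ as a signature sum of the seed; it is a two-sided computation of the signature defect of $W_0$, which leaves uncontrolled error terms $\rhot(M(K_k),\phi_0)$ from the pattern knots. The paper kills these with the Cheeger--Gromov universal bound $L$ and the requirement (C1) that $\sigma_{J^i_0}$ at the relevant roots of unity exceed $nL$; your proposal makes no provision for these terms (nor for choosing the primes larger than the top coefficient of $\Delta_{K_0}$, which is what keeps the mod $p$ Alexander module nonzero). Second, the seed signature conditions obtainable from the standard bump-function-plus-cabling construction are only triangular ($\sigma_{J^i_0}$ vanishes at $p_j$-th roots of unity for $j<i$), so the test must be run at the \emph{minimal} index with $a_i\ne 0$, not at an arbitrary $i_0$ as you state; your claim that all $j\ne i_0$ contribute trivially at $p_{i_0}$ is stronger than what the construction provides.
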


Here we say that $J$ is \emph{an $(n)$-solvable knot $J$ with
  vanishing PTFA $L^2$-signature obstructions} if there is an
$(n)$-solution $W$ for $J$ such that for any PTFA group $G$ and for
any $\phi\colon \pi_1 M(J) \to G$ extending to $W$,
$\rhot(M(J),\phi)=0$.  (We do not require $G^{(n+1)}=\{e\}$.)  It
turns out that such knots $J$ form a subgroup $\mathcal{V}_n$ of
$\F_n$ (see
Definition~\ref{definition:knots-with-vanishing-PTFA-signature-obstructions}
and Proposition~\ref{proposition:vanishing-PTFA-subgroup}).  Obviously
$\F_{n.5}\subset \mathcal{V}_n \subset \F_n$.
Theorem~\ref{theorem:intro-linearly-independent-examples} says that
$\mathcal{V}_n/\F_{n.5}$ has infinite rank.

For knots in $\mathcal{V}_n$, the conclusion of the
Cochran-Orr-Teichner theorem obstructing being $(n.5)$-solvable
(Theorem~\ref{theorem:COT-obstruction}) holds even for some
$(n)$-solution~$W$ which is not necessarily an $(n.5)$-solution.
Consequently, all the prior techniques using the PTFA obstructions
(e.g., \cite{Cochran-Orr-Teichner:1999-1, Cochran-Orr-Teichner:2002-1,
  Cochran-Teichner:2003-1, Cochran-Kim:2004-1,
  Cochran-Harvey-Leidy:2008-2, Cochran-Harvey-Leidy:2009-1,
  Cochran-Harvey-Leidy:2009-02, Cochran-Harvey-Leidy:2009-03}) fail to
distinguish any knots in $\mathcal{V}_n$, particularly our examples in
Theorem~\ref{theorem:intro-linearly-independent-examples}, from
$(n.5)$-solvable knots up to concordance.  Obviously the invariants of
Levine and Casson-Gordon also vanish for knots in $\mathcal{V}_n$ for
$n\ge 2$.


We remark that our coefficient system used in the proof of
Theorem~\ref{theorem:intro-linearly-independent-examples} can be
viewed as a \emph{higher-order generalization} of the Casson-Gordon
metabelian setup.  Recall that
Casson-Gordon~\cite{Casson-Gordon:1986-1, Casson-Gordon:1978-1}
extract invariants from a $p$-torsion abelian cover of the infinite
cyclic cover of the zero-surgery manifold~$M(K)$.  Our coefficient
system corresponds to a tower of covers
\[
M_{n+1} \xrightarrow{p_n} M_{n} \xrightarrow{p_{n-1}} \cdots
\xrightarrow{p_1} M_1 \xrightarrow{p_0} M_0 = \text{zero-surgery
  manifold $M(K)$}
\]
where $p_0$ is the infinite cyclic cover, $p_1,\ldots,p_{n-1}$ are
torsion-free abelian covers, and $p_n$ is a $p$-torsion cover.  This
iterated covering for knots may also be compared with the iterated
$p$-cover construction in~\cite{Cha:2007-1,Cha:2007-2} which is useful
for links.


To construct such coefficient systems that factors through a given
solution, we need additional new ingredients in the proof of
Theorem~\ref{theorem:intro-linearly-independent-examples}.  First we
define a \emph{mixed-coefficient commutator series} of a group, which
generalizes the ordinary and rational derived series (see
Section~\ref{subsection:mixed-coefficient-commutator-series}).  Also,
we introduce a \emph{modulo $p$ version of the noncommutative
  higher-order Blanchfield pairing} (see
Section~\ref{section:mod-p-Blanchfield-duality}).  We combine known
ideas of PTFA techniques of Cochran-Harvey-Leidy
\cite{Cochran-Harvey-Leidy:2009-1} with these new tools in order to
construct and analyze our coefficient systems which is into certain
infinite amenable groups lying in Strebel's class with torsion.

The paper is organized as follows.  In
Section~\ref{section:slice-obstruction}, we give a proof of our slice
obstruction (Theorem~\ref{theorem:intro-obstruction-slice}), using
results of~\cite{Cha-Orr:2009-01}.  In
Section~\ref{section:obstruction-to-solvability}, we develop further
$L^2$-theoretic techniques for amenable groups in Strebel's class and
generalize some results in~\cite{Cha-Orr:2009-01}, and using these we
give a proof of our obstruction to being $(n.5)$-solvable
(Theorem~\ref{theorem:intro-obstruction-n.5-solvable}).  In
Section~\ref{section:examples-and-computation}, we construct examples
and prove Theorem~\ref{theorem:intro-linearly-independent-examples}.
In Section~\ref{section:mod-p-Blanchfield-duality}, we introduce the
modulo $p$ noncommutative Blanchfield pairing and discuss its
properties.

In this paper, manifolds are assumed to be topological and oriented,
and submanifolds are assumed to be locally flat.

\subsubsection*{Acknowledgements}
\label{subsubsection:acknowledgements}

The author owes many debts to Kent Orr, the coauthor
of~\cite{Cha-Orr:2009-01} on which this work is based.  This work was
supported by National Research Foundation of Korea (NRF) grants No.\
2009--0094069 and 2007--0054656 funded by the Korean government
(MEST).

\section{Slice knots and $L^2$-signatures over amenable groups in
  Strebel's class}
\label{section:slice-obstruction}

In this section we prove our new obstruction to being a
slice knot,
%
%
namely Theorem~\ref{theorem:intro-obstruction-slice}.

\subsection{$L^2$-homology and amenable groups in Strebel's class}
\label{subsection:L2-homology-and-amenable-groups-in-D(R)}

We start by recalling necessary results in~\cite{Cha-Orr:2009-01} on
amenable groups lying in Strebel's class~$D(R)$.  Recall that a group
$G$ is \emph{amenable} if $G$ admits a finitely-additive measure which
is invariant under the left multiplication.  For many other equivalent
definitions and further discussions, the reader is referred to,
e.g.,~\cite{Paterson:1988-1}.  Following \cite{Strebel:1974-1}, for a
commutative ring $R$ with unity, a group $G$ is said to be in $D(R)$
if a homomorphism $\alpha\colon P\to Q$ between projective
$RG$-modules is injective whenever $1_R\otimes_{RG}\alpha\colon
R\otimes_{RG}P \to R\otimes_{RG}Q$ is injective.  Here $R$ is viewed
as an $(R,RG)$-bimodule with trivial $G$-action.  For more discussions
on amenable groups lying in $D(R)$, the reader is referred to, e.g.,
\cite[Section~6]{Cha-Orr:2009-01}.

For the purpose of this paper, it suffices to use the following
description of a useful class of amenable groups in $D(\Z_p)$ which
are \emph{not torsion-free} in general:

\begin{lemma}[{\cite[Lemma~6.8]{Cha-Orr:2009-01}}]
  \label{lemma:poly-mixed-groups}
  Suppose $p$ is a fixed prime and $\Gamma$ is a group with a
  subnormal series
  \[
  \{e\} = \Gamma_n \subset \cdots \subset \Gamma_1 \subset \Gamma_0 =
  \Gamma
  \]
  such that for any $i$, $\Gamma_i/\Gamma_{i+1}$ is abelian and has no
  torsion coprime to $p$, that is, the order of an element in
  $\Gamma_i/\Gamma_{i+1}$ is either a power of $p$ or infinite.  Then
  $\Gamma$ is amenable and in $D(\Z_p)$.
\end{lemma}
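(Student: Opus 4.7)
My plan is to address the amenability and the membership in $D(\Z_p)$ separately, since they rely on different closure properties applied along the subnormal series.

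Amenability is straightforward: the class of amenable groups is closed under group extensions, and abelian groups are amenable. Inducting on the length $n$ of the series, $\Gamma$ is built from the trivial group by $n$ successive extensions with amenable quotients, and hence is amenable.

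For $\Gamma \in D(\Z_p)$, the main tool is Strebel's theorem that $D(R)$ is closed under group extensions. Applied inductively along the series, this reduces the claim to showing that every abelian quotient $A = \Gamma_i/\Gamma_{i+1}$ with no torsion coprime to $p$ lies in $D(\Z_p)$. Such an $A$ is a directed union of finitely generated subgroups $A_\lambda \cong \Z^{r_\lambda} \oplus T_\lambda$ with $T_\lambda$ a finite abelian $p$-group, and $D(R)$ is closed under directed unions of groups (a candidate kernel element for a map between projective $RA$-modules lies in a finitely generated submodule descended from $RA_\lambda$ for some $\lambda$). Hence it suffices to handle each $A_\lambda$, and one more application of extension closure reduces the problem to the atomic cases $A = \Z$ and $A = T$ (a finite $p$-group).

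The case $A = \Z$ is classical: $\Z_p[\Z] = \Z_p[t, t^{-1}]$ is a Noetherian domain, and the $D(\Z_p)$ condition follows from elementary considerations over its fraction field. The main obstacle is the case of a finite $p$-group $T$. The key observation here is that $\Z_p[T]$ is a Noetherian local ring whose unique maximal ideal is generated by $p$ together with the augmentation ideal---because $(t-1)^{p^k} \equiv 0 \pmod{p}$ whenever $t \in T$ has order dividing $p^k$, so every maximal ideal of $\Z_p[T]$ contains $t - 1$---and consequently projective $\Z_p[T]$-modules are free by Kaplansky. A standard but careful Nakayama-type analysis, combining the $p$-adic filtration with the faithful flatness of $\Z_p[T]$ over $\Z_p$, then shows that injectivity of $\Z_p \otimes_{\Z_p[T]} \alpha$ forces $\alpha$ itself to be injective, completing the proof of the lemma.
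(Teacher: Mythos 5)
Your proof is essentially correct, but note that this paper contains no proof of the lemma to compare against: it is quoted from Cha--Orr (Lemma~6.8 of \cite{Cha-Orr:2009-01}), and your route---amenability from closure of amenable groups under extensions, and $D(\Z_p)$ from Strebel's closure of $D(R)$ under extensions and directed unions, reducing to the atomic cases $\Z$ and a finite abelian $p$-group---is the standard argument underlying that citation. Two cautions. First, in this paper $\Z_p$ denotes $\Z/p$, not the $p$-adic integers, so your appeals to ``the $p$-adic filtration'' and ``faithful flatness of $\Z_p[T]$ over $\Z_p$'' are either vacuous or, if read over the $p$-adic integers, prove membership in a different class than the one asserted. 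The correct endgame for a finite $p$-group $T$ is simpler than what you sketch: in $\Z_p[T]$ one has $(t-1)^{p^k}=t^{p^k}-1=0$, so the augmentation ideal is nilpotent and $\Z_p[T]$ is local with nilpotent maximal ideal $\mathfrak{m}$; projectives are free by Kaplansky; and if $\alpha\colon P\to Q$ becomes injective mod $\mathfrak{m}$, lift a retraction $\bar\beta$ of $\bar\alpha$ to $\beta\colon Q\to P$, so that $\beta\alpha=\mathrm{id}-\nu$ with $\nu(P)\subset\mathfrak{m}P$; nilpotence of $\mathfrak{m}$ makes $\beta\alpha$ invertible, hence $\alpha$ is split injective, with no finite-generation or flatness input needed. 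Second, your directed-union step tacitly uses that the $D(R)$ condition can be tested on maps of finitely generated free modules (a kernel element involves finitely many coordinates, and the image of the corresponding finitely generated free summand lands in a finitely generated free summand of the target); this reduction is in Strebel's paper and should be invoked explicitly, since projective $RA$-modules are not in general induced from the subgroups $A_\lambda$.
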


The fact that PTFA groups are amenable and in~$D(\Z_p)$ can be derived
as a consequence of Lemma~\ref{lemma:poly-mixed-groups}.  In fact, a
PTFA group is in $D(R)$ for any~$R$~\cite{Strebel:1974-1}.


For a discrete countable group $G$, we denote by $\N G$ the
\emph{group von Neumann algebra} of~$G$ (e.g.,
see~\cite[Chapter~1]{Lueck:2002-1}).  $\C G$ is a subalgebra of $\N
G$, and $\N G$ is endowed with an involution $a\to a^*$ which is
induced by the inversion $g\to g^{-1}$ in~$G$.  For any module $A$
over $\N G$, the \emph{$L^2$-dimension} $\ldim A \in [0,\infty]$ is
defined as in, e.g., \cite[Chapter~6]{Lueck:2002-1}.  (For more
discussions on the $L^2$-dimension, see
Section~\ref{subsection:L2-dimension}.)

The following result due to Cha-Orr is a key ingredient which controls
the $L^2$-dimension of $L^2$-coefficient homology:

\begin{theorem}[{\cite[Theorem~6.6]{Cha-Orr:2009-01}}]
  \label{theorem:cha-orr-local-property-NG}
  Suppose $G$ is an amenable group in $D(R)$, and $C_*$ is a finitely
  generated free chain complex over~$\Z G$.  If $H_i(R \otimes_{\Z G}
  C_*)=0$ for $i\le n$, then $\ldim H_*(\N G\otimes_{\Z G} C_*)=0$ for
  $i\le n$.
\end{theorem}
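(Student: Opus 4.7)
The plan is to combine two structural inputs: L\"uck's dimension-flatness theorem, which asserts that $\ldim \operatorname{Tor}^{\Z G}_p(\N G, M) = 0$ for all $p \ge 1$ and every $\Z G$-module $M$ when $G$ is amenable, and Strebel's property $D(R)$, which detects injectivity of $RG$-homomorphisms between projectives via their reduction to $R$-coefficients. First I would use the dimension-flatness to reduce the problem to a statement about $H_*(C_*)$ as a $\Z G$-module. Feeding L\"uck's vanishing into the hyper-Tor spectral sequence
\[
E^2_{p,q} = \operatorname{Tor}^{\Z G}_p(\N G, H_q(C_*)) \Longrightarrow H_{p+q}(\N G \otimes_{\Z G} C_*)
\]
forces $\ldim E^2_{p,q} = 0$ for $p \ge 1$, and additivity of $\ldim$ on short exact sequences collapses this to $\ldim H_i(\N G \otimes_{\Z G} C_*) = \ldim(\N G \otimes_{\Z G} H_i(C_*))$. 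Hence it suffices to prove $\ldim(\N G \otimes_{\Z G} H_i(C_*)) = 0$ for $i \le n$.

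Next I would bring in $D(R)$ through the auxiliary complex $D_* = RG \otimes_{\Z G} C_*$, which is finitely generated free over $RG$ with $H_i(R \otimes_{RG} D_*) = 0$ for $i \le n$. By induction on $i \le n$, I would show that $D_*$ is chain-equivalent over $RG$ to a complex which is trivial in degrees $\le n$: at each step the $R$-level acyclicity provides a candidate splitting in which one cancels an elementary summand of the form $RG \xrightarrow{\cong} RG$, and $D(R)$ is used to certify that the resulting inclusion of projective $RG$-modules is genuinely injective, so the splitting lifts from $R$-coefficients to $RG$-coefficients. The conclusion of this step is that $RG \otimes_{\Z G} H_i(C_*) = H_i(D_*) = 0$ for $i \le n$. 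A further application of L\"uck's theorem for amenable groups then closes the argument: a finitely presented $\Z G$-module $M$ with $RG \otimes_{\Z G} M = 0$ admits a presentation whose defining matrix becomes surjective upon augmentation to $R$, and for any such $M$ one has $\ldim(\N G \otimes_{\Z G} M) = 0$; applying this to $M = H_i(C_*)$ completes the proof.

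The main obstacle I anticipate is the inductive splitting at the $RG$-level. One must verify that cancelling an elementary summand preserves both the finitely-generated-free structure of the residual complex and the $R$-acyclic vanishing in low degrees, so that the induction can continue cleanly. A more delicate point is that the cancellations are performed over $RG$ rather than $\Z G$, so some care is needed to ensure that the resulting vanishing of $H_i(D_*)$ propagates back to a presentation of $H_i(C_*)$ over $\Z G$ suitable for L\"uck's theorem. I expect this bookkeeping, together with the precise invocation of $D(R)$ to guarantee injectivity of the relevant maps of projectives, to be the technically subtle part of the argument.
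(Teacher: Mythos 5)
Your overall frame---L\"uck's dimension-flatness for amenable groups plus Strebel's condition $D(R)$---identifies the right two inputs, and both your opening reduction (the universal coefficient spectral sequence argument showing $\ldim H_i(\N G\otimes_{\Z G}C_*)=\ldim(\N G\otimes_{\Z G}H_i(C_*))$) and your closing lemma are sound in themselves. The fatal step is the middle one. You claim that $D(R)$ allows you to cancel elementary summands of $D_*=RG\otimes_{\Z G}C_*$ until $D_*$ is chain equivalent over $RG$ to a complex trivial in degrees $\le n$, hence that $H_i(D_*)=0$ for $i\le n$. This is false. Take $G=\Z=\langle t\rangle$, $R=\Q$, and $C_*\colon 0\to \Z G\xrightarrow{t-2}\Z G\to 0$. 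Then $G$ is amenable and lies in $D(\Q)$, and $H_i(\Q\otimes_{\Z G}C_*)=0$ for all $i$ because $t-2$ augments to $-1$; yet $H_0(\Q G\otimes_{\Z G}C_*)=\Q[t^{\pm1}]/(t-2)\ne 0$, so no trivialization over $RG$ in low degrees can exist. The reason the cancellation cannot be performed is that $D(R)$ certifies only \emph{injectivity} of maps of projective $RG$-modules from their $R$-reductions; splitting off an elementary summand $RG\xrightarrow{\cong}RG$ requires a split \emph{surjection} onto a free summand, and surjectivity does not lift from $R$ to $RG$ (in the example $1_R\otimes\partial$ is an isomorphism while $\partial$ is not onto). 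The theorem is nevertheless true in this example only because $t-2$ becomes invertible in $\N\Z$, i.e.\ the nonzero $RG$-homology has $L^2$-dimension zero after passing to $\N G$---which shows that any correct proof must work at the level of $L^2$-dimensions rather than by killing homology over $RG$. The same example undercuts your final application: the hypothesis you feed it, $RG\otimes_{\Z G}H_i(C_*)=0$, is neither established nor true in general (and for $R=\Z_p$ one does not even have $H_i(D_*)\cong RG\otimes_{\Z G}H_i(C_*)$, since $\Z_p$ is not flat over $\Z$).

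The correct deployment of $D(R)$---in Cha--Orr, and in the generalization this paper proves as Theorem~\ref{theorem:L2-dim-estimate-of-NG-homology} via Lemmas \ref{lemma:NG-tensoring-ker-im-coker}, \ref{lemma:lemma:rank-of-image-D(R)} and \ref{lemma:L2-dim-of-im-and-ker}---is a rank count, not a trivialization: $D(R)$ bounds from below the rank of free $RG$-submodules of $\Im\partial$ by the $R$-dimension of $\Im\{1_R\otimes\partial\}$; amenability (L\"uck's vanishing of $\ldim\Tor_1$) converts such free submodules into the estimate $\dim_R\Im\{1_R\otimes\partial\}\le\ldim\Im\{1_{\N G}\otimes\partial\}$, with the reverse inequality for kernels; and a dimension count then gives $\ldim H_i(\N G\otimes_{\Z G}C_*)\le\dim_R H_i(R\otimes_{\Z G}C_*)$, which is the theorem. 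Your closing observation (a finitely presented $\Z G$-module whose presentation matrix becomes surjective over $R$ satisfies $\ldim(\N G\otimes M)=0$) is exactly a special case of this count; the gap is that nothing in your argument legitimately places $H_i(C_*)$ in that situation.
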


\subsection{Proof of
  Theorem~\ref{theorem:intro-obstruction-slice}}
\label{subsection:proof-of-theorem-slice-obstruction}

We briefly recall the definition of the von Neumann-Cheeger-Gromov
$\rho$-invariant of a closed $3$-manifold $M$, viewing it as an
$L^2$-signature defect.  Given $\phi\colon \pi_1(M) \to G$, by
replacing $G$ with a larger group into which $G$ injects, we may
assume that there is a compact $4$-manifold $W$ with $\partial W = M$
over~$G$, by an argument of Weinberger.  Let
\[
\lambda\colon H_2(W;\N G) \times H_2(W;\N G) \to \N G
\]
be the $\N G$-coefficient intersection form of~$W$.  This is a
hermitian form over~$\N G$, so that the $L^2$-signature $\lsign
\lambda\in \R$ is defined.  Now the \emph{$L^2$-signature} of $W$ is
defined by $\lsign_G(W) = \lsign \lambda$.  For more detailed
discussions on the $L^2$-signature, see
Section~\ref{subsubsection:L2-metabolizer}.  (See also, e.g.,
\cite{Cochran-Orr-Teichner:1999-1, Lueck-Schick:2003-1,
  Chang-Weinberger:2003-1, Harvey:2006-1, Cha:2006-1,
  Cha-Orr:2009-01}.)


The von Neumann-Cheeger-Gromov $\rho$-invariant of $(M,\phi)$ is
defined to be the signature defect
\[
\rhot(M,\phi) = \lsign_G(W)-\sign(W)
\]
where $\sign(W)$ is the ordinary signature of~$W$.  $\rhot(M,\phi)$~is
known to be well-defined.

We are now ready to prove
Theorem~\ref{theorem:intro-obstruction-slice}: \emph{ Suppose $K$ is a
  slice knot and $G$ is an amenable group lying in Strebel's class
  $D(R)$ for some $R$.  If $\phi\colon \pi_1M(K) \to G$ is a
  homomorphism extending to a slice disk exterior for $K$, then
  $\rhot(M(K),\phi)=0$.}

\begin{proof}[Proof of Theorem~\ref{theorem:intro-obstruction-slice}]
  Let $W$ be a slice disk exterior for $K$, and suppose the given
  homomorphism $\phi\colon \pi_1 M(K) \to G$ factors
  through~$\pi_1(W)$.  Then, we have
  $\rhot(M(K),\phi)=\lsign_G(W)-\sign(W)$.  By Alexander duality,
  $H_2(W;R)=0$ and $H_1(W;R)\cong H_1(M(K);R)$ and thus
  $H_i(W,M(K);R)=0$ for any $R$ and $i\le 2$.  By
  Theorem~\ref{theorem:cha-orr-local-property-NG}, it follows that
  $\ldim H_2(W,M(K);\N G)=0$.  Since the non-singular part of the $\N
  G$-valued intersection form of $W$ is supported by the image of
  $H_2(W;\N G) \to H_2(W,M(K);\N G)$, it follows that $\lsign_G(W)=0$.
  Since $H_2(W,M(K))=0$, the ordinary signature $\sign(W)$ vanishes.
  Therefore $\rhot(M(K),\phi)=0$.
\end{proof}

We remark that the above proof actually shows that the
$\rho$-invariant gives us an obstruction to bounding a slice disk in a
homology 4-ball.


\section{Obstructions to being $(n.5)$-solvable}
\label{section:obstruction-to-solvability}

In this section we prove our obstruction to being an $(n.5)$-solvable
knot.

\begin{definition}
  \label{definition:solvable-knots}
  Suppose $M$ is a closed 3-manifold and $W$ is a compact
  4-manifold with boundary $M$.  Let $\pi=\pi_1(W)$.  
  \begin{enumerate}
  \item $W$ is called an \emph{integral (n)-solution} for $M$ if the
    inclusion induces $H_1(M)\cong H_1(W)$ and there exist elements
    $x_1,\ldots,x_r, y_1,\ldots, y_r\in H_2(W;\Z[\pi/\pi^{(n)}])$]
    such that $2r=\rank_\Z H_2(W;\Z)$ and the intersection form
    \[
    \lambda_n\colon H_2(W;\Z[\pi/\pi^{(n)}])\times
    H_2(W;\Z[\pi/\pi^{(n)}]) \to \Z[\pi/\pi^{(n)}]
    \]
    satisfies $\lambda_n(x_i,x_j)=0$ and $\lambda_n(x_i,y_j)=\delta_{ij}$
    (Kronecker symbol) for any $i$,~$j$.
  \item $W$ is an \emph{integral $(n.5)$-solution} of $M$ if (1) is
    satisfied for some $x_i$ and $y_j$, and there exist lifts $\tilde
    x_i\in H_2(W;\Z[\pi/\pi^{(n+1)}])$ of the $x_i$ such that
    $\lambda_{n+1}(\tilde x_i,\tilde x_j)=0$ for any $i$,~$j$.
  \end{enumerate}
  We call the collections $\{x_i\}$ and $\{\tilde x_i\}$ an
  \emph{$(n)$-lagrangian} and \emph{$(n+1)$-lagrangian}, respectively,
  and call $\{y_i\}$ their \emph{$(n)$-dual}, respectively.  If the
  zero-surgery manifold $M(K)$ of a knot $K$ has an integral
  $(h)$-solution $W$, we say that $K$ is \emph{integrally
    $(h)$-solvable}.
\end{definition}

We remark that if a knot $K$ is $(h)$-solvable in the sense
of~\cite{Cochran-Orr-Teichner:1999-1}, then $K$ is integrally
$(h)$-solvable as in Definition~\ref{definition:solvable-knots}.


The main result of this section is the following:

\begin{theorem}
  \label{theorem:n.5-solvable-obstruction-amenable-in-D(R)}
  Suppose a knot $K$ is integrally $(n.5)$-solvable, $G$ is an
  amenable group lying in $D(R)$ for $R=\Z_p$ or $\Q$,
  $G^{(n+1)}=\{e\}$, and $\phi\colon \pi_1 M(K) \to G$ is a
  homomorphism which extends to an integral $(n.5)$-solution for
  $M(K)$ and sends a meridian to an infinite order element in~$G$.
  Then $\rhot(M(K),\phi)=0$.
\end{theorem}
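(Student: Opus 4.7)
The plan is to adapt the Cochran-Orr-Teichner proof of Theorem~\ref{theorem:COT-obstruction}, replacing their Ore skew-field dimension counts by $L^2$-dimension bounds from Theorem~\ref{theorem:cha-orr-local-property-NG}. Choose an integral $(n.5)$-solution $W$ over which $\phi$ extends, and set $\Lambda:=\Z[\pi_1(W)/\pi_1(W)^{(n+1)}]$. Since $G^{(n+1)}=\{e\}$, $\phi$ factors through $\Lambda$, and the $(n+1)$-lagrangian $\{\tilde x_i\}\subset H_2(W;\Lambda)$ pushes forward to classes $\{\bar x_i\}\subset H_2(W;\N G)$ satisfying $\lambda_{\N G}(\bar x_i,\bar x_j)=0$. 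Since $\phi$ extends to $W$, we have $\rhot(M(K),\phi)=\lsign_G(W)-\sign(W)$, and it suffices to show both summands vanish. The ordinary signature vanishes because the augmented images of the $(n)$-lagrangian and its $(n)$-dual give $2r$ classes spanning a full-rank sublattice of $H_2(W;\Z)\cong\Z^{2r}$ whose intersection matrix has the block form $\bigl(\begin{smallmatrix}0&I_r\\I_r&*\end{smallmatrix}\bigr)$, which is rationally hyperbolic.

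To show $\lsign_G(W)=0$, the strategy is to exhibit $\{\bar x_i\}$ as an $L^2$-lagrangian for the $\N G$-valued intersection form on $H_2(W;\N G)$. The first technical step is to establish $\ldim H_i(M(K);\N G)=0$ for every $i$. The infinite-order meridian hypothesis, combined with a suitable CW-model of $M(K)$ built from the knot exterior plus a $2$-handle on the meridian, makes the relevant $\Z G$-chain complex $R$-acyclic after a free reduction supported on the meridian; Theorem~\ref{theorem:cha-orr-local-property-NG} then lifts this $R$-level vanishing to an $\N G$-level vanishing. Combined with the long exact sequence of $(W,M(K))$, Poincar\'e--Lefschetz duality, and Theorem~\ref{theorem:cha-orr-local-property-NG} applied to $H_i(W,M(K);R)$ (which vanishes for $i\le 1$ since $H_1(W)\cong H_1(M(K))=\Z$), this forces $\ldim H_i(W;\N G)=0$ for $i\ne 2$; the $L^2$-Euler identity $\chi^{(2)}(W)=\chi(W)=2r$ then pins down $\ldim H_2(W;\N G)=2r$.

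The next step is to show that the submodule $\bar{\mathcal{L}}\subset H_2(W;\N G)$ generated by $\{\bar x_i\}$ satisfies $\ldim\bar{\mathcal{L}}=r$. Chain-level representatives of the $\tilde x_i$ assemble into a $\Lambda$-linear map $\Lambda^r\to H_2(W;\Lambda)$ whose reduction to $R$-coefficients is injective: the $\bar x_i$ together with the $\bar y_j$ have intersection matrix of determinant $\pm 1$, so they are $R$-linearly independent in $H_2(W;R)$ for $R=\Q$ or $R=\Z_p$. Applying Theorem~\ref{theorem:cha-orr-local-property-NG} to the mapping cone of this representing map transports $R$-rank information to $L^2$-dimension information and yields $\ldim\bar{\mathcal{L}}=r=\tfrac{1}{2}\ldim H_2(W;\N G)$. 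Since $\bar{\mathcal{L}}$ is isotropic under $\lambda_{\N G}$, the standard $L^2$-annihilator argument (analogous to the proof of Theorem~\ref{theorem:COT-obstruction}) makes the non-singular part of the intersection form hyperbolic, so $\lsign_G(W)=0$.

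The hardest step will be the $L^2$-dimension count for $\bar{\mathcal{L}}$, where the PTFA skew-field argument is unavailable. The core of the plan is to reduce $L^2$-injectivity of the representing chain map to ordinary $R$-coefficient injectivity on an appropriate mapping-cone complex, where Theorem~\ref{theorem:cha-orr-local-property-NG} applies. The meridian-infinite-order hypothesis enters essentially twice --- to force $\ldim H_*(M(K);\N G)=0$, and to equate $\ldim H_2(W;\N G)$ with $\chi(W)$ --- and some additional care will be needed to handle both $R=\Z_p$ (capturing $p$-torsion in $G$) and $R=\Q$ uniformly, which is precisely where the extensions of results from \cite{Cha-Orr:2009-01} promised in this section must be developed.
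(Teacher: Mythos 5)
Your overall architecture coincides with the paper's: use the infinite-order meridian to kill $b^{(2)}_0$ and $b^{(2)}_1$ of $M(K)$ and of $W$ relative to the meridian circle (this is Lemma~\ref{lemma:vanishing-of-l2-H1}), use duality and the $L^2$-Euler characteristic to pin down $\ldim H_2(W;\N G)=b_2(W;R)=2r$ (Lemmas~\ref{lemma:dim-of-l2-H2} and~\ref{lemma:mod-p-betti-number-of-solution}), show the submodule $\bar{\mathcal{L}}$ generated by the $(n+1)$-lagrangian has $\ldim\ge r$, and conclude by the $L^2$-law-of-inertia argument (Proposition~\ref{proposition:l2-metabolizer}), the $L^2$-nonsingularity coming from $\ldim H_*(M(K);\N G)=0$. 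The gap is in the pivotal step, the lower bound $\ldim\bar{\mathcal{L}}\ge r$, where your proposed mechanism --- ``apply Theorem~\ref{theorem:cha-orr-local-property-NG} to the mapping cone of the representing map'' --- does not work. That theorem converts $R$-acyclicity in all degrees $\le n$ into vanishing of $\ldim$ in the same range; it is a one-way vanishing statement. The mapping cone of $(\Z G)^r[2]\to C_*(W;\Z G)$, even after also coning off the meridian circle to kill $H_0$ and $H_1$, has $H_2(-;R)\cong H_2(W;R)/\overline{H}$ of dimension $r\ne 0$, so the hypothesis of Theorem~\ref{theorem:cha-orr-local-property-NG} fails precisely in the degree you need and the theorem yields nothing about degrees $2$ and $3$. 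More fundamentally, no acyclicity statement can certify that the particular $r$ classes $\bar x_i$ stay ``$L^2$-independent'': if you instead cone off all $2r$ classes $\{\tilde x_i, y_j\}$ (which does make $H_i(-;R)=0$ for $i\le 2$), you only learn that the full collection generates $H_2(W;\N G)$ up to $\ldim$-zero, which is perfectly consistent with $\ldim\bar{\mathcal{L}}<r$ and $\ldim$ of the span of the $y_j$ being large.

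What is actually needed is a quantitative transfer from $R$-rank to $L^2$-dimension: if a $\Z G$-map into a projective module has image whose $R$-reduction contains an $R$-free module of rank $d$, then the $\N G$-image has $\ldim\ge d$. This is where Strebel's condition enters in an essentially new way --- one lifts an $R$-free submodule of the image to an $RG$-free submodule of the image (Lemma~\ref{lemma:lemma:rank-of-image-D(R)}) --- and amenability enters through L\"uck's vanishing $\ldim\Tor_1^{\C G}(\N G,-)=0$ (Lemma~\ref{lemma:NG-tensoring-ker-im-coker}); these combine into Lemma~\ref{lemma:L2-dim-of-im-and-ker} and then the codimension estimate of Theorem~\ref{theorem:L2-dim-estimate-of-NG-homology}(2),
\[
\ldim H_2(\N G\otimes C_*)-\ldim H \;\le\; \dim_R H_2(R\otimes C_*)-\dim_R\overline{H},
\]
which is exactly what gives $\ldim H\ge 2r-2r+r=r$ once the $(n)$-dual shows $\dim_R\overline{H}=r$. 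Your closing remark that extensions of \cite{Cha-Orr:2009-01} ``must be developed'' points in the right direction, but you locate the difficulty in the $\Q$ versus $\Z_p$ bookkeeping (which is the easy part, Lemma~\ref{lemma:mod-p-betti-number-of-solution}), while the concrete reduction you propose for the hard part would fail; the new dimension-comparison lemmas above are the missing content, and they cannot be extracted from Theorem~\ref{theorem:cha-orr-local-property-NG} by any mapping-cone trick.
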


We note that the meridian condition holds, for example, when the
abelianization map of $\pi_1(M_K)$ factors through~$\phi$.
Theorem~\ref{theorem:intro-obstruction-n.5-solvable} in the
introduction is an immediate consequence of
Theorem~\ref{theorem:n.5-solvable-obstruction-amenable-in-D(R)}.

Before giving a proof of
Theorem~\ref{theorem:n.5-solvable-obstruction-amenable-in-D(R)}, we
discuss some basic preliminaries and establish further results on the
$L^2$-homology for amenable groups in $D(R)$, in
Sections~\ref{subsection:L2-dimension}
and~\ref{subsection:more-on-L2-homology-and-D(R)}.

\subsection{$L^2$-dimension, homology, and hermitian forms}
\label{subsection:L2-dimension}

In this subsection we discuss rapidly necessary facts on $L^2$-theory,
under the theme that the group von Neumann algebra $\N G$ behaves as a
semisimple ring to the eyes of the $L^2$-dimension.  Particularly,
several standard properties of the ordinary complex dimension of
vector spaces, dual spaces, homology are shared by the
$L^2$-dimension.

We start by reviewing the definition of the group von Neumann algebra
$\N G$ and $L^2$-dimension of modules over~$\N G$.

\subsubsection*{Definitions}

For a discrete countable group $G$, let $\lt G$ be the Hilbert space
of formal expressions $\sum_{g\in G} z_g\cdot g$ ($z_g\in \C$) which
are square summable, i.e., $\sum_{g\in G} |z_g|^2 < \infty$.  The
inner product on $\lt G$ is given by $\big\langle \sum z_g\cdot g,
\sum w_g\cdot g \big\rangle =\sum \overline{z_g} w_g$.  Let
$\mathcal{B}(\lt G)$ be the algebra of bounded operators on~$\lt G$.
(As a convention, operators act on the left.)  We can view $\C G$ as a
subalgebra of $\mathcal{B}(\lt G)$ via multiplication on the left.
The \emph{group von Neumann algebra} $\N G$ of $G$ is defined to be
the pointwise closure of $\C G$ in $\mathcal{B}(\lt G)$.  The
\emph{von Neumann trace} of $a\in \N G$ is defined by $\ltr a =
\langle a(e), e\rangle$ where $e\in G$ is the identity.  This extends
to $n\times n$ matrices $B=(b_{ij})$ over $\N G$ by $\ltr B = \sum_i
\ltr b_{ii}$.

Now we can describe the $L^2$-dimension function
\[
\ldim\colon \{ \text{all $\N G$-modules} \} \to [0,\infty]
\]
as follows.  For a finitely generated projective module $P$ over $\N
G$, there is a projection $(\N G)^n \to (\N G)^n$ whose image is
isomorphic to~$P$, that is, the associated $n\times n$ matrix, say
$B=(b_{ij})$, has row space~$P$.  Then $\ldim P$ is defined to be
$\ltr B$.  It is known that $\ldim P$ is well-defined and that one can
choose a hermitian (i.e., $B^*=(b^*_{ji})$ equals~$B$) projection
matrix $B$ with row space~$P$.  (See \cite[Chapter~6]{Lueck:2002-1}.)
For an arbitrary module $A$ over $\N G$, define $\ldim A=\sup_P \ldim
P$ where $P$ runs over finitely generated projective submodules
in~$A$.

We will frequently use the following basic properties of the
$\Lt$-dimension function.  For proofs, see
\cite[Chapter~6]{Lueck:2002-1}.

\begin{enumerate}
\item $\ldim \N G=1$ and $\ldim 0 = 0$.
\item If $0\to A'\to A \to A'' \to 0$ is exact, then $\ldim A = \ldim
  A' + \ldim A''$.  In particular, if $A'$ is a homomorphic image of a
  submodule of $A$, then $\ldim A' \le \ldim A$.
\end{enumerate}

Here we adopt the usual convention $\infty+d=\infty$ for any $d\in
[0,\infty]$.  A consequence is that $\ldim M<\infty$ if $M$ is
a quotient of a submodule of a finitely generated $\N G$-module.

The next lemma is a key result due to L\"uck, which says that a
finitely generated module over $\N G$ can be ``approximated'' by a
projective summand with the same $L^2$-dimension.

\begin{lemma}[{\cite[p.\ 239]{Lueck:2002-1}}]
  \label{lemma:NG-projective-part}
  For a finitely generated $\N G$-module $A$, let
  \[
  T(A)=\{x\in A \mid f(x)=0 \text{ for any }f\colon A\rightarrow \N
  G\}
  \]
  and $P(A)=A/T(A)$.  Then $\ldim T(A)=0$ and $P(A)$ is finitely
  generated and projective over $\N G$.  Consequently, $A\cong
  T(A)\oplus P(A)$ and $\ldim A=\ldim P(A)$.
\end{lemma}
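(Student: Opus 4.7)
My plan is to exploit the finite von Neumann algebra structure of $\N G$, in particular the availability of polar decomposition, in order to decompose any finitely generated $\N G$-module into a projective ``essential part'' and a dimension-zero remainder, and then identify these pieces with $P(A)$ and $T(A)$. Concretely, I would begin by choosing a finite presentation $(\N G)^m \xrightarrow{\phi} (\N G)^n \to A \to 0$, and apply polar decomposition to $\phi$ viewed as a bounded $\N G$-linear operator between the associated $\ell^2$-completions: write $\phi = u\cdot |\phi|$ with $u$ a partial isometry and $|\phi| = (\phi^*\phi)^{1/2}$. Set $\overline{I} := \operatorname{im}(u) \subset (\N G)^n$; since $\overline{I}$ is the range of the self-adjoint projection $uu^*$ in $M_n(\N G)$, it is a direct summand of $(\N G)^n$ with finitely generated projective complement $Q := (\N G)^n/\overline{I}$. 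Because $\operatorname{im}(\phi) \subset \overline{I}$, I get a short exact sequence
\[
0 \longrightarrow \overline{I}/\operatorname{im}(\phi) \longrightarrow A \longrightarrow Q \longrightarrow 0.
\]

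Next I would verify $\ldim \bigl(\overline{I}/\operatorname{im}(\phi)\bigr) = 0$. By additivity of $\ldim$ this reduces to $\ldim \overline{I} = \ldim \operatorname{im}(\phi)$. Polar decomposition gives an $\N G$-isomorphism from the ``initial space'' of $u$ onto $\overline{I}$, so it suffices to compare $\ldim \operatorname{im}(\phi)$ with the dimension of the support projection of $|\phi|$. This is the genuinely analytic input: one applies the spectral theorem to the self-adjoint $|\phi|$ and uses a functional-calculus approximation (e.g., replace $|\phi|$ by $|\phi| + \epsilon p$ for $p$ its support projection, which is invertible on its range, and let $\epsilon \to 0$) to conclude the dimensions agree.

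Finally, I would identify the constructed sequence with the one in the lemma. Elements of $A$ represented by $\overline{I}$ are annihilated by every $\N G$-linear map $A \to \N G$: such a map lifts to a functional $(\N G)^n \to \N G$ vanishing on $\operatorname{im}(\phi)$, which then automatically vanishes on $\overline{I} = \operatorname{im}(uu^*)$. Conversely, if $a \in A$ projects to a nonzero element of $Q$, then projectivity of $Q$ splits the quotient $(\N G)^n \to Q$, exhibiting $Q$ as a direct summand of $(\N G)^n$; at least one coordinate functional then separates the image of $a$ from zero, giving a homomorphism $A \to \N G$ not annihilating $a$. This shows $T(A) = \overline{I}/\operatorname{im}(\phi)$ and $P(A) = Q$. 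Projectivity of $P(A)$ splits the sequence, yielding $A \cong T(A) \oplus P(A)$ and $\ldim A = \ldim P(A)$ by additivity. The main obstacle is the dimension-zero step for $\overline{I}/\operatorname{im}(\phi)$: this is where one must truly exploit the operator-algebraic structure of $\N G$ (polar decomposition together with spectral calculus), rather than purely algebraic properties such as semihereditariness, to pass from the algebraic image $\operatorname{im}(\phi)$ to its analytic closure $\overline{I}$ without changing the $L^2$-dimension.
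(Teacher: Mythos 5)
The paper itself gives no proof of this lemma: it is quoted from p.~239 of L\"uck's book, where it is part of Theorem~6.7 and is proved by exactly the kind of operator-algebraic argument you outline, so your route is the intended one rather than a new one. For \emph{finitely presented} modules your outline is essentially right: $\overline I=\Im(uu^*)$ is a direct summand, $Q\cong\Im(1-uu^*)$ is finitely generated projective, and the identification $T(A)=\overline I/\Im\phi$, $P(A)=Q$ goes through (when you say a functional vanishing on $\Im\phi$ ``automatically'' vanishes on $\overline I$, the reason to record is that a homomorphism $(\N G)^n\to\N G$ is given by a matrix, hence by a bounded operator, and $g\phi=0$ forces it to kill the Hilbert-space closure of the range of $\phi$, which is the range of $uu^*$). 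The genuine gap is your first line: you \emph{assume} a finite presentation. The lemma is asserted for finitely generated modules, and $\N G$ is not Noetherian for infinite $G$ (only semihereditary), so the kernel $K$ of $(\N G)^n\to A$ need not be finitely generated and $A$ need not be finitely presented; for instance a strictly increasing sequence of projections $p_1<p_2<\cdots$ yields a non-finitely-generated ideal $\bigcup_k \N G\,p_k$ and hence a cyclic module that is not finitely presented. Since your construction hinges on a single matrix $\phi$ whose image is all of $K$, as written it proves a strictly weaker statement than the one the paper uses (the lemma is later applied to arbitrary finitely generated modules, e.g.\ in the dual and Ext/Tor lemmas). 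Closing the gap needs an extra idea: write $K=\bigcup_\alpha K_\alpha$ as a directed union of finitely generated submodules $K_\alpha=\Im\phi_\alpha$, take the join $pr=\bigvee_\alpha pr_\alpha$ of the associated range projections in $M_n(\N G)$, use normality of the trace to get $\ltr(pr)=\sup_\alpha\ltr(pr_\alpha)$, and rerun your annihilator and dimension-zero computations for $\Im(pr)/K$; equivalently, follow L\"uck and work with the closure of $K$ defined by annihilators, never choosing a presentation of $K$.

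A second, smaller soft spot is the analytic step you flag yourself: the perturbation $|\phi|+\epsilon p$ does not do what you want, because $\Im(|\phi|+\epsilon p)=\Im(p)$ for \emph{every} $\epsilon>0$, and letting $\epsilon\to 0$ gives no information about $\Im(|\phi|)$ ($\ldim$ of algebraic images is not continuous under norm limits). The standard fix is via spectral projections: with $q_\epsilon=\chi_{[\epsilon,\infty)}(|\phi|)$ and the bounded Borel function $g_\epsilon(t)=t^{-1}\chi_{[\epsilon,\infty)}(t)$ one has $q_\epsilon=|\phi|\,g_\epsilon(|\phi|)$, so $\Im(q_\epsilon)\subset\Im(|\phi|)$ already at the module level, whence $\ldim\Im(|\phi|)\ge\ltr(q_\epsilon)\to\ltr(u^*u)$ by normality of the trace; combined with injectivity of $u$ on $\Im(u^*u)$ this gives $\ldim\Im\phi=\ldim\overline I$ as desired. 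With these two repairs your proof is correct and coincides with the argument in the cited source.
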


We remark that $T(-)$ and $P(-)$ are functorial.  From
Lemma~\ref{lemma:NG-projective-part}, it follows easily that a
finitely generated $\N G$-module $A$ is projective if and only if
$T(A)=0$.

\subsubsection*{$L^2$-dimension of dual modules}

For an $\N G$-module $A$, its \emph{dual} $A^*=\Hom_{\N G}(A,\N G)$ is
an $\N G$-module under the scalar multiplication $(r\cdot f)(x)=f(x)
r^*$ for $r\in \N G$, $f\in A^*$, $x\in A$.

%

\begin{lemma}
  \label{lemma:l2-dim-dual}
  For any finitely generated $\N G$-module $A$, $T(A)^*=0=T(A^*)$,
  $P(A)^*=A^*$, and $\ldim A^* = \ldim A$.
\end{lemma}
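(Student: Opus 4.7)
The plan is to use the splitting $A \cong T(A) \oplus P(A)$ provided by Lemma~\ref{lemma:NG-projective-part}, together with the standard fact that the dual of a finitely generated projective $\N G$-module is again finitely generated projective with the same $L^2$-dimension. All four assertions should fall out after organizing this.

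First, to see $T(A)^{*}=0$: any $\N G$-homomorphism $g\colon T(A)\to \N G$ extends to $\tilde g\in A^{*}$ by setting $\tilde g|_{P(A)}=0$, and then the very definition of $T(A)$ forces $\tilde g$ (and hence $g$) to vanish. Applying $\Hom_{\N G}(-,\N G)$ to the splitting then yields $A^{*}\cong T(A)^{*}\oplus P(A)^{*}\cong P(A)^{*}$, which is the second assertion. For the third assertion, observe that since $P(A)$ is a direct summand of some $(\N G)^{m}$, its dual $P(A)^{*}\cong A^{*}$ is a direct summand of $((\N G)^{m})^{*}\cong (\N G)^{m}$, where the identification uses the involution $a\mapsto a^{*}$ to convert the natural right $\N G$-module structure on $\Hom_{\N G}(-,\N G)$ into the stated left structure $(r\cdot f)(x)=f(x)r^{*}$. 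Thus $A^{*}$ itself is finitely generated projective, so $T(A^{*})=0$ by the characterization of projectivity following Lemma~\ref{lemma:NG-projective-part}.

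For the dimension equality, I would invoke the hermitian projection representation. Lemma~\ref{lemma:NG-projective-part} lets us choose a hermitian projection matrix $B\in M_{m}(\N G)$ with image $P(A)$, so $\ldim A=\ldim P(A)=\ltr B$. Dualizing the internal splitting $(\N G)^{m}\cong P(A)\oplus (1-B)(\N G)^{m}$ and using the identification $((\N G)^{m})^{*}\cong (\N G)^{m}$ above, $P(A)^{*}$ is realized as the image of the dual projection; under the involution convention this dual is the conjugate transpose of $B$, and hermitian symmetry $B^{*}=B$ means $P(A)^{*}$ is represented by the same matrix $B$. Hence $\ldim A^{*}=\ldim P(A)^{*}=\ltr B=\ldim A$.

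The one point I expect to require care is the involution bookkeeping: one must check that the canonical identification $((\N G)^{m})^{*}\cong (\N G)^{m}$ is indeed a left $\N G$-module isomorphism with respect to the paper's convention $(r\cdot f)(x)=f(x)r^{*}$, and that under this identification the dual of a matrix $B$ acting by a specified multiplication convention is the conjugate-transpose $(b_{ji}^{*})$. Once this is pinned down cleanly, the rest of the argument is a formal consequence of the $T(A)\oplus P(A)$ decomposition and the basic properties of $\ldim$ and $\ltr$ recalled in Section~\ref{subsection:L2-dimension}.
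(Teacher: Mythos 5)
Your proposal is correct and follows essentially the same route as the paper: split $A\cong T(A)\oplus P(A)$, show $T(A)^*=0$ by extending functionals over the summand, identify $A^*\cong P(A)^*$ (projective, so $T(A^*)=0$), and compute $\ldim A^*=\ldim P(A)^*$ via a hermitian projection matrix $B$ representing $P(A)$, whose dual is represented by $B^*=B$ so the traces agree. The only (harmless) difference is that you spell out the involution bookkeeping for $((\N G)^m)^*\cong(\N G)^m$, which the paper dispatches with the dual-basis remark, and the existence of a hermitian projection matrix comes from the general discussion of $\ldim$ (L\"uck) rather than from Lemma~\ref{lemma:NG-projective-part} itself.
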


\begin{proof}
  First we show $T(A)^*=0$.  Since $T(A)$ is a summand, any $f\colon
  T(A) \to \N G$ extends to $A$.  Therefore, $f(x)=0$ for any $x\in
  T(A)$ by definition.

  Write the given module as $A=T(A)\oplus P(A)$.  We have
  $A^*=T(A)^*\oplus P(A)^* = P(A)^*$.  Since $P(A)^*$ is projective,
  $T(A^*)=0$.  Now it suffices to show that $\ldim P^* = \ldim P$ for
  a finitely generated projective module~$P$.  Choose an $n\times n$
  hermitian projection matrix $B$ with row space $P$.  Applying
  $\Hom(-,\N G)$ to the associated map $(\N G)^n \to (\N G)^n$ and
  considering the dual basis, it is easily seen that the row space of
  $B^*$ is isomorphic to~$P^*$.  Therefore $\ldim P^* = \ltr B^* =
  \ltr B = \ldim P$.
  %
\end{proof}

\subsubsection*{$L^2$-dimension of homology and cohomology}



Recall that $\Ext_{\N G}^i(-,\N G)$ and
$\Tor^{\N G}_i(\N G,-)$ are (left) $\N G$-modules viewing $\N G$ as a
bimodule.

\begin{lemma}
  \label{lemma:l2-dim-ext-tor-with-NG}
  For any finitely generated module $A$ over $\N G$, $\ldim \Ext^1_{\N
    G}(A,\N G) = 0 = \ldim \Tor^{\N G}_1(\N G, A)$.
\end{lemma}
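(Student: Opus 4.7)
The $\Tor$ statement is essentially trivial: $\N G$ is free of rank one as a module over itself, hence flat, so $\Tor^{\N G}_1(\N G, A) = 0$ for every $\N G$-module $A$, and in particular has $L^2$-dimension zero. I will therefore concentrate on the $\Ext$ statement.

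First I would use Lemma~\ref{lemma:NG-projective-part} to decompose $A \cong T(A) \oplus P(A)$. Since $P(A)$ is finitely generated projective, $\Ext^1_{\N G}(P(A), \N G) = 0$, so
\[
\Ext^1_{\N G}(A, \N G) \cong \Ext^1_{\N G}(T(A), \N G),
\]
reducing the problem to the case where the module in question, call it $A$ by abuse, has $\ldim A = 0$. I would then choose a surjection $F \twoheadrightarrow A$ from a finitely generated free $\N G$-module $F$, with kernel $N \subset F$. Additivity of $\ldim$ on $0 \to N \to F \to A \to 0$ gives $\ldim N = \ldim F$. Applying $\Hom_{\N G}(-, \N G)$, and using $A^*=0$ (from $\ldim A = 0$ and Lemma~\ref{lemma:l2-dim-dual}) together with $\Ext^1_{\N G}(F, \N G) = 0$, yields
\[
0 \to F^* \to N^* \to \Ext^1_{\N G}(A, \N G) \to 0.
\]
Combined with $\ldim F^* = \ldim F$ from Lemma~\ref{lemma:l2-dim-dual}, additivity now reduces the claim to the single identity $\ldim N^* = \ldim N$.

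The main obstacle is precisely this identity: $N$ is a submodule of a finitely generated free $\N G$-module but need not itself be finitely generated, so Lemma~\ref{lemma:l2-dim-dual} does not apply verbatim. I would handle this by one of two standard devices. The first is to invoke L\"uck's extension of $\ldim$ to arbitrary $\N G$-modules as the supremum of $\ldim$ on finitely generated submodules, continuous under directed unions, and to verify the invariance $\ldim M^* = \ldim M$ in this generality by approximating $N$ by its directed system of finitely generated submodules together with dualization. The second, perhaps cleaner route is to pass to the ring $\U G$ of operators affiliated to $\N G$: this ring is flat over $\N G$ and Von Neumann regular, so every $\U G$-module is flat, and combining flatness of $\U G \otimes_{\N G} -$ with the fact that $A$ is finitely presented (via the semi-hereditariness of $\N G$ that forces $N$ itself to be projective) gives $\U G \otimes_{\N G} \Ext^1_{\N G}(A, \N G) \cong \Ext^1_{\U G}(\U G \otimes_{\N G} A, \U G) = 0$. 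The compatibility $\ldim_{\N G}(M) = \ldim_{\U G}(\U G \otimes_{\N G} M)$ then forces $\ldim \Ext^1_{\N G}(A, \N G) = 0$ as required.
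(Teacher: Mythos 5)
Your skeleton is the same as the paper's: present $A$ by a finitely generated projective (the paper takes $0\to M\to P\to A\to 0$ directly, without first splitting off $P(A)$, but that difference is cosmetic), dualize to a four-term exact sequence, and use additivity of $\ldim$ together with Lemma~\ref{lemma:l2-dim-dual}; your remark that the $\Tor$ statement is formally trivial is fine. The entire content is thus the identity $\ldim N^*=\ldim N$ for the syzygy $N$, and you are right that this is the delicate point, since $N$ need not be finitely generated (the paper applies Lemma~\ref{lemma:l2-dim-dual} to it without comment). The problem is that neither of the two devices you offer actually establishes it, so as written the crucial step is missing.

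Route (b) is internally inconsistent: semihereditariness of $\N G$ makes only \emph{finitely generated} submodules of projectives projective, so it neither forces $N$ to be projective nor $A$ to be finitely presented; for $G=\Z$ the ring $\N\Z\cong L^\infty(S^1)$ has left ideals that are not finitely generated, so cyclic modules need not be finitely presented---exactly the obstacle you flagged a paragraph earlier. Without a finite presentation of $A$, the base-change isomorphism $\U G\otimes_{\N G}\Ext^1_{\N G}(A,\N G)\cong\Ext^1_{\U G}(\U G\otimes_{\N G}A,\U G)$ is not available, so the vanishing of the target gives nothing. Route (a) is a promissory note, and the blanket invariance $\ldim M^*=\ldim M$ it appeals to is false for general modules: for $G=\Z$, the module $M=\U\Z$ of affiliated operators (a.e.\ finite measurable functions) has $\ldim M=1$ but $M^*=\Hom_{\N\Z}(M,\N\Z)=0$. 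Moreover the approximation you sketch does not work as stated, because dualizing turns the directed union $N=\bigcup_i N_i$ into an inverse limit of the $N_i^*$, and $\ldim$ is not continuous under inverse limits. The identity you need is true for $N$ a submodule of a finitely generated free module $F$---one inequality comes from the injectivity of the restriction map from the dual of the closure of $N$ in $F$, the other by embedding $N^*$ into $\Hom_{\N G}(N,\U G)\cong\Hom_{\U G}(\U G\otimes_{\N G}N,\U G)$ and computing dimensions over the affiliated-operator ring---but your proposal does not carry this out, so there is a genuine gap at the one point where the argument goes beyond Lemma~\ref{lemma:l2-dim-dual}.
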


\begin{proof}
  Choose $0\to M \to P \to A \to 0$ with $P$ finitely generated
  projective.  Since
  \[
  0\to A^* \to P^* \to M^* \to \Ext_{\N G}^1 (A, \N G) \to 0
  \]
  we have, by Lemma~\ref{lemma:l2-dim-dual}, 
  \begin{align*}
    \ldim \Ext_{\N G}^1 (A, \N G) & = \ldim M^* - \ldim P^* + \ldim
    A^* \\
    & = \ldim M - \ldim P + \ldim A = 0.
  \end{align*}
%
%
%
%
%
%
%
  Similar argument works for $\Tor_1^{\N G}$.
\end{proof}

Following \cite{Cha-Orr:2009-01}, we say that a homomorphism $f\colon
A\to B$ between $\N G$-modules is an \emph{$L^2$-equivalence} if
$\ldim \Ker f = 0 = \ldim \Coker f$.

\begin{proposition}
  \label{proposition:l2-dim-homology-cohomology}
  Suppose $C_*$ is a chain complex with each $C_i$ finitely generated
  and projective over~$\N G$.  Then for any $n$, the Kronecker evaluation map
  \[
  H^n(\Hom(C_*,\N G)) \to \Hom(H_n(C_*),\N G)
  \]
  is an $L^2$-equivalence.  Consequently, $\ldim H_n(C_*) = \ldim
  H^n(\Hom(C_*,\N G))$.
\end{proposition}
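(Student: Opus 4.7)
My plan is to identify the kernel and cokernel of the Kronecker evaluation map with subquotients of $\Ext^1_{\N G}(-,\N G)$-groups on finitely generated modules, and then invoke Lemma~\ref{lemma:l2-dim-ext-tor-with-NG} to see that they have $L^2$-dimension zero. The second (dimension) assertion will then follow immediately from Lemma~\ref{lemma:l2-dim-dual}, applied to the finitely generated module $H_n(C_*)$.

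Write $Z_i = \ker d_i$ and $B_i = \Im d_{i+1}$, so $H_i(C_*) = Z_i/B_i$, and note that all of these are finitely generated over $\N G$ as subquotients of the $C_i$. A cochain $\phi \in \Hom(C_n,\N G)$ is a cocycle precisely when it factors through $C_n/B_n$, giving a canonical identification of the cocycle module with $\Hom(C_n/B_n,\N G)$. Under this identification the Kronecker map at the cocycle level becomes restriction to $H_n(C_*) \subset C_n/B_n$, arising from the short exact sequence
\[
0 \to H_n(C_*) \to C_n/B_n \to B_{n-1} \to 0
\]
(where $C_n/Z_n \cong B_{n-1}$). Applying $\Hom(-,\N G)$ to this sequence, the cokernel of the cocycle-level restriction embeds in $\Ext^1_{\N G}(B_{n-1},\N G)$, which has $L^2$-dimension zero; since passing to cohomology does not alter this cokernel, the Kronecker map itself has cokernel of $L^2$-dimension zero.

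For the kernel, the $n$-coboundaries may be identified with the image of the restriction map $\Hom(C_{n-1},\N G) \to \Hom(B_{n-1},\N G)$, viewed as a submodule of $\Hom(C_n/B_n,\N G)$ via the quotient $C_n/B_n \twoheadrightarrow B_{n-1}$, since $\delta\psi = \psi \circ d_n$ factors through $d_n\colon C_n \to B_{n-1}$. Thus the kernel of the Kronecker map on cohomology is
\[
\Coker\bigl(\Hom(C_{n-1},\N G) \to \Hom(B_{n-1},\N G)\bigr).
\]
I would factor this restriction through the chain of inclusions $B_{n-1} \subset Z_{n-1} \subset C_{n-1}$ and analyze each step via the short exact sequences
\[
0 \to Z_{n-1} \to C_{n-1} \to B_{n-2} \to 0, \qquad 0 \to B_{n-1} \to Z_{n-1} \to H_{n-1}(C_*) \to 0.
\]
The first contributes cokernel $\Ext^1_{\N G}(B_{n-2},\N G)$, using $\Ext^1_{\N G}(C_{n-1},\N G)=0$ by projectivity of $C_{n-1}$; the second contributes a submodule of $\Ext^1_{\N G}(H_{n-1}(C_*),\N G)$. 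Both have $L^2$-dimension zero by Lemma~\ref{lemma:l2-dim-ext-tor-with-NG}, so the total cokernel---and hence the kernel of the Kronecker map---vanishes in $L^2$-dimension.

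The principal technical nuisance is that the submodules $B_i, Z_i \subset C_i$ need not themselves be projective over $\N G$, so the classical universal coefficient short exact sequence does not apply verbatim. The argument instead splices several long exact sequences while tracking $L^2$-dimensions, with the vanishing of $\ldim \Ext^1_{\N G}(-,\N G)$ on finitely generated modules (Lemma~\ref{lemma:l2-dim-ext-tor-with-NG}) doing all of the heavy lifting.
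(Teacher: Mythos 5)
Your argument is, in substance, the paper's proof unrolled by hand: the paper runs the universal coefficient spectral sequence $E_2^{p,q}=\Ext^p_{\N G}(H_q(C_*),\N G)\Rightarrow H^{p+q}(\Hom(C_*,\N G))$ and kills everything above the bottom row using Lemma~\ref{lemma:l2-dim-ext-tor-with-NG}, whereas you splice the corresponding long exact sequences explicitly. Your identifications are correct: the cokernel of the evaluation map embeds in $\Ext^1_{\N G}(B_{n-1},\N G)$, and its kernel is $\Coker\{\Hom(C_{n-1},\N G)\to\Hom(B_{n-1},\N G)\}$, which is controlled by $\Ext^1_{\N G}(B_{n-2},\N G)$ and $\Ext^1_{\N G}(H_{n-1}(C_*),\N G)$ via the factorization $B_{n-1}\subset Z_{n-1}\subset C_{n-1}$.

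There is, however, one genuine gap: you justify the finite generation of $Z_i$, $B_i$ and $H_i(C_*)$ by saying they are ``subquotients of the $C_i$.'' Over a non-Noetherian ring---and $\N G$ is in general far from Noetherian---submodules of finitely generated modules need not be finitely generated, so this justification fails as stated; and you genuinely need finite generation, both to apply Lemma~\ref{lemma:l2-dim-ext-tor-with-NG} to $H_{n-1}(C_*)$ and to apply Lemma~\ref{lemma:l2-dim-dual} to $H_n(C_*)$ in the final dimension count. The missing ingredient is exactly the structural fact the paper's proof invokes: by L\"uck, $\N G$ is semihereditary, so $B_{i-1}=\Im d_i$, being a finitely generated submodule of the projective module $C_{i-1}$, is itself projective; hence $0\to Z_i\to C_i\to B_{i-1}\to 0$ splits, and $Z_i$ (therefore also $H_i(C_*)$) is finitely generated---in fact $Z_i$ and $B_i$ are finitely generated projective. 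This also removes the ``principal technical nuisance'' you flag at the end: over $\N G$ the cycle and boundary modules \emph{are} projective, which is precisely how the paper obtains $\Ext^p_{\N G}(H_q(C_*),-)=0$ for $p\ge 2$ and the $L^2$-collapse of the spectral sequence. Once semihereditariness is added, your argument is complete and agrees with the paper's in all essentials.
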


\begin{proof}
  By \cite[p.~239]{Lueck:2002-1}, $\N G$ is semihereditary, namely,
  any finitely generated submodule of a projective module is
  projective.  It follows that $H_i(C_*)$ has projective dimension at
  most 1, i.e., $\Ext^p_{\N G}(H_i(C_*),M)=0$ for any $M$ and $p\ge
  2$.  (For, it is seen easily that both the boundary and cycle
  submodules in $C_i$ are finitely generated and projective.)

  Consider the universal coefficient spectral sequence
  \[
  E^{p,q}_2 = \Ext^p(H_q(C_*), \N G) \Longrightarrow
  H^n(\Hom(C_*,\N G)).
  \]
  By the above observation and
  Lemma~\ref{lemma:l2-dim-ext-tor-with-NG}, all the $E_2$-terms have
  $L^2$-dimension zero except (possibly) the bottom row
  $E^{0,q}_2=\Hom(H_q(C_*),\N G)$.  That is, the spectral sequence
  ``$L^2$-collapses'' at the $E_2$ terms.  It follows that the
  Kronecker evaluation map is an $L^2$-equivalence.  By
  Lemma~\ref{lemma:l2-dim-dual}, the conclusion on the $L^2$-dimension
  follows.
\end{proof}

\subsubsection*{$L^2$-signatures of hermitian forms over $\N G$}
\label{subsubsection:L2-metabolizer}


Here we discuss a formulation of $L^2$-signatures using only $\N G$ as
a ring, without making any explicit use of the Hilbert space $\lt G$
(particularly the traditional $L^2$-homology $H_*^{(2)}(-;\lt G)$).
While this setup seems to be known, the author could not find a
treatment in the literature.  Since it is the right setup for out
later arguments, we give a quick outline here.  (An elementary through
treatment with full details will be given in a separate introductory
article of the author.)

We call an $\N G$-module homomorphism $\lambda\colon A \to A^*$ with
$A$ finite generated a \emph{sesquilinear form} over $\N G$, viewing
it often as $\lambda \colon A\times A \to \N G$.  We say that
$\lambda$ is \emph{hermitian} if $A$ is finitely generated and
$\lambda(x)(y)=(\lambda(y)(x))^*$ for $x,y\in A$.  We say that
$\lambda$ is \emph{$L^2$-nonsingular} if $\lambda$ is an
$L^2$-equivalence as a homomorphism $A\to A^*$.

Suppose $\lambda\colon A \to A^*$ is a hermitian form over~$\N G$.
While $A$ is not necessarily projective, $T(A)$ always lies in the
kernel of $\lambda$ by the functoriality of $T(-)$ and by
Lemma~\ref{lemma:l2-dim-dual}, and thus $\lambda$ gives rise to a
hermitian form $P(\lambda)$ on the projective module~$P(A)$.  Taking
$Q$ satisfying $P(A)\oplus Q=(\N G)^n$ ($n<\infty$), since
$P(\lambda)\oplus 0$ on $P(A)\oplus Q$ is a hermitian form over the
free module $(\N G)^n$, we can apply functional calculus to obtain an
orthogonal decomposition
\[
(\N G)^n= A_+ \oplus A_- \oplus A'_0
\]
such that $P(\lambda)\oplus 0$ is positive definite, negative
definite, and zero on the submodules $A_+$, $A_-$, and $A'_0$,
respectively.  Here we say that $\lambda$ is positive (resp.\
negative) definite on $A$ if for any nonzero $x\in A$, $\lambda(x,x)$
is of the form $a^*a$ (resp.\ $-a^*a$) for some nonzero $a \in \N G$.
In fact, since $Q\subset A_0'$ is a summand, for $A_0=A'_0\cap P(A)$,
we also have a decomposition
\[
P(A)=A_+\oplus A_-\oplus A_0.
\]


Now the
\emph{$L^2$-signature} of $\lambda$ is defined by
\[
\lsign \lambda = \ldim A_+ - \ldim A_- \in \R.
\]
$\lsign$ is well-defined, e.g., by carrying out a standard proof for
Sylvester's law of inertia, using $\ldim$ in place of the ordinary
complex dimension.  (A similar argument is given in the proof of
Proposition~\ref{proposition:l2-metabolizer} below.)

It is known that $\lsign$ induces a real-valued homomorphism of the
$L$-group $L^0(\N G)$.  We need a slightly stronger fact, namely,
the following $L^2$-analogue of ``topologist's signature vanishing
criterion'':

\begin{proposition}
  \label{proposition:l2-metabolizer}
  If $\lambda\colon A\to A^*$ is an $L^2$-nonsingular hermitian form
  over $\N G$ and there is a submodule $H\subset A$ satisfying
  $\lambda(H)(H)=0$ and $\ldim H \ge \frac12 \ldim A$, then $\lsign
  \lambda = 0$ and $\ldim H=\frac12 \ldim A$.
\end{proposition}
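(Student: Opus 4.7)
The plan is to imitate the classical argument that a nonsingular hermitian form admitting a Lagrangian of at least half the dimension must have vanishing signature, with $\ldim$ in place of ordinary complex dimension, the functional calculus decomposition in place of eigenspace decomposition, and a preliminary reduction to the projective part $P(A)$.

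First I would reduce to the case $A=P(A)$. By Lemma~\ref{lemma:l2-dim-dual}, $T(A)^*=0$, so $\lambda$ annihilates $T(A)$ and descends to a hermitian form $P(\lambda)$ on $P(A)$; the image $\bar H$ of $H$ in $P(A)$ still satisfies $P(\lambda)(\bar H,\bar H)=0$, and $\ldim \bar H=\ldim H$ because $H\cap T(A)$ is a submodule of the $L^2$-dimension-zero module $T(A)$. Moreover $P(\lambda)$ is still $L^2$-nonsingular (its kernel and cokernel differ from those of $\lambda$ by $L^2$-dimension-zero contributions), and $\ldim P(A)=\ldim A$. By definition of $\lsign$, $\lsign\lambda=\lsign P(\lambda)$, so we may replace $(A,\lambda,H)$ by $(P(A),P(\lambda),\bar H)$.

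With $A$ projective, I would apply the functional calculus decomposition $A=A_+\oplus A_-\oplus A_0$ obtained by embedding $A$ as a summand of $(\N G)^n$, as described just before the proposition. Since $\lambda$ vanishes on $A_0$ and $\lambda$ is $L^2$-nonsingular, $\ldim A_0\le \ldim \Ker \lambda =0$, and hence $\ldim A=\ldim A_+ + \ldim A_-$. Let $p_+\colon A\to A_+$ be the projection. I claim $\ldim \Ker(p_+|_H)=0$: for $h$ in this kernel, decompose $h=h_-+h_0$; from $\lambda(h,h)=0$ and the $\lambda$-orthogonality of the three summands we get $\lambda(h_-,h_-)=0$, and negative definiteness on $A_-$ (together with the fact that $a^*a=0$ forces $a=0$ in $\N G$) forces $h_-=0$, so the kernel lies inside $A_0$. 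From the exact sequence $0\to \Ker(p_+|_H)\to H\to p_+(H)\to 0$ we conclude $\ldim H\le \ldim A_+$, and symmetrically $\ldim H\le \ldim A_-$.

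Combined with the hypothesis $\ldim H\ge \tfrac12(\ldim A_++\ldim A_-)$ these inequalities force $\ldim A_+=\ldim A_-=\ldim H=\tfrac12 \ldim A$, so $\lsign\lambda=\ldim A_+-\ldim A_-=0$. The only non-routine point is the definiteness implication $\lambda(h_-,h_-)=0\Rightarrow h_-=0$; this rests on the convention that $\lambda$ negative definite on $A_-$ means $\lambda(x,x)=-a^*a$ for some $a\ne 0$ whenever $x\ne 0$, together with the standard fact that $a^*a=0$ forces $a=0$ in $\N G$. Everything else is $\ldim$-bookkeeping (additivity in short exact sequences and $\ldim$ of submodules) and the reduction $A\rightsquigarrow P(A)$.
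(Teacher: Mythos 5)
Your proof is correct and follows essentially the same route as the paper: reduce to the projective part $P(A)$, take the functional-calculus decomposition $A_+\oplus A_-\oplus A_0$ with $\ldim A_0=0$ by $L^2$-nonsingularity, and run the classical Sylvester-type bookkeeping with $\ldim$ in place of complex dimension. The only (harmless) variation is that you bound $\ldim H$ by $\ldim A_\pm$ via the projections $p_\pm|_H$ having $\ldim$-zero kernel, whereas the paper argues by contradiction through the intersections $H\cap A_\pm$; both rest on the same definiteness and additivity facts.
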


\begin{proof}
  A standard argument for the case of finite dimension complex vector
  spaces works, using $\ldim$ in place of the complex dimension.  For
  concreteness we describe details below:
  let $r=\frac12 \ldim A$.  Choose an orthogonal decomposition $A=A_+
  \oplus A_- \oplus A_0$ such that $\lambda$ is positive (resp.\
  negative) definite on $A_+$ (resp.\ $A_-$) and $A_0=\Ker \lambda$.
  Suppose $\ldim A_+> r$.  Since $\ldim H \ge r$, $\ldim H\cap A_+>0$.
  But since $\lambda$ is zero on $H$ and positive definite on $A_+$,
  $H\cap A_+=0$.  From this contradiction it follows that $\ldim A_+
  \le r$, and similarly $\ldim A_- \le r$.  By the
  $L^2$-nonsingularity, $\ldim A_0=0$ and $\ldim A_++\ldim A_- = 2r$.
  Therefore $\ldim A_+ = r = \ldim A_-$.  If $\ldim H > r$, then
  $\ldim H\cap A_+>0$, which is a contradiction.
%
\end{proof}

\subsection{More about $L^2$-homology and amenable groups in Strebel's
  class}
\label{subsection:more-on-L2-homology-and-D(R)}

In this subsection we generalize
Theorem~\ref{theorem:cha-orr-local-property-NG}.  We begin with an
observation that $\N G$ looks like a flat $\C G$-module to
the eyes of $L^2$-dimension, based on L\"uck's result on the vanishing
of the $L^2$-dimension of~$\Tor$.  Here we use the amenability
crucially.

\begin{lemma}
  \label{lemma:NG-tensoring-ker-im-coker}
  Suppose $G$ is amenable.  Then for a $\C G$-homomorphism $f\colon
  A\to B$, the natural $\N G$-module homomorphisms $\N G\otimes \Ker f
  \to \Ker\{1_{\N G}\otimes f\}$ is an $L^2$-equivalence.  The
  analogues for $\Im\{-\}$ and $\Coker\{-\}$ hold as well.
\end{lemma}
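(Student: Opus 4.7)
The plan is to reduce everything to the fact, due to L\"uck, that $\N G$ is \emph{dimension-flat} over $\C G$ whenever $G$ is amenable: for every $\C G$-module $N$ and every $i\ge 1$, one has $\ldim_{\N G}\Tor_i^{\C G}(\N G,N)=0$.  (In fact this property characterizes amenability.)  This is the only place amenability will enter; everything else is formal diagram-chasing with the properties of $\ldim$ recorded in Section~\ref{subsection:L2-dimension}.

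Factor $f$ as $A\twoheadrightarrow \Im f \hookrightarrow B$, producing the two short exact sequences of $\C G$-modules
\[
0 \to \Ker f \to A \to \Im f \to 0,\qquad 0\to \Im f \to B \to \Coker f \to 0,
\]
apply $\N G\otimes_{\C G}(-)$, and invoke the associated long exact sequences of $\Tor_*^{\C G}(\N G,-)$.

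For the cokernel, right-exactness of $\otimes$ gives an honest isomorphism $\N G\otimes\Coker f\cong\Coker(1_{\N G}\otimes f)$.  For the image, the second sequence yields
\[
0\to\Tor_1^{\C G}(\N G,\Coker f)\to \N G\otimes \Im f\to \N G\otimes B,
\]
and the image of the last arrow is exactly $\Im(1_{\N G}\otimes f)$, since $\N G\otimes A\to \N G\otimes \Im f$ is surjective; hence $\N G\otimes \Im f\twoheadrightarrow \Im(1_{\N G}\otimes f)$ has kernel of $L^2$-dimension zero.  For the kernel, the first sequence provides
\[
\Tor_1^{\C G}(\N G,\Im f)\to \N G\otimes \Ker f \to \N G\otimes A\to \N G\otimes\Im f\to 0,
\]
so the natural map $\N G\otimes \Ker f\to\Ker(1_{\N G}\otimes f)\subset\N G\otimes A$ has kernel a quotient of $\Tor_1^{\C G}(\N G,\Im f)$ (of $L^2$-dimension zero), while its cokernel $\Ker(1_{\N G}\otimes f)/\Im\{\N G\otimes \Ker f\to \N G\otimes A\}$ embeds into $\Ker\{\N G\otimes\Im f\to \N G\otimes B\}=\Tor_1^{\C G}(\N G,\Coker f)$, again of $L^2$-dimension zero.

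The only substantive step is L\"uck's dimension-flatness, so the ``hard part'' is really just recognizing that amenability buys flatness up to null sets in $L^2$-dimension; the rest is routine.  A small wrinkle is that L\"uck's result is stated for arbitrary $\C G$-modules, so no finite generation hypothesis on $A$ or $B$ is needed; if one preferred to stay within the finite-rank framework of Theorem~\ref{theorem:cha-orr-local-property-NG}, one could instead write $\Im f$ and $\Coker f$ as filtered colimits of finitely presented modules and use that $\ldim$ is compatible with directed unions of submodules.
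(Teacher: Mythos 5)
Your proof is correct and follows essentially the same route as the paper: the commutative diagram in the paper's proof encodes exactly the two short exact sequences through $\Im f$ that you use, and the key input in both arguments is L\"uck's vanishing $\ldim \Tor_1^{\C G}(\N G,-)=0$ for amenable $G$, followed by the same diagram chase. The chase you spell out (kernel of the comparison map killed by $\Tor_1(\N G,\Im f)$, cokernel embedded in $\Tor_1(\N G,\Coker f)$) is precisely the ``straightforward arguments'' the paper leaves to the reader.
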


Note that the cokernel part is obvious since tensoring is right exact.

\begin{proof}
  Tensoring with $\N G$ over $\C G$ gives us the following commutative
  diagram with exact row and column:
  \[
  \xymatrix @R=1.3em@C=1.2em{
    & \kern-2em\Tor^{\C G}_1(\N G, \Im f)\kern-2em \ar[d] \\
    & \N G \otimes \Ker f \ar[d]\\
    & \N G \otimes A \ar[d]\ar[dr]^{1_{\N G} \otimes f}\\
    \Tor^{\C G}_1(\N G, \Coker f) \ar[r]&
    \N G \otimes \Im f \ar[r]\ar[d] &
    \N G \otimes B \ar[r] & 
    \N G \otimes \Coker f \ar[r] &
    0 \\
    & 0
  }
  \]
  Due to L\"uck \cite[p.~259]{Lueck:2002-1}, $\ldim \Tor_1^{\C G}(\N
  G, -)=0$ since $G$ is amenable.  The conclusions follow by
  straightforward arguments.
\end{proof}

Now we introduce a lemma which makes crucial use of Strebel's class.
Let $R$ be a commutative ring with unity.  Recall that, a group $G$ is
said to be in $D(R)$ if a homomorphism $f\colon M\to N$ between
projective $RG$-modules is injective whenever $1_R\otimes f\colon
R\otimes M \to R\otimes N$ is injective~\cite{Strebel:1974-1}.
Reformulating this definition slightly, we have: a group $G$ is in
$D(R)$ if and only if for any homomorphism $f\colon M\to N$ between
$RG$-modules with $N$ projective, the restriction of $f$ on a
projective submodule $P \subset M$ is injective whenever the induced
map $R\otimes P \to R\otimes N$ is injective.

Obviously a homomorphism of an $R$-free module, say $R^I$, into
$R\otimes M$ lifts to a homomorphism of the $RG$-free module $(RG)^I
\to M$.  Thus, we have the following consequence:

\begin{lemma}
  \label{lemma:lemma:rank-of-image-D(R)}
  Suppose $G$ is in~$D(R)$.  Then for any homomorphism $f\colon
  M\to N$ between $RG$-modules with $N$ projective, the maximal
  rank of $RG$-free submodules in $\Im f$ is greater than or
  equal to the maximal rank of $R$-free submodules in $\Im\{1_R\otimes
  f\colon R\otimes M \to R\otimes N\}$.
\end{lemma}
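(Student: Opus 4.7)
The plan is to take any $R$-free submodule $R^I\subset \Im\{1_R\otimes f\}$, construct an $RG$-module homomorphism $(RG)^I\to M$ lifting the inclusion across $f$, and then use the reformulated $D(R)$-condition to promote its composition with $f$ to an injection into $N$. Its image then sits inside $\Im f$ and is $RG$-free of rank $|I|$, so taking the supremum over all $R$-free submodules of $\Im\{1_R\otimes f\}$ yields the desired comparison of maximal ranks.

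Concretely, for each standard basis vector $e_i\in R^I$ I would choose a preimage $\bar m_i\in R\otimes M$ under $1_R\otimes f$ (possible since $e_i\in\Im\{1_R\otimes f\}$), thereby defining an $R$-linear map $\alpha\colon R^I\to R\otimes M$ with $(1_R\otimes f)\circ\alpha$ equal to the inclusion $R^I\hookrightarrow R\otimes N$. Using that $R=RG/\langle g-1\mid g\in G\rangle$, so that $R\otimes M$ is a quotient of $M$, I would then pick preimages $m_i\in M$ of the $\bar m_i$ and let $\tilde g\colon (RG)^I\to M$ be the $RG$-homomorphism sending $e_i\mapsto m_i$; this is exactly the lift promised by the observation immediately preceding the lemma, and it satisfies $1_R\otimes \tilde g=\alpha$ under the identification $R\otimes_{RG}(RG)^I=R^I$.

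Setting $g=f\circ\tilde g\colon (RG)^I\to N$, the tensored map $1_R\otimes g=(1_R\otimes f)\circ\alpha$ is by construction the inclusion $R^I\hookrightarrow R\otimes N$, hence injective. Since $N$ is projective and $(RG)^I$ is itself a projective submodule of $(RG)^I$, the reformulated characterization of $D(R)$ recalled just above the lemma applies with $P=(RG)^I$ and forces $g$ to be injective. Therefore $\Im g\subset \Im f$ is $RG$-free of rank $|I|$, completing the construction. There is no serious obstacle here beyond faithfully chaining the two ingredients that the excerpt records just before the statement; the only mild care is the double lift (first from $R\otimes N$ to $R\otimes M$, then from $R\otimes M$ to $M$), which the preceding observation bundles implicitly.
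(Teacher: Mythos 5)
Your proposal is correct and is exactly the paper's intended argument: the paper's proof consists precisely of the observation that an $R$-map $R^I\to R\otimes M$ lifts to an $RG$-map $(RG)^I\to M$ (using surjectivity of $M\to R\otimes_{RG}M$), after which the (reformulated) $D(R)$ condition applied to $f\circ\tilde g\colon (RG)^I\to N$ forces injectivity and hence an $RG$-free submodule of $\Im f$ of the same rank. You have simply spelled out the two lifts that the paper leaves implicit.
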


Combining the above results obtained from amenability and Strebel's
condition, we obtain the following $L^2$-dimension estimates.  (cf.\
\cite[Lemma~4.4]{Cochran-Harvey:2004-1} for algebraic analogues for
PTFA groups.)

\begin{lemma}
  \label{lemma:L2-dim-of-im-and-ker}
  Suppose $G$ is amenable and in $D(R)$, $R=\Q$ or $\Z_p$, and
  $\phi\colon M\to N$ is a homomorphism between $\Z G$-modules.
  Denote the induced maps on $R\otimesover{\Z G}-$ and
  $\N G\otimesover{\Z G}-$ by $1_R\otimes\phi$ and
  $1_{\N G}\otimes\phi$.
  \begin{enumerate}
  \item If $N$ is projective, then $\dim_R\Im \{1_R\otimes\phi\} \le
    \ldim_G \Im \{1_{\N G}\otimes\phi\}$.

  \item If, in addition, $M$ is finitely generated and projective,
    then $\dim_R\Ker \{1_R\otimes\phi\} \ge \ldim_G \Ker \{1_{\N
      G}\otimes\phi\}$.
  \end{enumerate}
\end{lemma}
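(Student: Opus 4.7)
The argument splits along the two parts: (1) is the real work, and (2) follows from (1) by rank--nullity plus a summand trick for projectivity. For (1), my plan is to produce, for any $R$-linearly independent $f_1,\ldots,f_d\in\Im(1_R\otimes\phi)$, a rank-$d$ $\N G$-submodule of $\Im(1_{\N G}\otimes\phi)$ of $L^2$-dimension exactly $d$. After a short normalization, one may arrange $f_i=1_R\otimes\phi(m_i)$ for some $m_i\in M$; these package into a $\Z G$-map $\psi\colon(\Z G)^d\to N$ sending $e_i\mapsto\phi(m_i)$. By construction $\Im\psi\subseteq\Im\phi$, and $1_R\otimes\psi$ is injective since its image contains the independent $f_i$.

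The key use of $D(R)$ comes next. Extending scalars to $RG$ gives $\psi_R\colon(RG)^d\to RG\otimes_{\Z G}N$, a map between $RG$-projective modules (the target being projective because $N$ is $\Z G$-projective), with $1_R\otimes_{RG}\psi_R=1_R\otimes\psi$ injective; Strebel's condition $D(R)$ forces $\psi_R$ to be injective. A short descent, using $\Z$-torsion-freeness of $N$ (automatic from $\Z G$-projectivity) together with $\bigcap_n p^n(\Z G)^d=0$ to deal with $R=\Z_p$, promotes this to injectivity of $\psi$ over $\Z G$. Flatness of $\C G$ over $\Z G$ then makes $\psi_\C=1_{\C G}\otimes\psi$ injective, and Lemma~\ref{lemma:NG-tensoring-ker-im-coker} applied to the $\C G$-map $\psi_\C$ yields $\ldim\Ker(1_{\N G}\otimes\psi)=0$; hence $\ldim\Im(1_{\N G}\otimes\psi)=d$. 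Since $\Im(1_{\N G}\otimes\psi)\subseteq\Im(1_{\N G}\otimes\phi)$, taking the supremum over $d$ proves (1).

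For (2), I first handle $M=(\Z G)^k$ free of finite rank: here both $\dim_R(R\otimes M)$ and $\ldim(\N G\otimes M)$ equal $k$, so rank--nullity in the image/kernel exact sequences, combined with (1), converts the image estimate into the claimed kernel estimate. For general projective $M$, I pick $Q$ with $M\oplus Q=(\Z G)^k$, extend $\phi$ to $\phi'=\phi\oplus 0\colon(\Z G)^k\to N$, apply the free case to $\phi'$, and use $\Ker\phi'=\Ker\phi\oplus Q$ to split the inequality; the residual estimate $\ldim(\N G\otimes Q)\ge\dim_R(R\otimes Q)$ needed to cancel the $Q$-terms is exactly (1) applied to $\id_Q$. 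The main technical obstacle is the descent from $RG$- to $\Z G$-injectivity of $\psi$ when $R=\Z_p$; everything else is a formal combination of Strebel's condition with the amenability-driven $L^2$-flatness encoded by Lemma~\ref{lemma:NG-tensoring-ker-im-coker}.
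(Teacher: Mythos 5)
Your proposal is correct and follows essentially the same route as the paper: for (1) you use Strebel's condition to get an injective map of a rank-$d$ free module into $\Im\phi$ (you invoke the definition of $D(R)$ directly plus an explicit descent to $\Z G$, where the paper packages this as Lemma~\ref{lemma:lemma:rank-of-image-D(R)} and a remark on maps $\Z^I\to\Z^J$), and then Lemma~\ref{lemma:NG-tensoring-ker-im-coker} gives $\ldim$ of its $\N G$-image equal to $d$; for (2) both arguments reduce to the free case via stabilization and rank--nullity/additivity of $\ldim$. The only cosmetic difference is that you stabilize by $\phi\oplus 0$ and cancel the $Q$-terms using (1) on $\id_Q$, whereas the paper uses $\phi\oplus 1_P$ so that the kernels are unchanged; both work.
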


\begin{proof}
  (1) Let $d=\dim_R \Im \{1_R\otimes \phi\}$.  ($d$ may be any
  cardinal.)  Applying Lemma~\ref{lemma:lemma:rank-of-image-D(R)} to
  $f=1_{RG}\otimes\phi \colon RG\otimes M \to RG\otimes N$, we obtain
  an injection $(RG)^d \to \Im\{1_{RG}\otimes\phi\} \subset RG\otimes
  N$.  We may assume that it is induced by a homomorphism $i\colon (\Z
  G)^d \to \Im \phi \subset N$ (by multiplying it by an integer if
  necessary).  Note that for any two index sets, say $I$ and $J$, of
  arbitrary cardinality, a homomorphism $\Z^I \to \Z^J$ is injective
  whenever the induced map $R^I \to R^J$ is injective.  Especially, in
  our case, $i$ is injective.  Now,
  \[
  (\N G)^d = \N G \otimesover{\Z G}(\Z G)^d \cong \N G \otimesover{\Z
    G}(\Im i) \to \Im\{ 1_{\N G} \otimes i\}
  \]
  is an $L^2$-equivalence, by
  Lemma~\ref{lemma:NG-tensoring-ker-im-coker}.  Since $\Im\{1_{\N
    G}\otimes i \} \subset \Im\{ 1_{\N G} \otimes \phi\}$, we have
  \[
  d = \ldim \Im\{1_{\N G} \otimes i\} \le \ldim \Im\{ 1_{\N G} \otimes
  \phi\}.
  \]
  (2) We may assume that $M$ is free by taking a projective module $P$
  such that $M\oplus P$ is finitely generated and free, and replacing
  $\phi$ with $\phi\oplus 1_P\colon M\oplus P \to N\oplus P$.  Now,
  combine (1) with the equality
  \begin{multline*}
    \ldim \Ker\{1_{\N G}\otimes\phi \} + \ldim \Im\{ 1_{\N
      G}\otimes\phi\} = \ldim \N G \otimes M
    \\
    = \dim_R R\otimes M = \dim_R \Ker\{1_{R}\otimes\phi\} + \dim_R
    \Im\{1_{R}\otimes\phi\}. \qedhere
  \end{multline*}
\end{proof}

Using the above results, we give bounds for the $L^2$-dimension of $\N
G$-coefficient homology in terms of the ordinary dimension of
$R$-coefficient homology.

\begin{theorem}
  \label{theorem:L2-dim-estimate-of-NG-homology}
  \leavevmode\Nopagebreak
  \begin{enumerate}
  \item Suppose $G$ is amenable and in $D(R)$ with $R=\Q$ or $\Z_p$,
    and $C_*$ is a projective chain complex over $\Z G$ with $C_n$
    finitely generated.  Then we have
    \[
    \ldim H_n(\N G \otimesover{\Z G}C_*) \le \dim_R
    H_n(R\otimesover{\Z G}C_*).
    \]
  \item In addition, if $\{x_i\}_{i\in I}$ is a collection of
    $n$-cycles in $C_n$, then for the submodules $H\subset H_n(\N G
    \otimes C_*)$ and $\overline H \subset H_n(R\otimes C_*)$
    generated by $\{[1_{\N G}\otimes x_i]\}_{i\in I}$ and
    $\{[1_{R} \otimes x_i]\}_{i\in I}$, respectively, we have
    \[
    \ldim H_n(\N G \otimes C_*)-\ldim H \le \dim_R H_n(R\otimes C_*) -
    \dim_R \overline{H}.
    \]
  \end{enumerate}
\end{theorem}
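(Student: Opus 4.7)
The plan is to reduce both parts to the additivity of $\ldim$ on short exact sequences, combined with the dimension comparisons from Lemma~\ref{lemma:L2-dim-of-im-and-ker}, and then to bootstrap (2) from (1) by enlarging the chain complex so that the cycles $x_i$ become boundaries.

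For (1), I would examine the short exact sequence
\[
0 \to \Im(1_{\N G}\otimes \partial_{n+1}) \to \Ker(1_{\N G}\otimes \partial_n) \to H_n(\N G \otimes_{\Z G} C_*) \to 0.
\]
Since $C_n$ is finitely generated, $\N G \otimes_{\Z G} C_n$ is a finitely generated $\N G$-module, so every $\ldim$ above is finite and additivity gives
\[
\ldim H_n(\N G \otimes C_*) = \ldim \Ker(1_{\N G}\otimes \partial_n) - \ldim \Im(1_{\N G}\otimes \partial_{n+1}).
\]
Applying Lemma~\ref{lemma:L2-dim-of-im-and-ker}(2) to $\partial_n \colon C_n \to C_{n-1}$ (valid because $C_n$ is finitely generated projective and $C_{n-1}$ is projective) bounds the kernel term above by $\dim_R \Ker(1_R\otimes \partial_n)$, while Lemma~\ref{lemma:L2-dim-of-im-and-ker}(1) applied to $\partial_{n+1} \colon C_{n+1} \to C_n$ bounds the image term below by $\dim_R \Im(1_R \otimes \partial_{n+1})$. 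Subtracting and recognising the right-hand side as $\dim_R H_n(R \otimes C_*)$ gives (1).

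For (2), I would build an enlarged projective chain complex $C'_*$ by setting $C'_k = C_k$ for $k \ne n+1$ and $C'_{n+1} = C_{n+1} \oplus \bigoplus_{i \in I} \Z G \cdot e_i$, with boundary $\partial'_{n+1}$ extending $\partial_{n+1}$ by $e_i \mapsto x_i$. Since each $x_i$ is a cycle, $\partial_n \partial'_{n+1} = 0$, so this is a well-defined projective chain complex with $C'_n = C_n$ still finitely generated, and by construction
\[
H_n(\N G \otimes C'_*) \cong H_n(\N G \otimes C_*) / H, \qquad H_n(R \otimes C'_*) \cong H_n(R \otimes C_*) / \overline{H}.
\]
Applying (1) to $C'_*$ and then using additivity of $\ldim$ and $\dim_R$ on the short exact sequence $0 \to H \to H_n(\N G \otimes C_*) \to H_n(\N G \otimes C'_*) \to 0$ (and its $R$-analogue) rewrites the inequality into the form stated in (2).

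The main care needed throughout is to verify that every $\ldim$ and $\dim_R$ appearing is finite, so additivity can be rearranged into subtraction. This is automatic because $C_n$ (and hence $C'_n$) is finitely generated, which makes $\N G \otimes_{\Z G} C_n$ finitely generated over $\N G$ and $R \otimes_{\Z G} C_n$ finite-dimensional over $R$; all relevant subquotients inherit finiteness. Beyond this bookkeeping, the argument is essentially formal once Lemma~\ref{lemma:L2-dim-of-im-and-ker} is available, so I do not anticipate a serious obstacle.
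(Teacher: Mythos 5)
Your proof is correct and is essentially the paper's argument: the boundary map $\partial'_{n+1}$ of your enlarged complex $C'_*$ is exactly the auxiliary map $f(e_i,v)=x_i+\partial_{n+1}(v)$ that the paper uses, together with the same two bounds from Lemma~\ref{lemma:L2-dim-of-im-and-ker} and additivity of $\ldim$. The only difference is the order of bookkeeping (the paper proves (2) directly and gets (1) as the special case $H=0=\overline H$, while you prove (1) first and bootstrap (2)), which is immaterial.
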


\begin{proof}
  First we prove (2).  Let $\partial_n \colon C_n \to C_{n-1}$ be the
  boundary map, and define $f\colon (\Z G)^I\oplus C_{n+1} \to C_n$
  by $f(e_i,v) \mapsto x_i+\partial_{n+1}(v)$ where $\{e_i\}_{i\in I}$
  is the standard basis of $(\Z G)^I$.  Then we have
  \begin{gather*}
    H_n(\N G \otimes C_*)/H =
    \Ker\{1_{\N G}\otimes \partial_n\} / \Im \{1_{\N G}\otimes f\}\\
    H_n(R\otimes C_*)/\overline{H} = \Ker\{1_R \otimes \partial_n\} /
    \Im \{1_{R}\otimes f\}
  \end{gather*}
  Applying Lemma~\ref{lemma:L2-dim-of-im-and-ker}, we have
  \begin{multline*}
    \ldim H_n(\N G\otimes C_*)/H = \ldim
    \Ker\{1_{\N G}\otimes \partial_n\} - \ldim
    \Im\{1_{\N G}\otimes f\} \\
    \le \dim_R \Ker\{1_R \otimes \partial_n\} - \dim_R
    \Im\{1_R\otimes f\} = \dim_R H_n(R\otimes C_*)/\overline{H}.
  \end{multline*}
  This proves (2).  Considering the special case of $H=0=\overline H$,
  (1) follows.
\end{proof}

Now we apply the above result on the $L^2$-dimension to $L^2$-homology
of spaces.
For a space $X$ and a field $R$, we denote the $i$-th Betti number by
$b_i(X;R)=\dim_R H_i(X;R)$.  For $R=\Q$, we simply write
$b_i(X)=b_i(X;\Q)$.
When $X$ is endowed with $\pi_1(X)\to G$, we define the
\emph{$L^2$-Betti number} of $X$ by $b^{(2)}_i(X;\N G)=\ldim H_i(X;\N
G)$.  When the choice of $G$ is obvious, we write $b^{(2)}_i(X) =
b^{(2)}_i(X;\N G)$.

We remark that for a finite CW complex $X$, it easily follows, from
the standard properties of the $L^2$-dimension we mentioned above,
that the $L^2$ Euler characteristic $\sum_i (-1)^i \cdot b^{(2)}_i(X)$
is equal to the ordinary Euler characteristic~$\chi(X)$.

\begin{lemma}
  \label{lemma:vanishing-of-l2-H1}
  Suppose $X$ is a connected finite CW complex and $\phi\colon
  \pi_1(X) \to G$ is a homomorphism with $G$ amenable and in $D(R)$.
  If there is a loop $\alpha$ in $X$ such that $\phi([\alpha]) \in G$
  has infinite order and the homology class of $\alpha$ generates
  $H_1(X;R)$, then $b^{(2)}_0(X)=0 = b^{(2)}_1(X)$.
\end{lemma}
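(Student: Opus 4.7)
The plan is to prove the vanishing of $b^{(2)}_0(X)$ and $b^{(2)}_1(X)$ separately, with the hypothesis on $g:=\phi([\alpha])$ entering through a single algebraic input: right multiplication by $g-1$ on $\N G$ is injective, and hence $\ldim \N G/\N G(g-1)=0$.

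For $b^{(2)}_0$: since $X$ is connected, $H_0(X;\N G)$ is the coinvariant module $\N G\otimesover{\Z[\pi_1 X]}\Z$, i.e.\ the quotient $\N G/J$, where $J$ is the right $\N G$-submodule generated by the elements $\phi(\gamma)-1$ for $\gamma\in\pi_1(X)$. Because $g-1\in J$, the quotient $\N G/\N G(g-1)$ surjects onto $H_0(X;\N G)$. I would then verify that the right-multiplication map $R_{g-1}\colon \N G\to \N G$, $x\mapsto x(g-1)$, has trivial kernel: a relation $xg=x$ would force the image of $x$ in $\lt G$ to have coefficients constant along the infinite right $\langle g\rangle$-orbits, contradicting square-summability unless $x=0$. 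Additivity of $\ldim$ then gives $\ldim \N G/\N G(g-1)=1-1=0$, so $b^{(2)}_0(X)=0$.

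For $b^{(2)}_1$: after replacing $X$ with the mapping cylinder of $\alpha\colon S^1\to X$, I may assume that $\alpha$ is a subcomplex of $X$ homeomorphic to $S^1$, which preserves homotopy type and the local coefficient system. The cellular $\Z G$-chain complex for $\alpha$ (with the cover determined by $\phi|_{\pi_1\alpha}$), after tensoring with $\N G$, reduces to the two-term complex $\N G \xrightarrow{\cdot(g-1)} \N G$, so the injectivity of $R_{g-1}$ established above immediately gives $H_1(\alpha;\N G)=0$. The long exact sequence of the pair $(X,\alpha)$ with $\N G$-coefficients therefore yields an injection $H_1(X;\N G)\hookrightarrow H_1(X,\alpha;\N G)$, and Theorem~\ref{theorem:L2-dim-estimate-of-NG-homology}(1) applied to the finite free $\Z G$-complex $C_*(X,\alpha;\Z G)$ bounds $\ldim H_1(X,\alpha;\N G)$ by $\dim_R H_1(X,\alpha;R)$. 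An elementary chase with the $R$-coefficient long exact sequence --- using that $H_1(\alpha;R)=R$ surjects onto $H_1(X;R)$ by the generating hypothesis on $[\alpha]$, and that $H_0(\alpha;R)\to H_0(X;R)$ is an isomorphism since $X$ is connected --- then shows $H_1(X,\alpha;R)=0$, whence $b^{(2)}_1(X)=0$.

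The main obstacle will be the injectivity of $R_{g-1}$ on $\N G$, which is the only place in the argument where the infinite-order hypothesis on $\phi([\alpha])$ is essentially used; it ultimately rests on the faithfulness of the von Neumann trace. Once that is in hand, the remainder is a routine combination of the long exact sequence of $(X,\alpha)$ with the dimension estimate of Theorem~\ref{theorem:L2-dim-estimate-of-NG-homology}.
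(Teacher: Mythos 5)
Your proof is correct and takes essentially the same route as the paper: compare $X$ with the circle representing $\alpha$, note $H_i(X,\alpha;R)=0$ for $i\le 1$ so that the $L^2$-dimension estimate kills the relative term, and use the infinite order of $\phi([\alpha])$ to kill the circle's contribution. The only differences are cosmetic --- the paper gets $H_i(S^1;\N G)=0$ for $i\ge 1$ from the induced $G$-cover being a disjoint union of lines and obtains $b^{(2)}_0$ via $\chi(S^1)=0$, whereas you prove injectivity of right multiplication by $g-1$ directly and compute $H_0(X;\N G)$ as coinvariants; if you cite Theorem~\ref{theorem:cha-orr-local-property-NG} instead of Theorem~\ref{theorem:L2-dim-estimate-of-NG-homology} (as the paper does) you also avoid the restriction to $R=\Q$ or $\Z_p$.
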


\begin{proof}
  Choose $S^1\to X$ representing~$\alpha$.  Then $H_i(X,S^1;R)=0$ for
  $i=0,1$.  By Theorem~\ref{theorem:cha-orr-local-property-NG} (or
  Theorem~\ref{theorem:L2-dim-estimate-of-NG-homology}), it follows
  that $\ldim H_1(X,S^1;\N G)=0$.  Therefore $b^{(2)}_i(X) \le
  b^{(2)}_i(S^1)$ for $i=0,1$.  Since $\phi$ induces an injection
  $\pi_1(S^1)\to G$, the $G$-cover of $S^1$ is a disjoint union of
  lines.  Therefore $H_i(S^1;\N G)=0$ and $b^{(2)}_i(S^1)=0$ for all
  $i\ge 1$.  Since $\chi(S^1)=0$, $b^{(2)}_0(S^1)=0$.
\end{proof}

\begin{lemma}
  \label{lemma:dim-of-l2-H2}
  Suppose $(X,\partial X)$ is a 4-dimensional Poincar\'e duality pair
  (e.g., a compact 4-manifold with boundary) with $X$ and $\partial
  X$ connected, which is endowed with $\pi_1(X) \to G$ where
  $G$ is amenable and in $D(R)$.  Suppose $b^{(2)}_0(\partial
  X)$, $b^{(2)}_0(X)$, and $b^{(2)}_1(X)$ vanish, $b_1(X;R)=1$, and
  the inclusion-induced map $H_1(\partial X;R) \to H_1(X;R)$ is
  nontrivial.  Then $b^{(2)}_2(X)=b_2(X;R)$.
\end{lemma}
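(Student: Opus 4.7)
The plan is to match the $L^2$-Euler characteristic of $X$ against its ordinary $R$-Euler characteristic, exploiting the hypotheses to kill every $L^2$-Betti number except $b^{(2)}_2(X)$ on one side, and every ordinary Betti number except $b_2(X;R)$ on the other (up to the cancellation $b_0 - b_1 = 1 - 1 = 0$). The $L^2$-dimension estimate of Theorem \ref{theorem:L2-dim-estimate-of-NG-homology} is the bridge that will let us transfer vanishing from $R$-coefficients to $\N G$-coefficients.

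First I would verify that $b_3(X;R) = 0 = b_4(X;R)$. Since manifolds are oriented by the convention of the introduction and $\partial X \neq \emptyset$, Poincar\'e–Lefschetz duality gives $H_4(X;R) \cong H^0(X,\partial X;R) = 0$ and $H_3(X;R) \cong H^1(X,\partial X;R)$. The long exact sequence of $(X,\partial X)$ reads
\[
H_1(\partial X;R) \to H_1(X;R) \to H_1(X,\partial X;R) \to H_0(\partial X;R) \to H_0(X;R),
\]
where the last map is an isomorphism since both spaces are connected, and the first map, being nontrivial by hypothesis and landing in a $1$-dimensional target, is surjective. Hence $H_1(X,\partial X;R) = 0$, and universal coefficients give $H^1(X,\partial X;R) = 0$, so $b_3(X;R) = 0$.

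Next I would invoke Theorem \ref{theorem:L2-dim-estimate-of-NG-homology} applied to a finitely generated free $\Z G$-chain complex computing $H_*(X;\N G)$ (which exists because $X$ has the homotopy type of a finite CW complex), obtaining $b^{(2)}_i(X) \le b_i(X;R)$ for every~$i$. In particular $b^{(2)}_3(X) = b^{(2)}_4(X) = 0$. Together with the assumed vanishing $b^{(2)}_0(X) = b^{(2)}_1(X) = 0$ and the remark just before Lemma \ref{lemma:vanishing-of-l2-H1} that the $L^2$-Euler characteristic equals the ordinary one, this gives
\[
\chi(X) = \sum_i (-1)^i b^{(2)}_i(X) = b^{(2)}_2(X).
\]
On the other hand $\chi(X) = 1 - 1 + b_2(X;R) - 0 + 0 = b_2(X;R)$, and equating the two expressions for $\chi(X)$ yields the conclusion.

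No serious obstacle stands in the way: the argument assembles the $L^2$-dimension bound from Section \ref{subsection:more-on-L2-homology-and-D(R)} with classical Poincar\'e–Lefschetz duality and an Euler characteristic comparison. The one delicate input is reading the hypothesis on $H_1(\partial X;R) \to H_1(X;R)$ correctly, namely that combined with $b_1(X;R) = 1$ it forces surjectivity, which in turn is exactly what makes $H_1(X,\partial X;R)$ vanish. The hypothesis $b^{(2)}_0(\partial X) = 0$ is not needed for this particular conclusion, but is presumably recorded for the next step in the argument.
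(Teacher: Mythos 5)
Your argument is correct, but it reaches the vanishing of the high-degree $L^2$-Betti numbers by a genuinely different route than the paper. The paper first shows $b^{(2)}_1(X,\partial X)=0$ from the $\N G$-coefficient long exact sequence of the pair, using the hypotheses $b^{(2)}_1(X)=0$ and $b^{(2)}_0(\partial X)=0$, and then gets $b^{(2)}_3(X)=b^{(2)}_1(X,\partial X)=0$ and $b^{(2)}_i(X)=0$ for $i\ge 4$ by $\N G$-coefficient Poincar\'e duality together with Proposition~\ref{proposition:l2-dim-homology-cohomology}; the $R$-coefficient side is handled in the same way, and the two Euler characteristics are compared exactly as you do. You instead kill $b_3(X;R)$ and the higher $R$-Betti numbers by ordinary duality and the long exact sequence (your reading of the hypothesis on $H_1(\partial X;R)\to H_1(X;R)$ is the intended one), and then transfer this vanishing to the $L^2$ side via the estimate $b^{(2)}_i(X)\le b_i(X;R)$ of Theorem~\ref{theorem:L2-dim-estimate-of-NG-homology}. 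What your route buys is that you never need $L^2$-duality, and, as you observe, you never need $b^{(2)}_0(\partial X)=0$; note, however, that this hypothesis is not merely ``recorded for the next step''---the paper's own proof does use it, in the long exact sequence step above. What the paper's route buys is generality in the coefficient ring: Theorem~\ref{theorem:L2-dim-estimate-of-NG-homology} is stated only for $R=\Q$ or $\Z_p$, so your proof as written covers only those cases, whereas the paper's argument works for any field $R$ with $G$ amenable in $D(R)$. Since every application of the lemma in the paper has $R=\Q$ or $\Z_p$, this loss of generality is harmless in practice, but it is worth being aware that your proof establishes a slightly narrower statement than the one asserted. You should also record (one line via duality, $H_i(X;R)\cong H^{4-i}(X,\partial X;R)=0$ for $i\ge 5$) that the Betti numbers above degree $4$ vanish before truncating the Euler characteristic sums; this is a minor omission, not a gap.
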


We remark that if there is a loop $\alpha$ in $\partial X$ satisfies
the hypotheses of Lemma~\ref{lemma:vanishing-of-l2-H1} for both $X$
and $\partial X$, then Lemma~\ref{lemma:dim-of-l2-H2} applies.  For
example, it applies to an $(h)$-solution for a knot.

\begin{proof}
  From the hypothesis and the long exact sequence for $(X,\partial
  X)$, we have $b^{(2)}_1(X,\partial X)=0$.  By duality and
  Proposition~\ref{proposition:l2-dim-homology-cohomology}, we have
  $b^{(2)}_3(X)=b^{(2)}_1(X,\partial X)=0$, and $b^{(2)}_i(X)=0$ for
  $i\ge 4$.  Similarly
  $b_3(X;R)=b_1(X,\partial X)=0$, $b_i(X;R)=0$ for $i\ge 4$.  Since
  the $L^2$ Euler characteristic is equal to the ordinary Euler
  characteristic, it follows that $b^{(2)}_2(X) = b_2(X;R)$.
\end{proof}

\subsection{Proof of
  Theorem~\ref{theorem:n.5-solvable-obstruction-amenable-in-D(R)}}
\label{subsection:proof-n.5-solvable-obstruction}

We start with an observation on the integral and modulo $p$ Betti
numbers:

\begin{lemma}
  \label{lemma:mod-p-betti-number-of-solution}
  If $W$ is a 4-manifold such that both $H_1(W)$ and
  $\Coker\{H_1(\partial W)\to H_1(W)\}$ has no $p$-torsion, then
  $b_i(W)=b_i(W;\Z_p)$ for any~$i$.
\end{lemma}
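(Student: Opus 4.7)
The plan is to reduce, via the universal coefficient theorem, to the statement that $H_i(W;\Z)$ has trivial $p$-torsion for every~$i$. Writing $t_i$ for the number of cyclic $p$-primary summands in the torsion subgroup of $H_i(W;\Z)$, UCT gives
\[
H_i(W;\Z_p) \cong \big(H_i(W;\Z)\otimes\Z_p\big) \oplus \Tor\big(H_{i-1}(W;\Z),\Z_p\big),
\]
so that $b_i(W;\Z_p) = b_i(W) + t_i + t_{i-1}$. It therefore suffices to verify $t_i = 0$ for every~$i$.

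The cases $i\ne 2$ are easy. Indeed $H_0(W;\Z)$ is free, $t_1 = 0$ is the first hypothesis, and $H_i(W;\Z) = 0$ for $i\ge 5$ for dimensional reasons. For $i = 3, 4$, Poincaré-Lefschetz duality gives $H_i(W;\Z) \cong H^{4-i}(W,\partial W;\Z)$; since $H_0(W,\partial W;\Z)$ is free abelian (its basis is indexed by the components of $W$ disjoint from $\partial W$), UCT shows that $H^0(W,\partial W;\Z)$ and $H^1(W,\partial W;\Z)$ are both torsion-free.

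The main obstacle is the case $i = 2$. Here I would combine Poincaré-Lefschetz duality $H_1(W,\partial W;\Z) \cong H^3(W;\Z)$ with UCT to identify
\[
H_2(W;\Z)_{\mathrm{tors}} \cong H^3(W;\Z)_{\mathrm{tors}} \cong H_1(W,\partial W;\Z)_{\mathrm{tors}},
\]
so it suffices to show that $H_1(W,\partial W;\Z)$ has no $p$-torsion. For this, the long exact sequence of $(W,\partial W)$ breaks into the short exact sequence
\[
0 \to \Coker\{H_1(\partial W;\Z) \to H_1(W;\Z)\} \to H_1(W,\partial W;\Z) \to \Ker\{H_0(\partial W;\Z) \to H_0(W;\Z)\} \to 0.
\]
The right-hand term is a subgroup of the free abelian group $H_0(\partial W;\Z)$ and hence free, so the sequence splits. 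The second hypothesis says the left-hand term has no $p$-torsion, and therefore neither does the middle term. This gives $t_2 = 0$, completing the reduction.
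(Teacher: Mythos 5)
Your proof is correct and rests on the same ingredients as the paper's argument: the universal coefficient theorem, Poincar\'e--Lefschetz duality relating $H_2(W)$ and $H_3(W)$ to the low-degree (co)homology of the pair $(W,\partial W)$, and the long exact sequence of that pair, in which the freeness of $\Ker\{H_0(\partial W)\to H_0(W)\}$ isolates the cokernel term carrying the hypothesis. The only difference is bookkeeping: the paper compares $\Z$- and $\Z_p$-Betti numbers of $H_1(W,\partial W)$ to get $b_3(W)=b_3(W;\Z_p)$ and then reads off the absence of $p$-torsion in $H_2(W)$ and $H_3(W)$ from the universal coefficient theorem, whereas you verify directly that each integral homology group has no $p$-torsion.
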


As a special case, if $W$ is an integral $(h)$-solution of a knot,
then the conclusion of
Lemma~\ref{lemma:mod-p-betti-number-of-solution} holds.

\begin{proof}
  The conclusion is obvious for $i=1$.  Let $H=\Coker\{H_1(\partial
  W)\to H_1(W)\}$ and $r=\rank_\Z \Ker\{H_0(\partial W)\to H_0(W)\}$.
  Then $b_3(W)=b_1(W,\partial W)=r+\rank_\Z H$ and
  $b_3(W;\Z_p)=r+\rank_{\Z_p} (H\otimes_\Z \Z_p)$.  From the
  hypothesis, the conclusion for $i=3$ follows.  By the universal
  coefficient theorem $H_3(W;\Z_p)=(H_3(W)\otimes \Z_p) \oplus
  \Tor_1^\Z(H_{2}(W),\Z_p)$, it follows that both $H_3(W)$ and
  $H_2(W)$ have no $p$-torsion.  By the universal coefficient theorem
  again, the conclusion for $i=4$ and $i=2$ follows.
\end{proof}

\begin{proof}
  [Proof of
  Theorem~\ref{theorem:n.5-solvable-obstruction-amenable-in-D(R)}]
  First we consider the case of $R=\Z_p$.  Suppose $K$ is a knot with
  meridian $\mu$ and zero-surgery manifold $M(K)$, and $W$ is an
  integral $(n.5)$-solution for~$M(K)$.  Let $\{\tilde
  x_1,\ldots,\tilde x_r\}$ and $\{y_1,\ldots,y_r\}$ be an
  $(n+1)$-lagrangian and $(n)$-dual as in
  Definition~\ref{definition:solvable-knots} where $2r=\rank_\Z
  H_2(W)$.  Suppose $\phi\colon \pi_1(X) \to G$ is a homomorphism, $G$
  is amenable and in $D(\Z_p)$, $G^{(n+1)}=\{e\}$, and $\phi([\mu])$
  has infinite order in~$G$.

  From the meridian condition above, it follows that
  $b^{(2)}_0(M(K))=0=b^{(2)}_1(M(K))$ by
  Lemma~\ref{lemma:vanishing-of-l2-H1}.  Also, since $H_1(M(K))\cong
  H_1(W)$, $b^{(2)}_2(W)=b_2(W;\Z_p)=b_2(W)=2r$ by
  Lemma~\ref{lemma:dim-of-l2-H2} and
  Lemma~\ref{lemma:mod-p-betti-number-of-solution}.

  We will compute the von Neumann $\rho$-invariant of $(M,\phi)$
  using~$W$.  First, since the images of the $\tilde x_i$ and $y_i$
  form a lagrangian and its dual in $H_1(W;\Q)$, it follows that the
  ordinary signature of $W$ vanish.  To compute the $L^2$-signature of
  $W$, consider the intersection form
  \[
  \lambda\colon H_2(W;\N G) \times H_2(W;\N G)\to \N G.
  \]
  Since $b^{(2)}_2(M(K))=0$ and $b^{(2)}_1(M(K))=0$, $H_2(W;\N G) \to
  H_2(W,M(K);\N G)$ is an $L^2$-equivalence.  Therefore $\lambda$ is
  $L^2$-nonsingular.  Since $G^{(n+1)}=\{e\}$, $\pi_1(W)\to G$ factors
  through $\pi_1(W)/\pi_1(W)^{(n+1)}$ and there is an induced map
  \[
  H_2(W;\Z[\pi_1(W)/\pi_1(W)^{(n+1)}]) \to H_2(W;\N G).
  \]
  Let $H\subset H_2(W; \N G)$ and $\overline H \subset H_2(W;\Z_p)$ be
  the submodules generated by the images of the $\tilde x_i$.  By the
  existence of the $(n)$-dual, the images of the $\tilde x_i$ in
  $H_2(W;\Z_p)$ are linearly independent over the field $\Z_p$.
  Therefore, $\overline H$ has dimension $r$ over $\Z_p$.  From this,
  by Theorem~\ref{theorem:L2-dim-estimate-of-NG-homology}, it follows
  that
  \[
  \ldim H \ge b^{(2)}_2(W) - b_2(W;\Z_p) + \dim_{\Z_p} \overline H = r.
  \]
  By Proposition~\ref{proposition:l2-metabolizer}, it follows that
  $\lsign_G(W)=0$.

  The same argument also applies to $R=\Q$; in this case we do not
  need to appeal to Lemma~\ref{lemma:mod-p-betti-number-of-solution}
  to deal with the $\Z_p$-coefficient Betti number, since we can use
  $\Q$ when we apply
  Theorem~\ref{theorem:L2-dim-estimate-of-NG-homology}.
\end{proof}

\section{Examples and computation}
\label{section:examples-and-computation}

In this section, we give examples of knots which illustrate the
usefulness of the new obstructions.  The knots have very subtle
behavior in the knot concordance group---these are $(n)$-solvable, and
not distinguished from $(n.5)$-solvable knots by any PTFA
$L^2$-signature obstructions, but not $(n.5)$-solvable.  This is
detected using
Theorem~\ref{theorem:n.5-solvable-obstruction-amenable-in-D(R)}.

In order to construct non-PTFA amenable coefficient systems lying in
Strebel's class to which we apply
Theorem~\ref{theorem:n.5-solvable-obstruction-amenable-in-D(R)}, we
need some refined ingredients: mixed-coefficient commutator series and modulo
$p$ noncommutative Blanchfield form.  We begin by introducing the
former, and then proceed to the construction and analysis of the
examples.

\subsection{Mixed-coefficient commutator series}
\label{subsection:mixed-coefficient-commutator-series}

Suppose $G$ is a group and $\cP = (R_0,R_1,\ldots)$ is a sequence of
commutative rings with unity.

\begin{definition}
  \label{definition:mixed-coefficient-commutator-series}
  The \emph{$\cP$-mixed-coefficient commutator series} $\{\cP^k G\}$
  of $G$ is defined by $\cP^0G =G$,
  \[
  \let\to=\rightarrow
  \cP^{k+1}G = \Ker\Big\{ \cP^k G \to \frac{\cP^k G}{[\cP^k G,\cP^k
    G]} \to \frac{\cP^k G}{[\cP^k G,\cP^k
    G]}\otimesover{\Z} R_k\Big\}.
  \]
\end{definition}

We remark that ${\cP^k G}/{[\cP^k G,\cP^k G]}$ and $({\cP^k G}/{[\cP^k
  G,\cP^k G]})\otimes R_k$ can be identified with
$H_1(G;\Z[{G}/{\cP^kG}])$ and
$H_1(G;R_{k}[{G}/{\cP^kG}])$, respectively.  It can be
verified that $\cP^k G$ is a characteristic normal subgroup of $G$ for
any $\cP$ and~$k$.

\begin{example}
  \leavevmode\Nopagebreak
  \begin{enumerate}
  \item If $\cP=(\Z,\Z,\ldots)$, then $\{\cP^k G\}$ is the
    standard derived series~$\{G^{(k)}\}$.
  \item If $\cP=(\Q,\Q,\ldots)$, then $\{\cP^k G\}$ is the rational
    derived series of~$G$.
  \end{enumerate}    
\end{example}


\begin{lemma}
  \label{lemma:mixed-coefficient-commutator-series-is-amenable-in-D(R)}
  Suppose $\cP = (R_0,R_1,\ldots)$, $p$ is a fixed prime, and $R_k$
  has the property that any integer relatively prime to $p$ is
  invertible in $R_k$ for $k< n$.  Then for any group $G$ and $k\le
  n$, $G/\cP^k G$ is amenable and lies in~$D(\Z_p)$.
\end{lemma}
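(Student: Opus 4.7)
The plan is to apply Lemma~\ref{lemma:poly-mixed-groups} directly by exhibiting the appropriate subnormal series on $G/\cP^k G$ whose successive quotients are abelian with no torsion coprime to~$p$.

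First I would set $\Gamma = G/\cP^k G$ and consider the series
\[
\Gamma_i = \cP^i G / \cP^k G \quad (i=0,1,\ldots,k),
\]
so that $\Gamma_0 = \Gamma$ and $\Gamma_k = \{e\}$. This is a subnormal (in fact, characteristic) series because each $\cP^{i+1}G$ is characteristic in $\cP^i G$ by its very definition as the kernel of a canonical homomorphism out of $\cP^i G$.

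Next I would identify the successive quotients. By definition,
\[
\Gamma_i/\Gamma_{i+1} \cong \cP^i G/\cP^{i+1} G
\]
embeds into $(\cP^i G/[\cP^i G,\cP^i G])\otimes_\Z R_i$; in particular it is abelian. Now comes the key step: since $i<k\le n$, the ring $R_i$ has every integer coprime to $p$ invertible. Hence multiplication by any such integer is an automorphism of the $R_i$-module $(\cP^i G/[\cP^i G,\cP^i G])\otimes R_i$, so this module has no torsion of order coprime to~$p$. Consequently the subgroup $\Gamma_i/\Gamma_{i+1}$ inherits the same property: every torsion element has order a power of~$p$.

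With these two properties in hand, Lemma~\ref{lemma:poly-mixed-groups} applies verbatim to $\Gamma = G/\cP^k G$ and yields that $\Gamma$ is amenable and lies in $D(\Z_p)$. The only potential subtlety is the normality and characteristic nature of the $\cP^i G$, which is immediate from the functorial definition of $\cP^{i+1} G$ as the kernel of a canonical map determined by $\cP^i G$; there are no serious obstacles to the argument.
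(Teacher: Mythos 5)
Your proof is correct and follows essentially the same route as the paper: the paper also notes that $\cP^k G/\cP^{k+1}G$ embeds in the $R_k$-module $(\cP^kG/[\cP^kG,\cP^kG])\otimes R_k$ (written there as $H_1(G;R_k[G/\cP^kG])$), hence has no torsion coprime to $p$, and then invokes Lemma~\ref{lemma:poly-mixed-groups} for the induced series on $G/\cP^kG$. Your write-up just makes the subnormal series and the invertibility argument explicit.
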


\begin{proof}
  $H_1(G;R_k[G/\cP^kG])$ is a module over $R_k$ and consequently it
  has no torsion coprime to~$p$ as an abelian group.  Since the $\cP^k
  G/ \cP^{k+1} G$ is a subgroup of $H_1(G;R_k[G/\cP^kG])$, the
  conclusion follows from Lemma~\ref{lemma:poly-mixed-groups}.
\end{proof}

For example, if $R_k$ is either $\Z_p$ or $\Q$ for each $k$, then
$G/\cP^n G$ is amenable and lies in $D(\Z_p)$ by
Lemma~\ref{lemma:mixed-coefficient-commutator-series-is-amenable-in-D(R)}.

\subsection{Knots with vanishing PTFA $L^2$-signature obstructions}
\label{subsection:knots-with-vanishing-COT-obstruction}

We begin by recalling a standard satellite construction, which has
been used in several papers on knot concordance.  It is often called
``infection'' or ``generalized doubling''.

For a knot or link $K$ in $S^3$, we denote its exterior by
$X(K)=S^3-\inte N(K)$ where $N(-)$ is a tubular neighborhood.  Let $K$
be a knot in $S^3$, and $\eta$ be a simple closed curve in $X(K)$
which is unknotted in~$S^3$.  Choose $N(\eta)$ disjoint from $K$,
remove $\inte N(\eta)$ from $S^3$, and then fill in it with the
exterior $X(J)$ of another knot $J$ along an orientation reversing
homeomorphism on the boundary which identifies the meridian (resp.\
0-linking longitude) of $J$ with the 0-linking longitude (resp.\
meridian) of~$\eta$.  The result is homeomorphic to $S^3$ again, but
$K$ becomes a new knot in the resulting $S^3$, which we denote
by~$K(\eta,J)$.

It is easily seen that if $J$ and $J'$ are concordant, then
$K(\eta,J)$ and $K(\eta,J')$ are concordant.




\begin{proposition}
  \label{proposition:l2sign-satellite-n-solution}
  Suppose $K$ is slice, $[\eta]\in (\pi_1 X(K))^{(n)}$, and
  $\Arf(J)=0$.  Then there exists an $(n)$-solution $W$ for
  $K(\eta,J)$ satisfying the following: for any homomorphism
  $\phi\colon \pi_1 M(K(\eta,J)) \to G$ extending to $W$, if $G$ is
  amenable and in $D(R)$ for some $R$, then we have
  $\rhot(M(K),\phi)=\rhot(M(J),\psi)$ where $\psi\colon \pi_1 M(J) \to
  \Z_d$ is the surjection sending a meridian to $1\in \Z_d$ and $d$ is
  the order of $\phi([\eta])$ in~$G$.  (If $d=\infty$, $\Z_d$ is
  understood as~$\Z$.)
\end{proposition}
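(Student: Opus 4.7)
First I would construct $W$. Since $K$ is slice, let $V$ be its slice disk exterior, so $\partial V=M(K)$ and $V$ is an $(h)$-solution for every $h$. Since $\Arf(J)=0$, a standard construction produces a spin $4$-manifold $W_J$ with $\partial W_J=M(J)$, $\pi_1(W_J)\cong\Z$ generated by $\mu_J$, and $\sign(W_J)=0$. Glue $V$ and $W_J$ along the tubular neighborhoods of $\eta\subset\partial V$ and $\mu_J\subset\partial W_J$ via the infection identification (which matches meridian with longitude) to form $W$, so that $\partial W=M(K(\eta,J))$.

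I would then verify that $W$ is an $(n)$-solution. Mayer-Vietoris yields $H_1(W)\cong\Z$ (generated by a meridian) and $H_2(W)\cong H_2(V)\oplus H_2(W_J)$. The $V$-part of a lagrangian/dual pair lifts to the $\pi_1(W)^{(n)}$-cover because $V$ is a slice exterior, hence an $(h)$-solution for all~$h$. For the $W_J$-part, the image of $\pi_1(W_J)\cong\Z\langle\mu_J\rangle$ in $\pi_1(W)$ is the cyclic subgroup generated by the image of $\eta$, and since $[\eta]\in\pi_1(V)^{(n)}\subset\pi_1(W)^{(n)}$, this image lies in $\pi_1(W)^{(n)}$. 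Thus the $W_J$-lagrangian automatically lifts to $\pi_1(W)^{(n)}$-coefficients.

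To compute the $\rho$-invariant, I would decompose $W=V\cup_{M(K)}C\cup_{M(J)}W_J$, where $C$ is the standard infection cobordism from $M(K)\sqcup M(J)$ to $M(K(\eta,J))$. Novikov additivity of both the ordinary and $L^2$-signatures then yields
\[
\rhot(M(K(\eta,J)),\phi)=\bigl(\lsign_G(V)-\sign(V)\bigr)+\bigl(\lsign_G(C)-\sign(C)\bigr)+\bigl(\lsign_G(W_J)-\sign(W_J)\bigr).
\]
The $V$-term vanishes by Theorem~\ref{theorem:intro-obstruction-slice}, since $\phi$ extends over the slice exterior~$V$. The $C$-term vanishes by a direct computation of the $\Z$- and $\N G$-coefficient intersection forms of~$C$ (the cobordism is built from a single trivial $2$-handle attachment, giving $\lsign_G(C)=\sign(C)=0$). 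For the $W_J$-term, since $\pi_1(W_J)\cong\Z$ generated by $\mu_J$, the restriction $\phi|_{\pi_1(M(J))}$ factors as $\pi_1(M(J))\xrightarrow{\psi}\Z_d\hookrightarrow G$, and the invariance of $L^2$-signatures under subgroup inclusion gives $\lsign_G(W_J)-\sign(W_J)=\rhot(M(J),\psi)$.

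The main obstacle is the cyclic reduction of the $W_J$-contribution: it relies on being able to choose $W_J$ with $\pi_1(W_J)\cong\Z$ using $\Arf(J)=0$, and on the invariance of $L^2$-signatures under the inclusion of the cyclic subgroup $\langle\phi(\eta)\rangle$ into the amenable group~$G$. The vanishing of the $C$-contribution also requires careful bookkeeping of $\N G$-homology across the handle attachment, but this is routine once the structure of $C$ is fixed.
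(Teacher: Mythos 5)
Your proposal is correct and takes essentially the same route as the paper: build the Arf-invariant $0$-solution $W_J$ with $\pi_1(W_J)\cong\Z$, glue it to the slice disk exterior along a solid torus to get the $(n)$-solution (the Cochran--Orr--Teichner construction), then split $\rhot$ into the slice-exterior term (killed by Theorem~\ref{theorem:intro-obstruction-slice}), the standard-cobordism term (vanishing $L^2$-signature defect), and the $W_J$ term, which reduces to $\psi\colon\pi_1 M(J)\to\Z_d$ via $\pi_1(W_J)\cong\Z$ and the induction property of~$\rhot$. The only small slip is your parenthetical description of the infection cobordism $C$ as a single trivial $2$-handle attachment---it is the gluing of $M(K)\times[0,1]$ and $M(J)\times[0,1]$ along a solid torus---but its vanishing signature defect is exactly the standard computation the paper cites, so nothing essential changes.
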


We remark that in
Proposition~\ref{proposition:l2sign-satellite-n-solution} we do not
assume $G^{(n+1)}=\{e\}$.  Before proving
Proposition~\ref{proposition:l2sign-satellite-n-solution}, we give a
consequence, which produces a large family of knots for which PTFA
$L^2$-signature obstructions vanish.  For this purpose we need the
following formula for abelian $\rho$-invariants of knots.  We denote
the Levine-Tristram signature function of a knot $J$ by
$\sigma_J\colon S^1\to \Z$.  Namely, choosing a Seifert matrix $A$ of
$J$, it is defined by $\sigma_J(w)=\sign\big( (1-w)A + (1-\overline
w)A^T \big)$.

\begin{lemma}[{\cite[Proposition 5.1]{Cochran-Orr-Teichner:2002-1},
    \cite[Corollary~4.3]{Friedl:2003-5},
    \cite[Lemma~8.7]{Cha-Orr:2009-01}}]
  \label{lemma:abelian-rho-invariant}
  Suppose $J$ is a knot with meridian $\mu$ and $\alpha\colon
  \pi_1M(J)\to G$ is a homomorphism whose image contained in an
  abelian subgroup of~$G$.  Then
  \[
  \rhot(M(J),\alpha) = \begin{cases} \int_{S^1} \sigma_{J}(w)\,dw &
    \text{if $\alpha([\mu])\in G$ has
      infinite order,} \\[1ex]
    \sum_{r=0}^{d-1} \sigma_{J}(e^{2\pi r\sqrt{-1}/d}) &\text{if
      $\alpha([\mu])\in G$ has order $d<\infty$.}
  \end{cases}
  \]
\end{lemma}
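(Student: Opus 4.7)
The plan is to reduce the assertion to the classical computation of $\rho$-invariants for cyclic representations of knot groups, which appears in various forms in the cited literature.

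First, I would observe that $\pi_1(M(J))$ has abelianization $H_1(M(J))\cong\Z$, generated by the meridian class $[\mu]$, since the longitude is nullhomologous in the zero-surgery manifold. Because the image of $\alpha$ lies in an abelian subgroup of $G$, the homomorphism $\alpha$ factors as
\[
\pi_1(M(J)) \twoheadrightarrow H_1(M(J))\cong\Z \xrightarrow{\beta} G,
\]
where $\beta$ sends $1\in\Z$ to $\alpha([\mu])\in G$.

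Next, I would invoke naturality of $\rhot$ under group inclusions: if $H$ is a subgroup of $G$ and a homomorphism $\pi_1(M)\to G$ factors through an injection $H\hookrightarrow G$, then the $\rho$-invariants with $G$- and $H$-coefficients agree. Applying this, I can replace $G$ by the cyclic subgroup generated by $\alpha([\mu])$, which is either $\Z$ or $\Z_d$ depending on whether $\alpha([\mu])$ has infinite order or finite order $d$.

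It then suffices to verify the claimed formula for the canonical map $\pi_1(M(J))\to\Z$ or $\pi_1(M(J))\to\Z_d$ sending the meridian to the generator. From first principles, I would take the standard 4-manifold $W$ obtained by pushing a Seifert surface for $J$ into $B^4$ and performing the associated surgery, so that $\partial W = M(J)$, $\sign(W)=0$, and the intersection form on $H_2$ of any cyclic cover of $W$ is described explicitly in terms of a Seifert matrix $A$ for~$J$. In the infinite cyclic case, a Fourier-type decomposition of $H_2(W;\N\Z)$ identifies the $L^2$-signature of the intersection form with $\int_{S^1}\sigma_J(w)\,dw$, using that the pointwise signature of $(1-w)A+(1-\overline{w})A^T$ is $\sigma_J(w)$. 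In the finite cyclic case of order $d$, decomposing $H_2$ of the $d$-fold cover into isotypic components for the deck $\Z_d$-action and computing each character-twisted signature from the matrix $(1-\omega)A+(1-\overline{\omega})A^T$ at $\omega=e^{2\pi r\sqrt{-1}/d}$ yields the stated sum.

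The main obstacle is the bookkeeping with normalization conventions for $L^2$-signatures over the finite group algebra $\C\Z_d=\N\Z_d$ and with the boundary correction coming from $M(J)$, both of which are routine but easy to get wrong. In practice I would simply cite \cite[Proposition~5.1]{Cochran-Orr-Teichner:2002-1}, \cite[Corollary~4.3]{Friedl:2003-5}, and \cite[Lemma~8.7]{Cha-Orr:2009-01}, where precisely this calculation has been carried out in detail, and treat the naturality step and the factorization through $H_1(M(J))$ as the actual content of the reduction.
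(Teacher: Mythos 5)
Your reduction is the right one: since the image of $\alpha$ is abelian, $\alpha$ factors through $H_1(M(J))\cong\Z$, and the subgroup (induction) property of $\rhot$ lets you replace $G$ by the cyclic subgroup generated by $\alpha([\mu])$, so everything comes down to the canonical $\Z$- or $\Z_d$-coefficient computation. The paper itself offers no proof of this lemma beyond the three citations, so treating those computations as quotable is in line with its treatment, and your sketch of the $\Z$ case (pushed-in Seifert surface $4$-manifold $W$ with $\sign(W)=0$ and twisted form $(1-\omega)A+(1-\overline{\omega})A^T$, then the Fourier decomposition over $\N\Z$) is the standard argument.

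The genuine gap is exactly the finite-order ``bookkeeping'' you set aside as routine. Over $\N\Z_d=\C[\Z_d]$ the von Neumann trace picks out the coefficient of the identity, so $\ldim=\tfrac1d\dim_\C$; decomposing $H_2$ of the $d$-fold cover of $W$ into character-isotypic pieces therefore gives $\lsign(W)=\tfrac1d\sum_{r=0}^{d-1}\sign_{\omega^r}(W)$, and your own computation then yields $\rhot(M(J),\alpha)=\tfrac1d\sum_{r=0}^{d-1}\sigma_J(e^{2\pi r\sqrt{-1}/d})$, the \emph{average} of Tristram--Levine signatures, not the bare sum you assert it ``yields.'' The bare sum cannot be the value of $\rhot$: for a fixed knot with somewhere-nonzero signature function (the trefoil, say), applying the formula to the canonical surjections $\pi_1M(J)\to\Z_d$ would make $\rhot(M(J),\cdot)$ grow linearly in $d$, contradicting the universal Cheeger--Gromov bound $|\rhot(M,\phi)|\le C_M$ that this paper itself invokes, and only the average is consistent with the infinite-order case, where the answer is the normalized integral, i.e.\ the average over $S^1$. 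So as written your final step asserts a conclusion your argument does not deliver, and citing the references does not rescue the unnormalized formula; the correct finite-order statement carries the factor $\tfrac1d$, and any use of the lemma downstream (such as the size condition (C1) in Section~4) has to be calibrated against that normalization.
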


\begin{corollary}
  \label{corollary:vanishing-of-COT-obstruction}
  Suppose $K$ is slice, $[\eta]\in \pi_1 X(K)^{(n)}$, $\Arf(J)=0$, and
  $\int_{S_1} \sigma_{J}(w)\,dw=0$.  Then there is an $(n)$-solution
  $W$ for $J'=K(\eta,J)$ satisfying the following: for any
  homomorphism $\phi\colon \pi_1 M(J') \to G$ into a torsion-free
  amenable group $G$ lying in $D(R)$ that extends to $W$, we have
  $\rhot(M(J'),\phi)=0$.
\end{corollary}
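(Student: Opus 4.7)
The plan is to apply Proposition~\ref{proposition:l2sign-satellite-n-solution} and then exploit torsion-freeness of $G$: since $G$ has no nontrivial torsion, $\phi([\eta])$ is either the identity or of infinite order, so the satellite formula reduces the $\rho$-invariant to one computable by the abelian formula Lemma~\ref{lemma:abelian-rho-invariant}. I expect no substantial technical obstacle; the argument is a clean composition of the previous two results with a torsion-freeness case split.

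Concretely, I would take the $(n)$-solution $W$ for $J'=K(\eta,J)$ furnished by Proposition~\ref{proposition:l2sign-satellite-n-solution}, whose hypotheses are satisfied since $K$ is slice, $[\eta]\in \pi_1X(K)^{(n)}$, and $\Arf(J)=0$. For any $\phi\colon \pi_1 M(J')\to G$ into a torsion-free amenable group $G$ lying in $D(R)$ that extends to $W$, the proposition gives
\[
\rhot(M(J'),\phi) = \rhot(M(J),\psi),
\]
where $\psi\colon \pi_1 M(J)\to \Z_d$ sends a meridian to $1$ and $d$ is the order of $\phi([\eta])\in G$.

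Next I would case-split on $d$. If $\phi([\eta])=e$, then $d=1$, the target $\Z_d$ is trivial, and so $\psi$ is the trivial homomorphism; hence $\rhot(M(J),\psi)=0$ because the $L^2$-signature of a bounding $4$-manifold with trivial coefficient system equals its ordinary signature. If instead $\phi([\eta])$ has infinite order, then $d=\infty$, so $\psi\colon\pi_1M(J)\to\Z$ sends a meridian to a generator of an infinite cyclic (hence abelian) subgroup, and Lemma~\ref{lemma:abelian-rho-invariant} applies to give
\[
\rhot(M(J),\psi) = \int_{S^1}\sigma_J(w)\,dw = 0
\]
by the standing hypothesis. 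In either case $\rhot(M(J'),\phi)=0$, which is the desired conclusion. The only care point I foresee is the degenerate case $d=1$, which must be handled separately because the integral formula in Lemma~\ref{lemma:abelian-rho-invariant} is stated only for meridians of infinite order; but this case is settled immediately by the triviality of the representation.
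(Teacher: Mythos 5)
Your proposal is correct and follows essentially the same route as the paper: the paper's proof likewise observes that in a torsion-free group $\phi([\eta])$ is trivial or of infinite order and then combines Proposition~\ref{proposition:l2sign-satellite-n-solution} with Lemma~\ref{lemma:abelian-rho-invariant} and the hypothesis $\int_{S^1}\sigma_J(w)\,dw=0$. Your separate treatment of the trivial case is fine (it could also be absorbed into the lemma, since the $d=1$ formula gives $\sigma_J(1)=0$), so there is nothing substantive to change.
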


\begin{proof}
  Observing that the order of any element in a torsion-free group $G$
  is either $0$ or~$\infty$, the conclusion follows immediately from
  Proposition~\ref{proposition:l2sign-satellite-n-solution} and
  Lemma~\ref{lemma:abelian-rho-invariant}.
\end{proof}

Since PTFA groups are torsion-free amenable and in $D(R)$, the
conclusion of Corollary~\ref{corollary:vanishing-of-COT-obstruction}
says that the PTFA $L^2$-signature obstruction due to
Cochran-Orr-Teichner \cite[Theorem~4.2]{Cochran-Orr-Teichner:1999-1}
does not distinguish our $J'=K(\eta,J)$ from $(n.5)$-solvable knots,
and consequently any prior methods (e.g.,
\cite{Cochran-Teichner:2003-1, Cochran-Kim:2004-1,
  Cochran-Harvey-Leidy:2008-2, Cochran-Harvey-Leidy:2009-1,
  Cochran-Harvey-Leidy:2009-02, Cochran-Harvey-Leidy:2009-03}) do not
neither, since all these essentially depend on
\cite[Theorem~4.2]{Cochran-Orr-Teichner:1999-1}.  The following
definition formalizes the property:

\begin{definition}
  \label{definition:knots-with-vanishing-PTFA-signature-obstructions} 
    If $K$ admits an $(n)$-solution $W$ such that $\rhot(M(K),\phi)=0$
    for any $\phi\colon \pi_1 M(K) \to G$ into a PTFA group $G$ which
    extends to $W$, then we say that \emph{$K$ is an $(n)$-solvable
      knot with vanishing PTFA $L^2$-signature obstructions}.
\end{definition}

Obviously the knot $J'$ obtained by the satellite construction in
Corollary~\ref{corollary:vanishing-of-COT-obstruction} is an
$(n)$-solvable knot with vanishing PTFA $L^2$-signature obstructions.

Let $\mathcal{V}_n$ be the set of concordance classes of
$(n)$-solvable knots with vanishing PTFA $L^2$-signature obstructions.

\begin{proposition}
  \label{proposition:vanishing-PTFA-subgroup}
  For any $n$, $\mathcal{V}_n$ is a subgroup of the $n$th term $\F_n$
  of the solvable filtration.
\end{proposition}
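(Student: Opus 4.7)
The plan is to verify that $\mathcal{V}_n$ is closed under the concordance-group operation and under additive inversion; containment $\mathcal{V}_n\subset\F_n$ is immediate from the definition, and the class of the unknot lies in $\mathcal{V}_n$ trivially since any slice disk exterior is an $(n)$-solution and $\rhot$ vanishes over any PTFA coefficient system on a slice disk exterior by Theorem~\ref{theorem:intro-obstruction-slice}. The strategy for the remaining closure properties is uniform: starting from knots with $(n)$-solutions that realize the vanishing PTFA property, I would construct an $(n)$-solution for the outcome that also realizes the vanishing property.

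For closure under connected sum, suppose $K_1$ and $K_2$ are $(n)$-solvable with vanishing PTFA obstructions, witnessed by $(n)$-solutions $W_1$ and $W_2$. I would use the standard cobordism $C$ from $M(K_1)\sqcup M(K_2)$ to $M(K_1\# K_2)$ (built by attaching a round $1$-handle along meridional circles, as in \cite{Cochran-Orr-Teichner:1999-1}) and form $W = (W_1\sqcup W_2)\cup_{M(K_1)\sqcup M(K_2)} C$. A routine Mayer--Vietoris check shows that the $(n)$-lagrangians and their duals in the $W_i$ combine by direct sum to give an $(n)$-lagrangian and dual in $W$, so $W$ is an $(n)$-solution for $K_1\# K_2$. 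Given a PTFA group $G$ and $\phi\colon \pi_1 M(K_1\# K_2)\to G$ that extends to $W$, restriction to each $W_i$ produces $\phi_i\colon \pi_1 M(K_i)\to G$ extending over $W_i$, so $\rhot(M(K_i),\phi_i)=0$ by hypothesis. Since $H_*(C,\partial_- C;\Z)$ is trivial apart from a single $\Z$ generated by a meridian, the same Alexander-duality / $L^2$-homology vanishing argument used in the proof of Theorem~\ref{theorem:intro-obstruction-slice} gives $\lsign_G(C)=0=\sign(C)$. Signature additivity across the shared boundary components $M(K_i)$ then yields $\rhot(M(K_1\# K_2),\phi)=\rhot(M(K_1),\phi_1)+\rhot(M(K_2),\phi_2)=0$.

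For closure under inverses, the additive inverse of $[K]$ in $\cC$ is $[-K^r]$, the mirror reverse, and $M(-K^r)$ is identified with $M(K)$ with reversed orientation. If $W$ is an $(n)$-solution for $K$ realizing the vanishing property, then $-W$ is an $(n)$-solution for $-K^r$: the intersection form $\lambda_n$ merely changes sign under orientation reversal, and the $(n)$-lagrangian/dual pair is unaffected. For any PTFA group $G$ and any $\phi\colon \pi_1 M(-K^r)\to G$ extending to $-W$, the same $\phi$ extends to $W$, and orientation reversal gives $\rhot(M(-K^r),\phi)=-\rhot(M(K),\phi)=0$.

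The main obstacle is the verification, in the connected-sum step, that the standard cobordism $C$ contributes zero to both $\lsign_G$ and $\sign$ for every PTFA coefficient system. This reduces to showing $\ldim H_2(C,\partial C;\N G)=0$, which follows in the same way as in the proof of Theorem~\ref{theorem:intro-obstruction-slice}, once one checks that $C$ is an integral homology product relative to its boundary tori (so that the relevant $R$-coefficient relative homology vanishes in degrees $\le 2$ and Theorem~\ref{theorem:cha-orr-local-property-NG} applies). Given this, the signature-additivity step is automatic and the group-law closure follows.
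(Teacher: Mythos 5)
Your overall route is the one the paper takes: glue $W_1\sqcup W_2$ to the standard cobordism $C$ from $M(K_1)\sqcup M(K_2)$ to $M(K_1\mathbin{\#}K_2)$, check that the lagrangians and duals combine to make $W$ an $(n)$-solution, and deduce additivity of $\rhot$ because $C$ contributes nothing to either signature; your treatment of inverses via $-W$ and the mirror reverse (which the paper leaves implicit) is fine. The genuine gap is in your justification of the key input $\lsign_G(C)=0=\sign(C)$. You assert that $C$ is an integral homology product rel boundary, i.e.\ that the relevant relative homology with $R$-coefficients vanishes in degrees $\le 2$, so that Theorem~\ref{theorem:cha-orr-local-property-NG} applies exactly as in the proof of Theorem~\ref{theorem:intro-obstruction-slice}. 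That is false. Up to homotopy, $C$ is $M(K_1)\cup M(K_2)$ glued along a solid-torus neighborhood of meridians, so $H_1(C)\cong\Z$ but $H_2(C)\cong\Z^2$, generated by the capped Seifert surface classes of the two pieces; the long exact sequences then give $H_2(C,M(K_1\mathbin{\#}K_2))\cong\Z$, $H_2(C,M(K_1)\sqcup M(K_2))\cong\Z$, and $H_2(C,\partial C)\cong\Z^2$, with $\Q$- or $\Z_p$-coefficients as well. So the hypothesis of Theorem~\ref{theorem:cha-orr-local-property-NG} is simply not available, and Theorem~\ref{theorem:L2-dim-estimate-of-NG-homology} only yields $\ldim H_2(C,\partial C;\N G)\le 2$, which does not force the signatures to vanish.

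What actually makes $C$ harmless is not vanishing of its relative homology but the fact that all of its second homology is carried by the boundary 3-manifolds: the ordinary intersection form vanishes on the image of $H_2(\partial C)$, giving $\sign(C)=0$, and with $\N G$-coefficients a Mayer--Vietoris argument (using that $b^{(2)}_2$ of the closed 3-manifold pieces vanishes when the meridians map to infinite-order elements of the PTFA group $G$, with the degenerate case of a trivial restriction handled separately) shows that the part of $H_2(C;\N G)$ that can contribute to the $L^2$-signature has $L^2$-dimension zero, so $\lsign_G(C)=0$. This is precisely the computation the paper does not redo but cites from \cite[p.~113]{Cochran-Orr-Teichner:2002-1} and \cite[p.~1429]{Cochran-Harvey-Leidy:2009-1}; either cite it likewise or replace your homology-product claim by such an argument. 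With that repair, the rest of your proof (Mayer--Vietoris for the lagrangian, Novikov additivity, restriction of coefficient systems, and the inverse case) goes through as in the paper.
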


\begin{proof}
  Suppose $W_1$ and $W_2$ are $(n)$-solutions of two knots $K_1$ and
  $K_2$ satisfying
  Definition~\ref{definition:knots-with-vanishing-PTFA-signature-obstructions},
  respectively.  Let $K=K_1\mathbin{\#}K_2$.  It is known that there
  is a standard cobordism $C$ between $M(K)$ and $M(K_1)\cup M(K_2)$
  such that $\lsign_G(C)=0=\sign(C)$ for any $\pi_1(C)\to G$
  (e.g. see~\cite[p.~113]{Cochran-Orr-Teichner:2002-1},
  \cite[p.~1429]{Cochran-Harvey-Leidy:2009-1}).  Glueing $C$ with
  $W_1\cup W_2$ along $M(K_1)\cup M(K_2)$, we obtain an $(n)$-solution
  $W$ for~$K$.  If $\psi\colon \pi_1(W)\to G$ is a homomorphism into a
  PTFA group $G$, then for its restrictions $\phi\colon \pi_1 M(K)\to
  G$ and $\phi_i\colon \pi_1 M(K_i) \to G$, we have
  $\rhot(M(K),\phi)=\rhot(M(K_1),\phi_1)+\rhot(M(M_2),\phi_2)$ since
  $C$ has vanishing ($L^2$-) signatures.  Each $\rhot(M(K_i),\phi_i)$
  vanishes by the hypothesis.  This shows $[K]\in\mathcal{V}_n$.
\end{proof}


\begin{proof}
  [Proof of Proposition~\ref{proposition:l2sign-satellite-n-solution}]

  In \cite[Lemma~5.4, Remark~5.7]{Cochran-Orr-Teichner:2002-1}, a
  $(0)$-solution $W_J$ for $J$ is constructed as follows, assuming
  $\Arf(J)$ vanishes.  (See also \cite[Theorem~5.10]{Cha:2003-1}.)
  Pushing a Seifert surface of $J$, we obtain a properly embedded
  framed surface $\Sigma$ in the 4-ball~$B^4$.  Let $g$ be the genus
  of~$\Sigma$.  Then the desired $W_J$ is obtained by removing a
  tubular neighborhood $N(\Sigma)$ from $B^4$ and attaching $(g$
  2-handles$)\times S^1$ along a collection of half basis curves
  on~$\Sigma$.

  We claim that $\pi_1(W_J)=\Z$, generated by a meridian.  To see
  this, first consider the manifold $N$ obtained by cutting $B^4$
  along the trace of pushing from the Seifert surface to~$\Sigma$.
  The group $\pi_1(B^4-\Sigma)$ is an HNN extension of~$\pi_1(N)$.
  Since $N$ is homeomorphic to $B^4$ itself, it follows that
  $\pi_1(B^4-\Sigma)=\Z$.  Since any curve on $\Sigma$ is
  null-homotopic in $N$, the attachment of $(g$ 2-handles$)\times S^1$
  has no effect on the fundamental group.

  In \cite[Proposition~3.1]{Cochran-Orr-Teichner:2002-1}, an
  $(n)$-solution $W$ with $\partial W=K(\eta,J)$ is constructed by
  glueing $W_J$ with a slice disk exterior $X$ of $K$, along a solid
  torus on the boundary.  It is known that
  $\rhot(M(K(\eta,J)),\phi)=\rhot(M(K),\psi)+\rhot(M(J),\psi_J)$,
  where $\psi\colon\pi_1(M(K))\to G$ and $\psi_J\colon \pi_1(M(J))\to
  G$ are restrictions of an extension $\pi_1(W)\to G$ of the
  given~$\phi$ (e.g., see
  \cite[Proposition~3.2]{Cochran-Orr-Teichner:2002-1},
  \cite[Lemma~2.3]{Cochran-Harvey-Leidy:2009-1}).  By
  Theorem~\ref{theorem:intro-obstruction-slice}, $\rhot(M(K),
  \psi)=0$.  Since $\psi_J$ factors through $\pi_1(W_J)=\Z$, the image
  of $\psi_J$ is exactly the cyclic subgroup in $G$ generated by the
  image of a meridian of~$J$.  By
  the induction property for $\rhot$, 
  it follows that $\rhot(M(J),\psi_J)$ is given as desired.
\end{proof}

Our main examples are obtained by iterating this satellite
construction as follows:

\medskip
\begin{list}{}
  {\def\makelabel#1{\emph{#1. }\hfil}%
    \settowidth\labelwidth{\makelabel{Output}}%
    \leftmargin=\parindent \advance\leftmargin by\labelwidth
    \labelsep=0mm\itemindent=0em
  }
\item[Input] A knot~$J_0$, a sequence of knots $K_0$, $K_1$, \ldots,
  $K_{n-1}$, and simple closed curves $\eta_k$ in $X(K_k)$ which are
  unknotted in~$S^3$. 
\item[Output] A sequence of knots $J_0$, $J_1,
  \ldots, J_n$ defined by $J_{k+1} = K_k(\eta_k, J_k)$.
\end{list}
\medskip

Observe that $X(J_n)=X(J_0) \cup X(K_{0}\cup\eta_{0}) \cup \cdots \cup
X(K_{n-1}\cup\eta_{n-1})$.  From this we obtain another way to
describe $J_n$: let $K_0'$ be the unknot, and let
$K_{k+1}'=K_k(\eta_k,K_k')$.  That is, $K_k'$ is the knot obtained by
using the unknot as $J_0$ in the above construction.  Then, regarding
$\eta_0 \subset X(K_0) \cup X(K_1\cup \eta_1) \cup \cdots \cup
X(K_{n-1}\cup\eta_{n-1}) = X(K_n')$, we can write
$J_n=K_n'(\eta_0,J_0)$.

\begin{lemma}
  \label{lemma:infection-curve-depth-for-iterated-satellite}
  If $\lk(K_k, \eta_k)=0$ for each $k$, then $[\eta_0] \in \pi_1
  X(K_n')^{(n)}$.
\end{lemma}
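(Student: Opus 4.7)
The plan is to proceed by induction on $n \geq 1$, using the iterative infection description: $X(K_{n+1}')$ is obtained from $X(K_n) \setminus \inte N(\eta_n)$ by gluing in $X(K_n')$ along the boundary torus, with the meridian of $K_n'$ identified with the $0$-framed longitude $\lambda_{\eta_n}$ of $\eta_n$. The base case $n=1$ is immediate: since $K_1' = K_0(\eta_0, \text{unknot}) = K_0$, the claim reduces to $[\eta_0] \in \pi_1 X(K_0)^{(1)}$, which follows from $\lk(K_0,\eta_0)=0$.

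The inductive step rests on the following sublemma: \emph{the inclusion-induced homomorphism $\iota_* \colon \pi_1 X(K_n') \to \pi_1 X(K_{n+1}')$ satisfies $\iota_*(\pi_1 X(K_n')^{(k)}) \subseteq \pi_1 X(K_{n+1}')^{(k+1)}$ for every $k \geq 0$.} Granted the sublemma, applying it with $k=n$ to the inductive hypothesis $[\eta_0] \in \pi_1 X(K_n')^{(n)}$ yields $[\eta_0] = \iota_*[\eta_0] \in \pi_1 X(K_{n+1}')^{(n+1)}$, completing the induction.

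I would prove the sublemma by a secondary induction on $k$. The base case $k=0$ carries the geometric content: the abelianization $H_1(X(K_n')) \cong \mathbb{Z}$ is generated by the meridian $\mu_{K_n'}$, which under the satellite gluing is identified with $\lambda_{\eta_n} \subset X(K_n) \setminus \inte N(\eta_n)$. Because the inclusion $X(K_n) \setminus \inte N(\eta_n) \hookrightarrow X(K_{n+1}')$ takes the meridian of $K_n$ to that of $K_{n+1}'$, the homology class of $\lambda_{\eta_n}$ in $H_1(X(K_{n+1}'))$ equals $\lk(K_n,\eta_n)=0$. Hence $\iota_*(\mu_{K_n'}) \in \pi_1 X(K_{n+1}')^{(1)}$, and since $\mu_{K_n'}$ represents a generator of $\pi_1 X(K_n')/\pi_1 X(K_n')^{(1)}$, we conclude $\iota_*(\pi_1 X(K_n')) \subseteq \pi_1 X(K_{n+1}')^{(1)}$. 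The inductive step $k \to k+1$ for the sublemma is purely formal, since $\iota_*$ carries commutators to commutators.

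The main obstacle is the satellite gluing bookkeeping at the base case $k=0$: verifying that $\mu_{K_n'}$ really corresponds to $\lambda_{\eta_n}$ and that the inclusion $X(K_n) \setminus \inte N(\eta_n) \hookrightarrow X(K_{n+1}')$ preserves abelianizations. Once this identification is nailed down, the remainder of the argument is a routine derived-series computation of the kind familiar from the iterated infection constructions used throughout the knot concordance literature.
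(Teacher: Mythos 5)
Your proof is correct and follows essentially the same route as the paper's: an induction over the satellite stages using that the meridian of the inner knot is glued to the $0$-linking longitude of $\eta_k$, which is null-homologous since $\lk(K_k,\eta_k)=0$, so the entire inner group (being generated by its meridian modulo commutators) maps one step deeper into the derived series at each stage. Your sublemma $\iota_*\bigl(\pi_1 X(K_n')^{(k)}\bigr)\subseteq \pi_1 X(K_{n+1}')^{(k+1)}$ is just a clean formalization of the paper's terse induction, so no further comparison is needed.
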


\begin{proof}
  By the assumption, $[\eta_k]$ is a product of conjugates of a
  meridian of $K_k$ in $\pi_1 X(K_k)$.  Since a meridian of $K_k$ is
  identified with a parallel of $\eta_{k+1}$ in the satellite
  construction, an induction shows that $[\eta_0] \in
  \pi_1(X(K_n'))^{(n)}$.
\end{proof}

From this observation, we immediately obtain the following fact by
applying Proposition~\ref{proposition:l2sign-satellite-n-solution} and
Corollary~\ref{corollary:vanishing-of-COT-obstruction}:

\begin{proposition}
  \label{proposition:solution-for-iterated-satellite}
  Suppose $K_k$ is slice and $\lk(K_k, \eta_k)=0$ for each $k$, and
  $\Arf(J_0)=0$.  Then there is an $(n)$-solution $W$ for $J_n$ such
  that if $\phi\colon \pi_1 M(J_n) \to G$ is a homomorphism extending
  to $W$ and $G$ is amenable and in $D(R)$ for some $R$, then
  $\rhot(M(J_n),\phi) = \rhot(M(J_0),\psi)$ where $\psi\colon \pi_1
  M(J_0)\to \Z_d$ is a surjection and $d$ is the order of an element
  in~$G^{(n)}$.  In addition, if $\int_{S_1} \sigma_{J_0}(w) \, dw =
  0$, then via the $(n)$-solution $W$, $J_n$ has vanishing PTFA
  $L^2$-signature obstruction.
\end{proposition}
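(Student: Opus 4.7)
The plan is to reduce the statement to a single application of Proposition~\ref{proposition:l2sign-satellite-n-solution} via the alternative description $J_n = K_n'(\eta_0, J_0)$ noted in the paragraph immediately preceding Lemma~\ref{lemma:infection-curve-depth-for-iterated-satellite}. For this to make sense as input to that proposition, I need three things: $K_n'$ slice, $[\eta_0]\in \pi_1(X(K_n'))^{(n)}$, and $\Arf(J_0)=0$. The last is given, and the middle one is precisely Lemma~\ref{lemma:infection-curve-depth-for-iterated-satellite} applied under the hypothesis $\lk(K_k,\eta_k)=0$.

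For sliceness of $K_n'$ I would argue by induction on $k$. The base case $K_0'=\text{unknot}$ is slice. For the inductive step, observe that if $K_k'$ is slice, then replacing $K_k'$ by the unknot does not change the concordance class of $K_k(\eta_k, K_k')$ by the standard fact (noted just before Proposition~\ref{proposition:l2sign-satellite-n-solution}) that concordant infection knots produce concordant results; thus $K_{k+1}' = K_k(\eta_k,K_k')$ is concordant to $K_k(\eta_k,\text{unknot}) = K_k$, which is slice by hypothesis.

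With these three conditions verified, Proposition~\ref{proposition:l2sign-satellite-n-solution} applied to the triple $(K_n',\eta_0,J_0)$ produces an $(n)$-solution $W$ for $J_n$ such that for any $\phi\colon \pi_1 M(J_n)\to G$ extending to $W$ with $G$ amenable in $D(R)$, one has $\rhot(M(J_n),\phi) = \rhot(M(J_0),\psi)$ where $\psi\colon \pi_1 M(J_0)\to \Z_d$ sends a meridian to $1$ and $d$ is the order of $\phi([\eta_0])$ in $G$. The only thing to verify is that $d$ is the order of an element of $G^{(n)}$: since the inclusion $X(K_n')\hookrightarrow W$ induces a group homomorphism $\pi_1(X(K_n'))\to \pi_1(W)$, and derived series are preserved by any homomorphism, the containment $[\eta_0]\in \pi_1(X(K_n'))^{(n)}$ from Lemma~\ref{lemma:infection-curve-depth-for-iterated-satellite} forces $\phi([\eta_0])\in G^{(n)}$, so $d$ is indeed the order of an element of $G^{(n)}$, as claimed.

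For the additional statement under $\int_{S^1}\sigma_{J_0}(w)\,dw = 0$, I would invoke Corollary~\ref{corollary:vanishing-of-COT-obstruction} on the same triple $(K_n',\eta_0,J_0)$: for any PTFA (hence torsion-free amenable in $D(R)$) group $G$ and any $\phi$ extending to the same $W$, the $\rho$-invariant $\rhot(M(J_n),\phi)$ vanishes, which is precisely the definition of $J_n$ having vanishing PTFA $L^2$-signature obstructions through the $(n)$-solution $W$. (Alternatively, this drops out of the formula above: torsion-freeness forces $d=\infty$, and then Lemma~\ref{lemma:abelian-rho-invariant} equates $\rhot(M(J_0),\psi)$ with the integral $\int_{S^1}\sigma_{J_0}(w)\,dw$, which is zero by hypothesis.) No step here poses a real obstacle — the entire content was already packaged into Proposition~\ref{proposition:l2sign-satellite-n-solution} and Lemma~\ref{lemma:infection-curve-depth-for-iterated-satellite}; the only mild point requiring care is the derived-series bookkeeping that lands $\phi([\eta_0])$ in $G^{(n)}$.
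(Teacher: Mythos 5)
Your proof is correct and follows essentially the same route as the paper, which deduces the proposition immediately from Lemma~\ref{lemma:infection-curve-depth-for-iterated-satellite} together with Proposition~\ref{proposition:l2sign-satellite-n-solution} and Corollary~\ref{corollary:vanishing-of-COT-obstruction} applied to the description $J_n=K_n'(\eta_0,J_0)$; you simply make explicit the inductive sliceness of $K_n'$ and the derived-series bookkeeping placing $\phi([\eta_0])$ in $G^{(n)}$, which the paper leaves implicit. (One cosmetic point: in your parenthetical alternative, torsion-freeness only forces $d\in\{1,\infty\}$ rather than $d=\infty$, but either value gives a vanishing $\rho$-invariant, and your primary argument via the corollary is untouched.)
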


\subsection{New non-solvable knots}
\label{subsection:non-solvability-of-examples}

We apply the iterated satellite construction starting with an infinite
family of initial knots $J^i_0$ ($i=1,2,\ldots$).  Fix $n$ and fix a
family of knots $\{K_k\mid k=0,\ldots, n-1\}$ together with simple
closed curves $\eta_k$ in $X(K_k)$, and let
$J^i_{k+1}=K_k(\eta_k,J_k^i)$.  (One may use the same $(K_k,\eta_k)$
for all~$k$.)



\begin{theorem}
  \label{theorem:linearly-independent-knots}
  Suppose $n>0$, $K_k$ is slice, $\lk(K_k, \eta_k)=0$, and the
  Alexander module $H_1(M(K_k);\Z[t^{\pm1}])$ is nontrivial and
  generated by $[\eta_k]$ for each~$k$.  Then there is an infinite
  collection $\{J^i_0 \mid i=1,2,\ldots\}$ for which the knots $J^i_n$
  $(i=1,2,\ldots)$ obtained as above are $(n)$-solvable and have
  vanishing PTFA $L^2$-signature obstructions, but any nontrivial
  linear combination $\mathop{\#}_i^{\vphantom{i}} a_i^{\vphantom{i}}
  J^i_n$ of the $J^i_n$ is not integrally $(n.5)$-solvable (and
  therefore not slice.)
\end{theorem}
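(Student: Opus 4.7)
The plan is to fix a prime $p$, to choose the initial knots $J_0^i$ so that their abelian $\rho$-invariants at $p$-th roots of unity are linearly independent over $\Q$ while their $S^1$-integrals vanish, and then to apply Theorem~\ref{theorem:n.5-solvable-obstruction-amenable-in-D(R)} to a putative integral $(n.5)$-solution of a nontrivial linear combination via a coefficient system built from the mixed-coefficient commutator series of Section~\ref{subsection:mixed-coefficient-commutator-series} with last slot~$\Z_p$. Concretely, I would first choose knots $J_0^i$ ($i=1,2,\ldots$) with $\Arf(J_0^i)=0$ and $\int_{S^1}\sigma_{J_0^i}(w)\,dw=0$, and such that the numbers $\delta_i := \sum_{r=0}^{p-1}\sigma_{J_0^i}(e^{2\pi\sqrt{-1}\,r/p})$ are $\Q$-linearly independent. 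This can be arranged using appropriate connected sums of $(2,2k+1)$-torus knots whose Levine-Tristram signature jumps cluster near distinct $p$-th roots of unity, in the style of prior Cochran-Orr-Teichner and Cochran-Harvey-Leidy constructions. By Proposition~\ref{proposition:solution-for-iterated-satellite}, each $J_n^i$ is then $(n)$-solvable via an explicit $(n)$-solution $W^i$ and has vanishing PTFA $L^2$-signature obstructions.

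Next, suppose for contradiction that $J := \#_i a_i J_n^i$ is integrally $(n.5)$-solvable via some $V$ with $a_i\neq 0$ for some~$i$. I would build $E := C \cup (\sqcup_i |a_i|\,(\pm W^i))$, where $C$ is the standard connected-sum cobordism from $M(J)$ to $\sqcup_i |a_i|\,(\pm M(J_n^i))$, and glue to form the closed 4-manifold $Z := V \cup_{M(J)}(-E)$. For the sequence $\mathcal{P} := (\Q,\Q,\ldots,\Q,\Z_p)$ with $\Z_p$ at the $n$-th slot, set $G := \pi_1(Z)/\mathcal{P}^{n+1}\pi_1(Z)$ and let $\phi\colon\pi_1(Z)\to G$ be the quotient, with restrictions $\phi_i := \phi|_{\pi_1 M(J_n^i)}$. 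By Lemma~\ref{lemma:mixed-coefficient-commutator-series-is-amenable-in-D(R)}, $G$ is amenable, lies in $D(\Z_p)$, and $G^{(n+1)}=\{e\}$; since the meridian of $J$ survives to $H_1(Z;\Q)=\Q$, its $\phi$-image has infinite order in~$G$.

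The hard step, which I expect to be the main obstacle, is to arrange that for each $i$ with $a_i\neq 0$ the element $\phi_i([\eta_0^i])\in G^{(n)}$ has order exactly~$p$. By Lemma~\ref{lemma:infection-curve-depth-for-iterated-satellite}, $[\eta_0^i]$ already lies in $\pi_1(X(K_n'))^{(n)}$, so its image lands automatically in $G^{(n)}$; the real content is to show that the image is nontrivial and $p$-torsion rather than trivial or torsion-free. This would follow from an inductive Cochran-Harvey-Leidy-style analysis of the higher-order Alexander modules of $V$, carried out with the modulo-$p$ noncommutative Blanchfield pairing developed in Section~\ref{section:mod-p-Blanchfield-duality}. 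The hypothesis that each $H_1(M(K_k);\Z[t^{\pm 1}])$ is generated by $[\eta_k]$ would drive the induction: at each of the first $n-1$ levels the rational first-order quotient detects the next infection curve, and the final mod-$p$ tensor produces a $p$-torsion element at depth $n$ that still pairs nontrivially against $[\eta_0^i]$ via the mod-$p$ Blanchfield form. This is the step whose careful verification is the technical heart of the argument.

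Once $\phi$ is in hand with the required property, Theorem~\ref{theorem:n.5-solvable-obstruction-amenable-in-D(R)} applied to $V$ yields $\rhot(M(J),\phi|_{M(J)})=0$. On the other hand, using $E$ as a bounding manifold, Novikov additivity together with the vanishing of $\lsign_G$ and $\sign$ on the standard cobordism~$C$ gives
\[
\rhot(M(J),\phi|_{M(J)}) \;=\; \sum_i a_i\,\rhot(M(J_n^i),\phi_i).
\]
Proposition~\ref{proposition:l2sign-satellite-n-solution} evaluates each summand as $\rhot(M(J_0^i),\psi_i)$, where $\psi_i\colon\pi_1 M(J_0^i)\to\Z_{d_i}$ sends a meridian to $1$ and $d_i$ is the order of $\phi_i([\eta_0^i])$. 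By the key step $d_i=p$, so Lemma~\ref{lemma:abelian-rho-invariant} gives $\rhot(M(J_0^i),\psi_i)=\delta_i$. Combining, $\sum_i a_i\delta_i=0$, contradicting the $\Q$-linear independence of $\{\delta_i\}$ and completing the argument.
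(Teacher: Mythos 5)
Your overall architecture (satellite construction, a mixed-coefficient commutator quotient with $\Z_p$ in the last slot, Theorem~\ref{theorem:n.5-solvable-obstruction-amenable-in-D(R)} applied to the putative solution, Novikov additivity, and evaluation via Proposition~\ref{proposition:l2sign-satellite-n-solution} and Lemma~\ref{lemma:abelian-rho-invariant}) matches the paper, but the step you yourself flag as the ``hard step'' is a genuine gap, and it is not merely technically hard --- it is the wrong thing to aim for. You need $\phi_i([\eta_0^i])$ to have order exactly $p$ for \emph{every} $i$ with $a_i\neq 0$, but the Blanchfield/bordism machinery (Theorem~\ref{theorem:self-annihilating-submodule-blanchfield-form}) only controls the one boundary component that is tied to the $(n.5)$-solution $V$ through a chain of cobordisms; for the copies of the $(n)$-solutions $W^i$ that you cap off inside the closed manifold $Z$, nothing prevents the composite $\pi_1 M(J^i_n)\to\pi_1 Z\to G$ from killing the depth-$n$ curve, so the order may well be $1$. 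This is precisely why the paper leaves one copy of $M(J^1_n)$ uncapped, attaches the cobordisms $E_{n-1},\dots,E_0$ down to $M(J^1_0)$, and proves nontriviality (Theorem~\ref{theorem:nontriviality-of-coefficient-system}) only for that distinguished component, while for the capped copies it can only conclude that each contribution is ``either $0$ or $\pm\sum_r\sigma_{J^i_0}(w_1^r)$.'' Moreover, once you run the nontriviality induction you are forced to include the $E_k$'s, whose extra boundary components $M(K_k)$ contribute terms $\rhot(M(K_k),\phi_0)$ that do \emph{not} vanish in general; your closed-manifold bookkeeping has no such terms only because you never built the machinery needed for the key step.

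These two points together break your numerology. What one can actually extract is an identity of the form $\sum_i a_i\epsilon_i\delta_i = (\text{error})$ with unknown $\epsilon_i\in\{0,1\}$ coming from the unknown orders $d_i\in\{1,p\}$, plus real error terms bounded by the Cheeger-Gromov constants of the $M(K_k)$; $\Q$-linear independence of the $\delta_i$ at a single prime gives no contradiction from this (all $\epsilon_i$ could be $0$, and integer combinations of $\Q$-independent reals can equal a small nonzero real). The paper's fix is different: choose a distinct prime $p_i$ for each $J^i_0$, impose (C2) so that $\sigma_{J^i_0}$ vanishes at all $p_1$-power roots of unity for $i>1$ (making those uncontrolled contributions vanish whether $d=1$ or $p_1$), normalize signs so the capped copies of $V_1$ contribute with one sign, and impose the largeness condition (C1), $\sigma_{J^1_0}(w_1)>nL$ with $L$ the Cheeger-Gromov bound for the $M(K_k)$, so that the surviving equation $C\cdot\sum_r\sigma_{J^1_0}(w_1^r)=-\sum_k\rhot(M(K_k),\phi_0)$, $C\ge 1$, is impossible. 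To repair your proof you would need to replace the linear-independence scheme by this ``one distinguished index, kill the rest by (C2), beat the $M(K_k)$ terms by (C1)'' mechanism, and route the coefficient-system analysis through the cobordisms $W_k=E_k\cup\cdots\cup W_n$ rather than through the closed manifold $Z$.
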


Theorem~\ref{theorem:intro-linearly-independent-examples} in the
introduction immediately follows from
Theorem~\ref{theorem:linearly-independent-knots}.  The case of $n=0$
is classical, e.g., by Levine~\cite{Levine:1969-1} (and
Corollary~\ref{corollary:vanishing-of-COT-obstruction}).

In order to prove Theorem~\ref{theorem:linearly-independent-knots}, we
first describe how to choose~$\{J^i_0\}$.  Choose an infinite sequence
of $p_1, p_2,\ldots$ of distinct increasing primes which are greater
than the top coefficient of the Alexander polynomial
$\Delta_{K_0}(t)$.  Denote $w_i = e^{2\pi\sqrt{-1}/p_i}$.  For the
given $\{K_k\}$, let $L$ be the maximum of the Cheeger-Gromov bound
for the $\rho$-invariants of the $M(K_k)$ \cite{Cheeger-Gromov:1985-1}
(see also \cite{Cochran-Teichner:2003-1}).  Namely,
$|\rhot(M(K_k),\alpha)|< L$ for any $k=0,\ldots,n-1$ and for any
homomorphism $\alpha$ of~$\pi_1 M(K_k)$.

\begin{proposition}
  \label{proposition:infection-knot-construction}
  There is a family of knots $\{J^i_0\}$ satisfying the following:
  \begin{enumerate}
  \item[(C1)] $\sigma_{J^i_0}(w_i) = \sigma_{J^i_0}(w_i^{-1}) > nL$
    and $\sigma_{J^i_0}(w_i^r) = 0$ for $r\not\equiv \pm 1 \text{ mod
    } p_i$.
  \item[(C2)] $\sigma_{J^i_0}(w_j^r)=0$ for any $r$, whenever $i>j$.
  \item[(C3)] $\Arf(J^i_0)=0$ and $\int_{S^1} \sigma_{J^i_0}(w)\, dw=0$.
  \end{enumerate}
\end{proposition}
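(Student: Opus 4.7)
The plan is to build each $J^i_0$ as a connected sum of genus-one knots whose Levine--Tristram signature functions are step functions with carefully localized support on $S^1$, then modify to satisfy (C3). By passing to a subsequence if needed, arrange the primes $p_1<p_2<\cdots$ to grow rapidly enough (say $p_i>2p_{i-1}$) that for each $i$ one may pick an angle $\theta_i$ with $2\pi/p_i<\theta_i<\min\{4\pi/p_i,\ 2\pi/p_{i-1}\}$. The arc $A_i=\{e^{\sqrt{-1}\alpha}:|\alpha|<\theta_i\}\subset S^1$ then contains $w_i^{\pm1}$ but avoids every other $p_j^{th}$ root of unity for $j\le i$ (other than $w=1$).

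For each $i$, choose a genus-one knot $T_i$ whose Alexander polynomial has unit-circle roots exactly $e^{\pm\sqrt{-1}\theta_i}$; this is a standard explicit $2\times 2$ Seifert matrix construction. After replacing $T_i$ by its mirror if necessary, $\sigma_{T_i}$ is the step function equal to $+2$ on $A_i$ and $0$ elsewhere on $S^1$. Pick $N_i$ with $2N_i>nL$; then $\sigma_{T_i^{\# N_i}}=N_i\sigma_{T_i}$, so $\sigma_{T_i^{\# N_i}}(w_i^{\pm1})=2N_i>nL$ while $\sigma_{T_i^{\# N_i}}$ vanishes on every $p_j^{th}$ root of unity ($j\le i$) other than $w_i^{\pm1}$. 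Hence $T_i^{\# N_i}$ already satisfies (C1)--(C2).

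For (C3) I set $J^i_0:=T_i^{\# N_i}\mathbin{\#}S_i$, where $S_i$ is a connected sum of auxiliary genus-one knots whose Alexander polynomial roots all lie in a pair of short complex-conjugate open arcs $I\cup\bar I\subset S^1$ chosen disjoint from the finite set $E_i:=\{w_j^r:j\le i,\ 1\le r\le p_j-1\}\cup\{\pm1\}$. The basic building block is $A\mathbin{\#}\bar B$, where $A,B$ are genus-one knots with Alexander polynomial roots $e^{\pm\sqrt{-1}\alpha}$ and $e^{\pm\sqrt{-1}\beta}$ respectively for some $\alpha<\beta$ in $I$: a direct Seifert-form computation shows $\sigma_{A\mathbin{\#}\bar B}$ equals $+2$ on $(\alpha,\beta)\cup(-\beta,-\alpha)$ and $0$ elsewhere. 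In particular it vanishes on $E_i$, contributes $2(\beta-\alpha)/\pi$ to $\int_{S^1}\sigma\,dw$, and adds $\Arf(A)+\Arf(B)\pmod 2$ to Arf. Given any target $(t,a)\in\R\times\Z_2$, take $M$ copies of such a block with $\beta-\alpha=|t|\pi/(2M)$ (using mirrors if $t<0$), and possibly attach one additional block with Arf $1$; this realizes $(t,a)$ as $(\int\sigma,\Arf)$ of a knot whose signature vanishes on $E_i$. Applying this with $t=-\int\sigma_{T_i^{\# N_i}}$ and $a=-\Arf(T_i^{\# N_i})\pmod 2$ yields the required $S_i$, securing (C3) while preserving (C1)--(C2) since $\sigma_{S_i}\equiv0$ on $E_i$.

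The main technical step is the Seifert-form calculation of $\sigma_{A\mathbin{\#}\bar B}$ in the building block: writing the signature function of each genus-one knot as an explicit step function with a single pair of jumps on $S^1$ and using $\sigma_{\bar B}=-\sigma_B$, the claimed shape of $\sigma_{A\mathbin{\#}\bar B}$ follows. Everything else is routine bookkeeping of arc lengths and Arf invariants, with the only subtlety being the rapid-growth choice of primes in the first paragraph, which is what allows $I\cup\bar I$ to be taken disjoint from $E_i\cup\{\pm1\}$ uniformly in $i$.
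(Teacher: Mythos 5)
There is a genuine gap, and it sits at the heart of your construction. You claim that a genus-one knot $T_i$ whose Alexander polynomial has unit-circle roots exactly $e^{\pm\sqrt{-1}\theta_i}$ has $\sigma_{T_i}$ equal to $+2$ on the arc $A_i=\{e^{\sqrt{-1}\alpha}:|\alpha|<\theta_i\}$ and $0$ elsewhere. No knot has such a signature function: since $\Delta_K(1)=\pm1$, the form $(1-w)A+(1-\overline w)A^T$ is nonsingular for $w$ near $1$ and its signature there is that of $\pm\sqrt{-1}(A-A^T)$, which is $0$; so every Levine--Tristram signature function vanishes on a punctured neighborhood of $1$, whereas $A_i$ is a neighborhood of $1$. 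For a genus-one knot with a single conjugate pair of jump points $e^{\pm\sqrt{-1}\theta_i}$ the situation is exactly reversed: $\sigma_{T_i}\equiv 0$ on $A_i$ and $\sigma_{T_i}$ is constant (possibly $\pm2$) on the complementary long arc. Since $2\pi/p_i<\theta_i$, the points $w_i^{\pm1}$ lie in $A_i$, so your $T_i^{\#N_i}$ has $\sigma(w_i)=0$ and (C1) fails; worse, the nonzero values of $\sigma_{T_i^{\#N_i}}$ occur precisely at the roots of unity $w_i^r$ ($r\not\equiv 0,\pm1$) and $w_j^r$ ($j<i$) where (C1)--(C2) demand vanishing. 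Shrinking $\theta_i$ below $2\pi/p_i$ does not help: then the signature is nonzero on the whole long arc, which contains all the $w_j^r$. To localize the signature near $w_i^{\pm1}$ you need a two-jump ``bump'' --- in fact exactly the kind of block $A\#\bar B$ you build for the auxiliary knots $S_i$ --- and this is what the paper does, quoting the realization results of Cha--Livingston and \cite[Lemma 5.6]{Cha:2007-2} to get a knot $K_i$ whose signature is a bump supported in small neighborhoods of $w_i^{\pm1}$.

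Even after repairing the main block, your route to (C3) needs more care than ``routine bookkeeping'': the jump angles realizable by genus-one Seifert matrices form only a countable set, so hitting the exact value $t=-\int\sigma$ by prescribing arc lengths $\beta-\alpha=|t|\pi/(2M)$ is not automatic. The paper sidesteps this entirely with a reparametrization trick: setting $J_i=K_i\#(-K_i')$ with $K_i'$ the $(p_i,1)$-cable of $K_i$, one gets $\sigma_{J_i}(w)=\sigma_{K_i}(w)-\sigma_{K_i}(w^{p_i})$, which kills the signatures at all $w_j^r$ with $j\le i$, $r\not\equiv\pm1 \bmod p_i$, while $\int_{S^1}\sigma_{K_i'}\,dw=\int_{S^1}\sigma_{K_i}\,dw$ forces $\int_{S^1}\sigma_{J_i}\,dw=0$ exactly; the Arf condition is then handled by taking an even number of copies of $J_i$ (which also makes $\sigma(w_i)>nL$). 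Finally, note that the proposition is stated for the primes already fixed earlier in the paper, so passing to a rapidly growing subsequence is a restriction you should avoid (and, with the bump construction, do not need).
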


Our construction of the $\{J_0^i\}$ is similar to that of \cite[Proof
of Lemma 5.2]{Cha:2007-2}, but provides a stronger conclusion.  For
this purpose we need the following facts:

\begin{lemma}
  [Reparametrization Formula
  \cite{Litherland:1979-1,Cochran-Orr:1993-1,Cha-Ko:2000-1,Cha:2003-1}]
  \label{lemma:reparametrization-formula}
  For a knot $K$, let $K'$ be the $(r,1)$-cable of~$K$.  Then
  $\sigma_{K'}(w) = \sigma_{K}(w^r)$ for all $w\in S^1$ but finitely
  many points, and $\Delta_{K'}(t)=\Delta_K(t^r)$ up to
  multiplication by $at^n$, $a\ne 0$.
  \begin{trivlist}
  \item[] Addendum: If $w$ is a primitive $p^a$th root of unity for some
    prime $p$, then $\sigma_{K'}(w) = \sigma_{K}(w^r)$.
  \end{trivlist}
\end{lemma}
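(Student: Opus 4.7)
The plan is to invoke Litherland's general satellite formula for Tristram-Levine signatures and Alexander polynomials, and then specialize to the trivial pattern case. Recall that the $(r,1)$-cable $K'$ is a satellite of $K$ whose pattern $P$ is the $(r,1)$-curve on the standard torus inside the solid torus $V$; the winding number of $P$ in $V$ is $r$, and when $V$ is embedded unknottedly in $S^3$ the pattern becomes the $(r,1)$-torus knot $T(r,1)$, which is the unknot. Litherland's cabling formula \cite{Litherland:1979-1} (see also \cite{Cochran-Orr:1993-1,Cha-Ko:2000-1,Cha:2003-1}) then asserts
\[
\sigma_{K'}(w) = \sigma_K(w^r) + \sigma_{T(r,1)}(w), \qquad
\Delta_{K'}(t) = \Delta_K(t^r) \cdot \Delta_{T(r,1)}(t)
\]
(the second equality up to a unit $at^n$), valid for all $w\in S^1$ except finitely many points. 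Since $T(r,1)$ is the unknot, $\sigma_{T(r,1)}\equiv 0$ and $\Delta_{T(r,1)}\equiv 1$, which yields the main formulas.

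If one wishes to prove Litherland's formula directly rather than cite it, the standard geometric argument is to construct a Seifert surface $F'$ for $K'$ from $r$ parallel push-offs of a Seifert surface $F$ for $K$ inside a tubular neighborhood $N(K)$, joined by a single band wound once around $\partial N(K)$ so that the $r$ boundary copies of $K$ close up into the single knot $K'$. With respect to the basis of $H_1(F')$ obtained by taking the $r$ lifted copies of a Seifert basis for $F$, the resulting Seifert matrix $V'$ has a block cyclic form built from $V$ and $V^T$. Diagonalizing the cyclic block structure by $r$-th roots of unity, and using $\prod_{k=0}^{r-1}(x-\zeta^k t)=x^r-t^r$ with $\zeta = e^{2\pi\sqrt{-1}/r}$, one reduces the computation of $\det(tV'-(V')^T)$ and of $\sign((1-w)V'+(1-\overline{w})(V')^T)$ to the corresponding quantities for $K$ evaluated at $t^r$ and $w^r$, respectively. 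The triviality of $T(r,1)$ manifests algebraically as the absence of an ``extra'' pattern block, so the would-be sum over $r$-th roots of $w$ collapses to a single term $\sigma_K(w^r)$.

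For the addendum, the main formula holds wherever both sides are defined, namely $w\in S^1\setminus\{1\}$ with $\Delta_K(w^r)\ne 0$. At a primitive $p^a$-th root of unity $w$, this automatically holds unless $\Delta_K(w^r)=0$. In the exceptional case, the standard convention defines the Tristram-Levine signature at a root of the Alexander polynomial by averaging the two one-sided limits along $S^1$; since the $r$-th power map $S^1\to S^1$ is continuous, the one-sided limits on both sides of the equation transform compatibly, so the averaging convention preserves the equality. Alternatively, the direct Seifert-matrix block-diagonalization sketched above makes the equality pointwise in $w$ transparent.

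The main obstacle is not the signature computation itself, which follows cleanly from Litherland's formula plus the triviality of $T(r,1)$, but ensuring that the addendum holds at exceptional roots of unity: this requires confirming that the averaging convention is preserved under the substitution $w \mapsto w^r$ and, where used, that the Seifert-matrix block decomposition is orthogonal at $p^a$-th roots of unity. Both are routine but worth verifying.
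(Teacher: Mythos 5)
The main statement is handled the same way the paper handles it (citation to Litherland et al., optionally with the standard Seifert--matrix sketch), so that part is fine. The problem is the addendum, which is the only part the paper actually proves, and your argument for it has a genuine gap. The whole point of the prime-power hypothesis is the classical fact that the Alexander polynomial of a knot never vanishes at a prime-power root of unity (since $\Delta_K(1)=\pm1$ while $\Phi_{p^a}(t)\equiv(t-1)^{\phi(p^a)}$ mod $p$). Hence $\Delta_{K'}(w)=\Delta_K(w^r)\ne 0$ at such $w$ (note $w^r$ is again a prime-power root of unity or $1$), so both $\sigma_{K'}$ near $w$ and $\sigma_K$ near $w^r$ are locally constant, and one simply perturbs $w$ off the finite exceptional set of the main formula. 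That is the paper's one-line proof. You never invoke this fact; instead you split into cases and, in the ``exceptional case'' $\Delta_K(w^r)=0$, appeal to ``the standard convention'' that $\sigma$ at a root of the Alexander polynomial is the average of the one-sided limits.

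That branch does not work here, for two reasons. First, this paper does not use the averaging convention: $\sigma_J(w)$ is defined outright as $\sign\bigl((1-w)A+(1-\overline w)A^T\bigr)$, and for a singular Hermitian form this value need not equal the average of the one-sided limits (it does at a simple zero, but not in general), so the claimed compatibility under $w\mapsto w^r$ is unsubstantiated for the definition actually in force. Second, your non-exceptional branch rests on an unproven strengthening of the cited lemma, namely that the formula holds pointwise at every $w\ne 1$ with $\Delta_K(w^r)\ne 0$; the lemma as stated only gives equality off an unspecified finite set, and upgrading it to a pointwise statement at a given $w$ is exactly where one needs local constancy of both sides, i.e.\ the nonvanishing fact again. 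The repair is easy: observe that the exceptional case is vacuous because prime-power roots of unity are never zeros of knot Alexander polynomials, and then conclude by local constancy and perturbation as above.
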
  

The addendum is a consequence of the main statement, by the following
observation: if $w$ is a primitive $p^a$th root of unity, then for any
knot $K$, $\sigma_K$ is locally constant near $w$ since $w$ is not a
zero of the Alexander polynomial~$\Delta_K(t)$.  Therefore we can
perturb $w$ to avoid the exceptional points, without changing the
value of~$\sigma_K$.



\begin{proof}
  [Proof of Proposition~\ref{proposition:infection-knot-construction}]
  For each $i$, choose small $\epsilon_i>0$ such that the
  $\epsilon_i$-neighborhood $N_{\epsilon_i}(w_i)$ is disjoint from
  $T_i=\{w_j^r \mid j\le i$, $r\in \Z\}-\{w_i\}$.  Appealing to
  \cite{Cha-Livingston:2002-1}, \cite[Lemma 5.6]{Cha:2007-2}, there is
  a knot $K_i$ with a bump signature function supported by
  $N_{\epsilon_i}(w_i)\cup N_{\epsilon_i}(w_i^{-1})$, that is,
  $\sigma_{K_i}(w_i)> 0$ and $\sigma_{K_i}$ vanishes outside
  $N_{\epsilon_i}(w_i) \cup N_{\epsilon_i}(w_i^{-1})$.  (Recall the
  symmetry $\sigma_{K}(w)=\sigma_{K}(w^{-1})$.)

  Let $K_i'$ be the $(p_i,1)$-cable of $K_i$, and let $J_i=K_i
  \mathbin{\#} (-K_i')$.  By Reparametrization Formula, we have
  $\sigma_{J_i}(w_j^r) = \sigma_{K_i}(w_j^r) -
  \sigma_{K_i}(w_j^{rp_i})$.  Therefore, for $j=i$, we have
  $\sigma_{J_i}(w_i^r) = \sigma_{K_i}(w_i^r) - \sigma_{K_i}(1) =
  \sigma_{K_i}(w_i^r)$.  Thus $\sigma_{J_i}(w_i^r)\ne 0$ if and only
  if $r\equiv \pm 1 \text{ mod } p_i$.  Also, for $j<i$, we have
  $\sigma_{J_i}(w_j^r)=0$ since $\sigma_{K_i}(w_j^r)$ vanishes for
  any~$r$.

  By the Reparametrization Formula, we have $\int_{S^1}\sigma_{K_i}(w)\,
  dw = \int_{S^1}\sigma_{K_i'}(w)\, dw$.  It follows that
  $\int_{S^1}\sigma_{J_i}(w)\, dw = 0$.  Now the desired knot $J^i_0$
  is obtained by taking the connected sum of sufficiently large even
  number of copies of~$J_i$.
\end{proof}

By Proposition~\ref{proposition:solution-for-iterated-satellite} and
Proposition~\ref{proposition:infection-knot-construction} (C3) above,
we obtain immediately the conclusion that the associated $J^i_n$ are
$(n)$-solvable knots which have vanishing PTFA $L^2$-signature
obstructions.  (Note that we do not need (C1) and (C2) for this
purpose.)

The remaining part of this section is devoted to the proof that any
nontrivial linear combination $J=\mathop{\#}_i^{\vphantom{i}}
a_i^{\vphantom{i}} J^i_n$ is not integrally $(n.5)$-solvable.  This
consists of two parts.  In the first part, assuming that $J$ is
integrally $(n.5)$-solvable, we construct a 4-manifold $W_0$ from an
integral $(n.5)$-solution of $J$, which has $M(J^i_0)$ as a boundary
component for some fixed $i$ for which $a_i\ne 0$.
In the second part, using a non-PTFA mixed-coefficient commutator quotient of
$\pi_1(W_n)$ as a coefficient system, we apply our new obstruction
(Theorem~\ref{theorem:n.5-solvable-obstruction-amenable-in-D(R)}) in
order to derive a contradiction.  The first part is similar to
arguments in \cite{Cochran-Harvey-Leidy:2009-1} but we make several
technical simplifications by taking advantages of our new obstruction.
(See also Remark~\ref{remark:advantage-of-non-PTFA}.)

\subsubsection*{Proof of non-$(n.5)$-solvability, Part I: Construction
  of bounding 4-manifolds}

Suppose a (finite) linear combination $J=\mathop{\#}_i^{\vphantom{i}}
a_i^{\vphantom{i}} J^i_n$ is integrally $(n.5)$-solvable.  Dropping
terms with zero coefficient, we may assume $a_i\ne 0$ for each~$i$.
Furthermore we may assume that $a_1>0$ by taking concordance inverses
if necessary.  We use the following building blocks:

\begin{enumerate}
\item Let $V$ be an integral $(n.5)$-solution for $M(J)$.
\item Let $V_i$ be the $(n)$-solution of $M(J^i_n)$ given in
  Proposition~\ref{proposition:solution-for-iterated-satellite}.
\item Let $E_k$ be the standard cobordism from $M(J^1_k) \cup M(K_k)$
  to $M(J^1_{k+1})$, viewing $J^1_{k+1}$ as $K_k(\eta_k,J^1_k)$.
  Namely, $E_k = (M(K_k) \times[0,1]) \cup (M(J^1_k) \times [0,1])
  \mathbin{/} \sim$ where $N(\eta_k)$ in $M(K_k)=M(K_k)\times 0$ is
  identified with the solid torus $M(J^1_k)-X(J^1_k)$ in
  $M(J^1_k)=M(J^1_k)\times 0$ in such a way that a zero-linking
  longitude of $\eta_k$ is identified with a meridian of~$J^1_k$.  For
  more discussions on $E_k$, see
  \cite[p.~1429]{Cochran-Harvey-Leidy:2009-1}.
\item Let $C$ be the standard cobordism from $\bigcup_i a_i M(J^i_n)$
  to $M(J)$ obtained by glueing copies of $M(J^i_n)\times[0,1]$
  similarly to (3), viewing the connected sum as a satellite knot
  formed along meridian curves.  (Or, for an alternative description,
  see \cite[p.~113]{Cochran-Orr-Teichner:2002-1}.)
\end{enumerate}


Let $\epsilon_i=a_i/|a_i|$, $b_1=a_1-1$ ($\ge 0$)
and $b_i=|a_i|$ for $i\ge 2$.  For $r=1,\ldots, b_i$, let $V_{i,r}$
be a copy of $-\epsilon_i V_i$.
Define
\[
W_n=\Big(\bigcup_{i} \bigcup_{r=1}^{b_i} V_{i,r}\Big)
\amalgover{\bigcup_{i} b_i M(J^i_n)} C \amalgover{M(J)} V.
\]
Note that $\partial W_n=M(J^1_n)$.  For $k=n-1,n-2,\ldots,0$, define $W_k$ by
\begin{align*}
  W_k & = E_{k} \amalgover{M(J^1_{k+1})} E_{k+1} \amalgover{M(J^1_{k+2})}
  \cdots \amalgover{M(J^1_{n-1})} E_{n-1} \amalgover{M(J^1_{n})} W_{n}
  \\
  & = E_{k} \amalgover{M(J^1_{k+1})} W_{k+1}.
\end{align*}
We have $\partial W_k=M(J^1_k) \cup M(K_k) \cup \cdots \cup
M(K_{n-1})$ for $k<n$.  See Figure~\ref{figure:cobordism-W_k}.

\begin{figure}[h]
  \labellist
  \small\hair 0mm
  \pinlabel {$V$} at 160 15
  \pinlabel {$C$} at 160 60
  \pinlabel {$M(J)$} [r] at -2 40
  \pinlabel {$M(J^1_{n})$} [r] at -2 80
  \pinlabel {$M(J^1_{n-1})$} [r] at -2 120
  \pinlabel {$M(J^1_{k})$} [r] at -2 187
  \pinlabel {$M(K_k)$} [l] at 48 187
  \pinlabel {$M(K_{n-1})$} [l] at 100 130
  \pinlabel {$V_{i,r}$}  at 177 100
  \pinlabel {$V_{i,r}$}  at 290 100
  \pinlabel {$E_{n-1}$}  at 60 100
  \pinlabel {$E_{n-2}$}  at 38 138
  \pinlabel {$M(J^i_n)$} [r] at 258 83
  \endlabellist
  \begin{center}
    \noindent\kern4em\includegraphics[scale=.9]{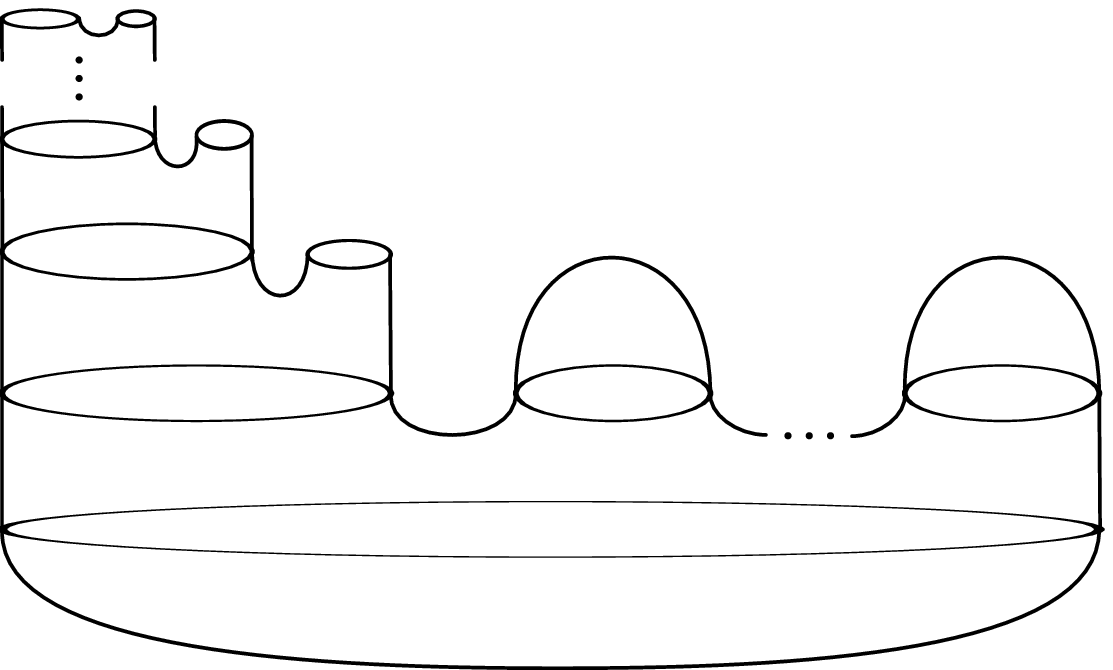}
    \caption{The cobordism $W_k$}
    \label{figure:cobordism-W_k}
  \end{center}
\end{figure}

From now on, we denote by $\cP$ the sequence $(R_0,R_1,\ldots,R_{n})$
where $R_k=\Q$ for $k< n$, $R_{n}=\Z_{p_1}$.  Then the mixed-coefficient
commutator series $\cP^k G$ of a group $G$ is defined for
$k=0,1,\ldots,n+1$ as in
Definition~\ref{definition:mixed-coefficient-commutator-series}.  We have
$G^{(k)} \subset \cP^k G$.

\begin{theorem}
  \label{theorem:nontriviality-of-coefficient-system}
  For $k=0,1,\ldots, n$, the homomorphism
  \[
  \phi_k\colon \pi_1(W_k) \to \pi_1(W_k)/\cP^{n-k+1}\pi_1(W_k).
  \]
  sends a meridian of $J^1_k$ into the abelian subgroup
  $\cP^{n-k}\pi_1(W_k)/\cP^{n-k+1}\pi_1(W_k)$.  Furthermore, the image
  of a meridian of $J^1_k$ under $\phi_k$ has order $p_1$ if $k=0$,
  and has order $\infty$ if $k>0$.
\end{theorem}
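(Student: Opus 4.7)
The plan is to prove both the depth and the order assertions by downward induction on $k$ from $k=n$ to $k=0$. For the base case $k=n$, the depth claim is vacuous since $\cP^0\pi_1(W_n)=\pi_1(W_n)$; for the order claim I must show $[\mu_{J^1_n}]$ has infinite order in $\pi_1(W_n)/\cP^1\pi_1(W_n)=H_1(W_n;\Q)$, because $R_0=\Q$. A Mayer--Vietoris calculation on $W_n=V\cup C\cup\bigsqcup_{i,r}V_{i,r}$ does this: each integral $(n)$- or $(n.5)$-solution piece contributes $H_1\cong\Z$ generated by its meridian, and the standard connected-sum cobordism $C$ identifies these meridians with $\mu_{J^1_n}$, so $H_1(W_n)\cong\Z$ is generated by $[\mu_{J^1_n}]$.

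For $k<n$, van Kampen applied to $W_k=E_k\cup_{M(J^1_{k+1})}W_{k+1}$ gives an amalgamated-product description of $\pi_1(W_k)$. The satellite construction $J^1_{k+1}=K_k(\eta_k,J^1_k)$ yields the identifications $\mu_{J^1_{k+1}}=\mu_{K_k}$ and $\mu_{J^1_k}=\ell_{\eta_k}$ in $\pi_1(W_k)$, where the $0$-linking longitude $\ell_{\eta_k}$ is freely homotopic to $\eta_k$ inside $M(K_k)$. By naturality of $\{\cP^i\}$ under the inclusion $\pi_1(W_{k+1})\to\pi_1(W_k)$ combined with the inductive hypothesis, $\mu_{K_k}\in\cP^{n-k-1}\pi_1(W_k)$ with infinite order in $\cP^{n-k-1}/\cP^{n-k}$.

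The heart of the inductive step is promoting $\eta_k$ one depth further, to $\cP^{n-k}\pi_1(W_k)$. Here I use the hypothesis that the classical Alexander module $H_1(M(K_k);\Z[t^{\pm1}])\cong\pi_1(M(K_k))^{(1)}/\pi_1(M(K_k))^{(2)}$ is cyclic with $[\eta_k]$ a generator as a $\Z[\mu_{K_k}^{\pm1}]$-module. Since $\mu_{K_k}$ already lies at depth $n-k-1$, the image of $\pi_1(M(K_k))$ in $\pi_1(W_k)/\cP^{n-k+1}$ is heavily constrained; adapting the PTFA propagation arguments of Cochran--Harvey--Leidy \cite{Cochran-Harvey-Leidy:2009-1} to the mixed-coefficient setting via Lemma~\ref{lemma:mixed-coefficient-commutator-series-is-amenable-in-D(R)}, one shows that the image of $\pi_1(M(K_k))^{(1)}$ in $\pi_1(W_k)/\cP^{n-k+1}$ lies in $\cP^{n-k}/\cP^{n-k+1}$; in particular $\eta_k\in\cP^{n-k}\pi_1(W_k)$.

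For the order assertion, if $k>0$ then $R_{n-k}=\Q$ so $\cP^{n-k}/\cP^{n-k+1}$ is a $\Q$-vector space and infinite order is equivalent to non-vanishing of $[\eta_k]$ in this quotient, which follows from the cyclic-generation argument together with the non-vanishing of $\Delta_{K_k}$. If $k=0$ the coefficient is $R_n=\Z_{p_1}$, and the prime $p_1$ was chosen larger than the top coefficient of $\Delta_{K_0}(t)$, so $\Delta_{K_0}$ survives reduction modulo $p_1$; the modulo-$p$ noncommutative Blanchfield pairing developed in Section~\ref{section:mod-p-Blanchfield-duality} then detects $\eta_0$ as a nonzero element of $\cP^n/\cP^{n+1}$, giving order exactly $p_1$. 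The principal obstacle is the depth-propagation in the third paragraph: converting the cyclic generation of the classical (abelian) Alexander module into a statement about the $(n-k)$-th term of the mixed-coefficient commutator series, uniformly in the $\Q$ and $\Z_p$ coefficients; the modulo-$p$ Blanchfield machinery is what makes this propagation tractable and is essential for pinning down the order at $k=0$.
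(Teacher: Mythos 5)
Your overall skeleton (downward induction from $k=n$, base case via a Mayer--Vietoris computation of $H_1(W_n)$, satellite identifications $\nu_k\sim\mu_{k+1}$ and $\mu_k\sim\eta_k$, and a Blanchfield argument at the bottom) points in the right direction, but the inductive step has a genuine gap, and the emphasis is inverted. The depth claim ($\mu_k\in\cP^{n-k}\pi_1(W_k)$) is the \emph{easy} part: since $\pi_1 M(K_k)$ is normally generated by $\nu_k=\mu_{k+1}$, which lies in $\cP^{n-k-1}\pi_1(W_k)$ by functoriality of $\cP^\bullet$ and the previous stage, and $[\eta_k]$ lies in the commutator subgroup of $\pi_1 M(K_k)$ (linking number zero), one gets $\mu_k=\eta_k\in[\cP^{n-k-1},\cP^{n-k-1}]\subset\cP^{n-k}\pi_1(W_k)$ by pure group theory --- no ``PTFA propagation'' and certainly no appeal to Lemma~\ref{lemma:mixed-coefficient-commutator-series-is-amenable-in-D(R)} is relevant; amenability and Strebel's class enter only when the signature obstruction is applied, not here. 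The paper handles this via the elementary Assertion~\ref{assertion:derived-series-quotients-of-W_k}.

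The real difficulty, which your proposal does not address for $k>0$, is showing that the image of $\mu_k=\eta_k$ is \emph{nontrivial} in $\cP^{n-k}\pi_1(W_k)/\cP^{n-k+1}\pi_1(W_k)$. Cyclic generation of the Alexander module and non-vanishing of $\Delta_{K_k}$ only say $[\eta_k]\ne 0$ in $H_1(M(K_k);R[t^{\pm1}])$; they cannot by themselves prevent the class from dying once it is pushed into the 4-manifold, and by the very definition of the mixed-coefficient series nontriviality in $\cP^{n-k}/\cP^{n-k+1}$ is equivalent to survival in $H_1(W_{k+1};R_{n-k}G)$ with $G=\pi_1(W_{k+1})/\cP^{n-k}\pi_1(W_{k+1})$. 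Ruling out that the class lies in $P=\Ker\{H_1(M(J^1_{k+1});RG)\to H_1(W_{k+1};RG)\}$ requires the full higher-order Blanchfield machinery at \emph{every} level, not just at $k=0$: one needs that $W_{k+1}$ is an $R$-coefficient $(n)$-bordism (Assertion~\ref{assertion:W_k-is-n-bordism}, via Lemma~\ref{lemma:integral-solution-is-mod-p-bordism}), that $G^{(n)}=\{e\}$ so Theorem~\ref{theorem:self-annihilating-submodule-blanchfield-form} gives $\Bl(P,P)=0$, and the coefficient-change Theorem~\ref{theorem:Blanchfield-change-of-coefficients} --- which uses the induction hypothesis that $\mu_{k+1}$ has infinite order, so $\Z\hookrightarrow G$ --- to identify $H_1(M(J^1_{k+1});RG)$ with $RG\otimes_{R[t^{\pm1}]}H_1(M(K_k);R[t^{\pm1}])$ and pull the vanishing back to the classical, nonsingular Blanchfield form, contradicting the generation hypothesis. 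Your plan invokes this mechanism only at $k=0$ (with $R=\Z_{p_1}$) and leaves the $\Q$-coefficient cases $k=1,\dots,n-1$ unsupported, so as written the induction does not close. (A minor additional slip: $\pi_1(W_n)/\cP^1\pi_1(W_n)$ is $H_1(W_n)/\mathrm{torsion}$, not $H_1(W_n;\Q)$, though this does not affect the base case.)
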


An immediate consequence of the first statement is that $\phi_k$
restricted on $\pi_1 M(J^1_k)$ is into
$\cP^{n-k}\pi_1(W_k)/\cP^{n-k+1}\pi_1(W_k)$.  Roughly speaking, the
first statement is essentially on the same lines with the arguments of
Lemma~\ref{lemma:infection-curve-depth-for-iterated-satellite} that
$[\eta_k]$ (which is identified with the meridian of $J_k$) is in the
$(n-k)$th term of the derived series.  The rational derived series
analogue of the second statement (for a similar but more complicated
4-manifold) was shown in
\cite[Section~8]{Cochran-Harvey-Leidy:2009-1}.  Our argument is
similar to their proof, but in order to handle the mixed-coefficient
commutator series which gives our non-PTFA coefficient systems, we
need to use a new ingredient, namely modulo $p$ higher-order
Blanchfield pairings.  We postpone related discussions and the proof
of Theorem~\ref{theorem:nontriviality-of-coefficient-system} to the
next section.

\subsubsection*{Proof of non-$(n.5)$-solvability, Part II: Computation
  of $L^2$-signature}

Now we consider the coefficient system $\phi_0\colon \pi_1(W_0) \to
G=\pi_1(W_0)/\cP^{n+1}\pi_1(W_0)$.  As an abuse of notation, we denote
by $\phi_0$ various restrictions of $\phi_0$, particularly the
restriction on~$\pi_1M(J^1_0)$.  Note that $G^{(n+1)}=\{e\}$ obviously,
and $G$ is amenable and lies in $D(\Z_{p_1})$ by
Lemma~\ref{lemma:mixed-coefficient-commutator-series-is-amenable-in-D(R)}.

For convenience, for a 4-manifold $X$ over $G$, we temporarily denote
the $L^2$-signature defect by $S_G(X)=\lsign_G(X)-\sign(X)$.  Since
$\partial W_0$ is the disjoint union of $M(J^1_0)$,
$M(K_0),\ldots,M(K_{n-1})$, we have
\[
S_G(W_0) = \rhot(M(J^1_0),\phi_0) + \sum_{k=0}^{n-1} \rhot(M(K_k),\phi_0)
\]
By Novikov additivity, we have
\[
S_G(W_0) = S_G(V) + S_G(C) + \sum_{i} \sum_{r=1}^{b_i}
S_G(V_{i,r}) + \sum_{k=0}^{n-1} S_G(E_k)
\]
Now we investigate the terms in right hand sides of the above two
equations:

\begin{enumerate}
\item $\rhot(M(J^1_0),\phi_0)=\sum_{r=0}^{p_1-1} \sigma_{J^1_0}(w_1^r)$.
  To verify this, note that the image of a meridian of $J^1_0$ under
  $\phi_0$ has order~$p_1$ by
  Theorem~\ref{theorem:nontriviality-of-coefficient-system}.  By
  Lemma~\ref{lemma:abelian-rho-invariant}, the claim follows.
\item $S_G(V)= 0$, $S_G(C)=0$, $S_G(E_k)=0$.  First, since $V$ is an
  integral $(n.5)$-solution for $M(J)$, $S_G(V)=\rhot(M(J),\phi_0)$
  vanishes by
  Theorem~\ref{theorem:n.5-solvable-obstruction-amenable-in-D(R)}.
  For the latter two claims, appeal to
  \cite[Lemma~2.4]{Cochran-Harvey-Leidy:2009-1} (see also
  \cite[Lemma~4.2]{Cochran-Orr-Teichner:2002-1}).
\item $S_G(V_{1,r})$ is either 0 or $-\sum_{r=0}^{p_1-1}
  \sigma_{J^1_0}(w_1^r)$, and $S_G(V_{i,r})=0$ for $i\ge 2$.  To show
  this, first apply
  Proposition~\ref{proposition:solution-for-iterated-satellite} to
  obtain
  \[
  S_G(V_{i,r})=-\epsilon_i\rhot(M(J_n^i),\phi_0)=-\epsilon_i
  \rhot(M(J_0^i),\alpha_{i,r})
  \]
  for some $\alpha_{i,r}\colon \pi_1 M(J_0^i) \to \Z_d$ where $d$ is
  the order of an element in~$G^{(n)}$.  Recall that $G^{(n)}$ is a
  subgroup of $\cP^{n}\pi_1(W_0)/\cP^{n+1}\pi_1(W_0)$ which is a
  vector space over~$\Z_{p_1}$.  It follows that $d$ is either $1$
  or~$p_1$.  Therefore $\rhot(M(J^i_0),\alpha_{i,r})$ is equal to
  either 0 or $\sum_{r=0}^{p_1-1} \sigma_{J^i_0}(w_1^r)$ by
  Lemma~\ref{lemma:abelian-rho-invariant}.  For $i=1$, since
  $\epsilon_1=1$, we obtain the conclusion.  For $i\ge 2$, the sum is
  zero since each summand vanishes by the condition~(C2).
\end{enumerate}

From the above facts, it follows that
\[
C\cdot \sum_{r=0}^{p_1-1} \sigma_{J^1_0}(w_1^r) = -\sum_{k=0}^{n-1}
\rhot(M(K_k),\phi_0)
\]
where $C\ge 1$.
But it contradicts the condition~(C1).  This shows that the linear
combination $J=\mathop{\#}_i^{\vphantom{i}}
  a_i^{\vphantom{i}} J^i_n$ is not integrally $(n.5)$-solvable.

\begin{remark}
  \label{remark:advantage-of-non-PTFA}
  In our construction above, the only requirement for the slice knots
  $K_k$ is that their classical Alexander modules are nontrivial and
  generated by~$\eta_k$.  Interestingly, while our examples are
  subtler in the knot concordance group than prior examples of
  \cite{Cochran-Harvey-Leidy:2009-1} which are constructed in a
  similar way but detected by PTFA $L^2$-signatures, the construction
  in \cite{Cochran-Harvey-Leidy:2009-1} need an additional significant
  restriction that certain $\rho$-invariants of the $M(K_k)$ mush
  vanish; see Step 1 of \cite[Theorem
  8.1]{Cochran-Harvey-Leidy:2009-1}.  This is essentially because our
  $L^2$-signatures are sharp enough to separate the contributions of
  the knots $J^i_0$ ($i>1$) from those of~$J^1_0$; see the above
  analysis of the terms $S_G(V_{i,r})$.  In case of PTFA
  $L^2$-signatures, it is unable to separate these contributions, and
  hence \cite{Cochran-Harvey-Leidy:2009-1} constructs more complicated
  4-manifolds (using more pieces called ``$\mathcal{R}$-caps'') and
  carries out a clever technical analysis using the additional
  assumption on the vanishing of certain $\rho$-invariants
  of~$M(K_k)$.  Our $L^2$-signature obstruction enables us not only to
  produce subtler examples but also to simplify such complications.
\end{remark}

\section{Modulo $p$ noncommutative Blanchfield pairing}
\label{section:mod-p-Blanchfield-duality}

In this section we introduce the modulo $p$ higher-order
noncommutative Blanchfield pairing on a 3-manifold $M$ (e.g., the
zero-surgery manifold of a knot).  We remark that a noncommutative
Blanchfield pairing first appeared in Duval's work~\cite{Duval:1986-1}
on boundary links, as a linking form over the group ring of a free
group.  Cochran-Orr-Teichner first introduced a noncommutative
Blanchfield pairing of knots, over the group ring $\Z G$ (and its
localizations) for PTFA groups~$G$~\cite{Cochran-Orr-Teichner:1999-1}.
For further development of the latter, see also work of
Cochran~\cite{Cochran:2002-1} and Leidy~\cite{Leidy:2006-1}.

In the first subsection, we define the modulo $p$ noncommutative
Blanchfield pairing and discuss modulo $p$ generalizations of various
known properties of the prior noncommutative Blanchfield pairings.  In
the second subsection, we discuss how the modulo $p$ version is
combined with techniques of
Cochran-Harvey-Leidy~\cite{Cochran-Harvey-Leidy:2009-1} to show the
nontriviality of certain non-PTFA coefficient systems obtained as
mixed-coefficient commutator series quotients, as asserted in
Theorem~\ref{theorem:nontriviality-of-coefficient-system}.

\subsection{Definition and properties}

Suppose $R$ is a fixed commutative ring with unity and $M$ is a closed
3-manifold endowed with a homomorphism $\phi\colon \pi_1(M) \to G$
satisfying the following:

\begin{enumerate}
\item[(BL1)] $RG$ is an Ore domain, namely, the quotient skew-field
  $\K=RG (RG-\{0\})^{-1}$ is well-defined.
\item[(BL2)] $H_1(M;\K)$ vanishes.
\end{enumerate}

Assuming these conditions, the noncommutative Blanchfield pairing can
be defined, following~\cite{Cochran-Orr-Teichner:1999-1}. We outline
the definition: from the short exact sequence $0 \to RG \to \K \to
\K/RG \to 0$, we obtain a Bockstein homomorphism
\[
H^1(M;\K/RG) \xrightarrow{\cong} H^2(M;RG)
\]
which is an isomorphism since $H^2(M;\K) = H_1(M;\K)=0$ by duality and
(BL2) and since $H^1(M;\K)=\Hom_\K(H_1(M;\K),\K)=0$ by the universal
coefficient theorem over the skew field~$\K$.  Composing the inverse
of the Bockstein homomorphism with the Poincare duality isomorphism
and the Kronecker evaluation map, we obtain the following:

\begin{proposition}[Essentially due to~\cite{Cochran-Orr-Teichner:1999-1}]
  \label{proposition:definition-Blanchfield-form}
  Under the assumptions (BL1), (BL2), there is a linking form
  \begin{multline*}
    \def\to{\rightarrow}
    \Bl\colon \overline{H_1(M;RG)} \to H^2(M;R G) \to H^1(M;\K/RG)
    \to \Hom_{R G} (H_1(M;R G), \K/R G).
  \end{multline*}
\end{proposition}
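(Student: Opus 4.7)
The construction is exactly the one sketched in the paragraph preceding the statement, and the plan is to make each arrow in the stated composition precise. The only novelty compared with \cite{Cochran-Orr-Teichner:1999-1} is that the coefficient ring $RG$ is now allowed to involve~$R=\Z_p$, but all arguments run formally in the same way because conditions (BL1) and (BL2) already isolate the features that were used in the PTFA case.

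First I would verify that the Bockstein coboundary
\[
\beta\colon H^1(M;\K/RG) \longrightarrow H^2(M;RG)
\]
coming from the short exact sequence $0\to RG\to \K\to \K/RG\to 0$ is an isomorphism. For this it suffices to show $H^1(M;\K)=0=H^2(M;\K)$. Since $\K$ is a skew field (by (BL1)), every $\K$-module is free, so the universal coefficient theorem collapses and gives $H^1(M;\K)\cong\Hom_\K(H_1(M;\K),\K)$, which is zero by (BL2). Poincar\'e duality for the closed oriented 3-manifold $M$ with $\K$-coefficients then yields $H^2(M;\K)\cong \overline{H_1(M;\K)}=0$ as well. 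Feeding these vanishings into the long exact sequence associated to $0\to RG\to \K\to\K/RG\to 0$ makes $\beta$ an isomorphism.

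Second I would apply Poincar\'e duality with twisted $RG$-coefficients to identify $H^2(M;RG)$ with $\overline{H_1(M;RG)}$, where the overline denotes the use of the involution $g\mapsto g^{-1}$ on $RG$ induced by inversion in~$G$; this is a standard capping with the fundamental class of~$M$. Then I would invoke the Kronecker evaluation map
\[
\kappa\colon H^1(M;\K/RG) \longrightarrow \Hom_{RG}(H_1(M;RG),\K/RG),
\]
which is the edge homomorphism of the universal coefficient spectral sequence and is always defined (though not in general an isomorphism when $RG$ fails to be a PID). Composing Poincar\'e duality with $\beta^{-1}$ and with $\kappa$ gives the desired map~$\Bl$.

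The main points that still need a brief justification, but which I would not grind out in full, are (a) the hermitian (sesquilinear) symmetry of $\Bl$, which follows from the symmetry properties of cup/cap product together with the naturality of the Bockstein, exactly as in~\cite{Cochran-Orr-Teichner:1999-1}; and (b) independence of the definition from the choices made in setting up cellular $RG$-coefficient (co)chain complexes, which is standard. I expect the only place where mild care is required is in checking that the universal coefficient arguments in Step~1 go through when $RG$ is not commutative: this works because the coefficient module $\K$ is a flat $RG$-bimodule (it is an Ore localization of $RG$) and is a free module over the skew field~$\K$ itself.
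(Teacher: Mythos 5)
Your proposal is correct and follows essentially the same route as the paper: the Bockstein $H^1(M;\K/RG)\to H^2(M;RG)$ is shown to be an isomorphism from the vanishing of $H^1(M;\K)$ (universal coefficients over the skew field $\K$ plus (BL2)) and of $H^2(M;\K)$ (duality), and $\Bl$ is then the composition of Poincar\'e duality, the inverse Bockstein, and the Kronecker evaluation map. The remarks you defer (hermitian symmetry, well-definedness) are likewise deferred by the paper to \cite{Cochran-Orr-Teichner:1999-1} and are not needed for the statement itself.
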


We often denote $\Bl(x)(y)$ by $\Bl(x,y)$, viewing it as a sesquilinear
pairing.  It can be shown that $\Bl$ is hermitian, i.e.,
$\Bl(x,y)=\overline{\Bl(y,x)}$~\cite{Cochran-Orr-Teichner:1999-1}.

It is known that (BL1) is satisfied for $R=\Q$ (or $\Z$) and any PTFA
group~$G$.  This case has been used in several recent work, most
notably in \cite{Cochran-Harvey-Leidy:2009-1}.  The following
observation enables us to define $\Bl$ when $R=\Z_p$ ($p$ prime) as
well:

\begin{lemma}
  \label{lemma:mod-p-PTFA-group-ring-is-ore-domain}
  If $G$ is PTFA, then $\Z_p G$ is an Ore domain.  Consequently, there
  exists the Ore localization $\K=\Z_pG (\Z_pG-\{0\})^{-1}$, which is
  a skew-field containing $\Z_p G$.
\end{lemma}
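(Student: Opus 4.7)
The plan is to adapt, essentially verbatim, the classical argument (due to Passman and others) showing that $\Q G$ is an Ore domain whenever $G$ is PTFA; the only role of $\Q$ in that argument is as a field, and replacing it by $\Z_p$ throughout works without modification. I induct on the length $r$ of a subnormal series $G = G_0 \rhd G_1 \rhd \cdots \rhd G_r = \{e\}$ with torsion-free abelian quotients. The base case $r = 0$ gives $\Z_p G = \Z_p$, a field.

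For the inductive step, set $N = G_1$, which is PTFA of length $r-1$. By induction $\Z_p N$ is an Ore domain with skew field of fractions~$K$. The main body of the proof consists of three steps. First, show that $S = \Z_p N \setminus \{0\}$ is an Ore set in $\Z_p G$: since $N$ is normal, conjugation by $G$ preserves $S$, and the Ore condition for $S$ in $\Z_p G$ reduces to a one-sided bookkeeping check on finite sums indexed by coset representatives of $G/N$. The resulting localization $R := \Z_p G \cdot S^{-1}$ is naturally identified with the crossed product $K * (G/N)$. Second, show that $R$ is an Ore domain: if $G/N \cong \Z^n$ is finitely generated, then $R$ is an iterated twisted Laurent polynomial ring over $K$, hence a Noetherian domain and so Ore; for general torsion-free abelian $G/N$, write $G/N$ as a directed union of finitely generated subgroups $A_\lambda$, so that $R = \varinjlim (K * A_\lambda)$ is a directed union of Ore subdomains, and observe that the Ore condition and the no-zero-divisors condition each involve only finitely many elements at a time, so both ascend to the directed union. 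Third, conclude the lemma: $\Z_p G \subset R$ is a subring of a domain, hence a domain; and the Ore property for $\Z_p G$ itself follows by taking any fraction $y^{-1}x$ in the skew field of fractions of $R$, expressing it in $R = \Z_p G \cdot S^{-1}$, and clearing the $S$-denominator inside $\Z_p G$ using the Ore condition for $S$ established in Step~1.

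The main obstacle, to the extent there is any, is the directed-limit step when $G/N$ is not finitely generated, which must be handled because the torsion-free abelian quotients arising in PTFA towers used elsewhere in the paper need not be finitely generated; however, this step is routine once phrased in terms of directed unions of rings. All other inputs (torsion-freeness of the quotients together with the crossed product description forcing no zero divisors at each stage, skew Laurent polynomial extensions of a skew field by $\Z$ being Noetherian Ore domains, and the Ore property descending from $R$ to $\Z_p G$ along the localization) are standard, and the whole proof could alternatively be given as a short citation of Passman's treatment of group rings of soluble groups, with $\Z_p$ substituted for $\Q$.
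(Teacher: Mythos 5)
Your argument is correct, but it takes a different route from the paper. The paper disposes of the lemma in two sentences by citation: since $\Z_p$ is a field and $G$ is PTFA, $\Z_p G$ has no zero divisors by Bovdi's theorem (Passman, p.~592), and since $G$ is solvable, a domain group ring is an Ore domain by Lewin's theorem (Passman, p.~611). Your proposal instead reproves these facts in the PTFA case by induction on the length of the subnormal series: localize at $S=\Z_p N\setminus\{0\}$ for $N=G_1$, identify the localization with a crossed product $K*(G/N)$, treat finitely generated quotients via iterated skew Laurent extensions (Noetherian, hence Ore by Goldie), pass to arbitrary torsion-free abelian quotients by a directed union, and then descend the Ore property back to $\Z_p G$ by clearing denominators. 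This is essentially Lewin's argument specialized to PTFA groups, and it is sound; one small point you leave implicit is that before forming $\Z_p G\cdot S^{-1}$ you must know the elements of $S$ are regular in $\Z_p G$ (otherwise the conclusion ``$\Z_p G$ embeds in a domain'' would be circular), but this follows at once from the inductive hypothesis that $\Z_p N$ is a domain together with the freeness of $\Z_p G$ as a $\Z_p N$-module and the $G$-invariance of $S$, so it is an omission of detail rather than a gap. The trade-off: the paper's citation is shorter and highlights that the only input needed in passing from $\Q$ to $\Z_p$ is that the coefficient ring is a field, whereas your self-contained induction makes explicit where torsion-freeness of the quotients, the crossed-product structure, and the possibly non--finitely-generated quotients (handled by your direct-limit step) actually enter.
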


\begin{proof}
  Since $G$ is PTFA and $\Z_p$ is a field, $\Z_pG$ has no zero divisor
  by Bovdi's theorem (see \cite[p.~592]{Passman:1977-1}).  Since $G$
  is solvable, it follows that $\Z_pG$ is an Ore domain, by Lewin's
  result~\cite{Lewin:1972-1} (see also \cite[p.~611]{Passman:1977-1}).
\end{proof}

The main example to keep in mind is: suppose $M$ is the zero-surgery
manifold of a knot $K$ in~$S^3$ endowed with a nontrivial PTFA
coefficient system $\phi\colon \pi_1(M) \to G$.  Then, it is known
that (BL1) and (BL2) are satisfied for
$R=\Q$~\cite{Cochran-Orr-Teichner:1999-1}.  For $R=\Z_p$, (BL1) is
satisfied by Lemma~\ref{lemma:mod-p-PTFA-group-ring-is-ore-domain},
and (BL2) is satisfied by appealing to the following mod $p$ analogue
of \cite[Proposition~2.11]{Cochran-Orr-Teichner:1999-1}.  Its proof is
identical with that of \cite[Proposition
2.11]{Cochran-Orr-Teichner:1999-1} and therefore omitted.

\begin{lemma}
  \label{lemma:1st-betti-number-over-skew-field}
  Suppose $G$ is PTFA, $R=\Z_p$ or $\Q$, $\K$ is the quotient
  skew-field of $RG$, $X$ is a finite CW complex, and $\phi\colon
  \pi_1(X) \to G$ is a nontrivial homomorphism.  Then $\dim_\K
  H_0(X;\K)=0$ and $\dim_\K H_1(X;\K) \le \dim_{R} H_1(X;R) -1$.
\end{lemma}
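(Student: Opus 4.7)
\medskip

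The plan is to model the proof on the standard cellular argument used by Cochran--Orr--Teichner, replacing their rational group ring with $RG$ for $R=\Z_p$ or $\Q$. Since $\K$ is obtained by Ore localization of $RG$ (using Lemma~\ref{lemma:mod-p-PTFA-group-ring-is-ore-domain} in the $\Z_p$ case and the classical PTFA fact for $R=\Q$), it is flat over $RG$, so $H_*(X;\K)=\K\otimes_{RG} H_*(C_*(X;RG))$, and I may compute using the cellular chain complex of the universal cover tensored up to~$RG$. I will first assume $X$ is connected (the basepoint component is the only one on which $\phi$ acts meaningfully) and then collapse a spanning tree of the $1$-skeleton, so that $X$ has a single $0$-cell~$v$; this operation is a homotopy equivalence and hence preserves all homologies in question. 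Let $n_i$ denote the number of $i$-cells of the resulting complex, so $C_i(X;RG)=(RG)^{n_i}$, and let $\partial_i$ be the boundary matrices; write $\overline{\partial}_i$ for the matrices over $R$ obtained by applying the augmentation $RG\to R$, $g\mapsto 1$. These $\overline{\partial}_i$ are exactly the boundary maps of $C_*(X;R)$.

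For $H_0(X;\K)$, the key point is that with a single $0$-cell, each $1$-cell is a loop~$\gamma_i$ at~$v$, so $\partial_1(e_i)=\phi(\gamma_i)-1$. Because $\phi$ is nontrivial, at least one $\phi(\gamma_i)$ differs from $1$, hence $\phi(\gamma_i)-1$ is a nonzero element of $RG$ and therefore a unit in the skew-field~$\K$. Thus $\Im \partial_1$ contains a unit and $H_0(X;\K)=\K/\Im\partial_1=0$. Simultaneously, $\overline{\partial}_1=0$ over $R$, since each loop contributes $v-v=0$.

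For $H_1$, the displayed identities become
\begin{align*}
\dim_\K H_1(X;\K)   &= \dim_\K \ker\partial_1 - \operatorname{rank}_\K\partial_2 = (n_1-1)-\operatorname{rank}_\K\partial_2, \\
\dim_R H_1(X;R) &= \dim_R \ker\overline{\partial}_1 - \operatorname{rank}_R \overline{\partial}_2 = n_1 - \operatorname{rank}_R\overline{\partial}_2,
\end{align*}
so the desired inequality $\dim_\K H_1(X;\K)\le \dim_R H_1(X;R)-1$ reduces to the rank comparison
\[
\operatorname{rank}_\K \partial_2 \;\ge\; \operatorname{rank}_R \overline{\partial}_2.
\]
This I will establish by the standard minor argument: if $\operatorname{rank}_R \overline{\partial}_2=r$, then some $r\times r$ minor of $\overline{\partial}_2$ has nonzero determinant in~$R$. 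That determinant is the image under augmentation of the corresponding minor of~$\partial_2$; since $R$ is a field and the augmentation of a nonzero element is permitted to be nonzero, this forces the $RG$-minor of $\partial_2$ to be nonzero in $RG$, hence a unit in~$\K$, so $\operatorname{rank}_\K \partial_2\ge r$.

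The only genuine subtlety is the availability of the quotient skew-field~$\K$ when $R=\Z_p$, and this is handled by Lemma~\ref{lemma:mod-p-PTFA-group-ring-is-ore-domain} (using Bovdi's theorem and Lewin's theorem for solvable groups). Everything else is a clean transcription of the $R=\Q$ argument, because the minor argument and the triviality of $H_0(X;\K)$ require only that $R$ be a field, $RG$ be an Ore domain with no zero divisors, and $\phi$ be nontrivial. I expect no further obstacles, and this is precisely why the authors regard the proof as identical to the one for $R=\Q$ in~\cite{Cochran-Orr-Teichner:1999-1}.
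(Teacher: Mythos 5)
Your reduction is fine up to the last step: collapsing to one $0$-cell, computing $H_0(X;\K)=0$ from the unit $\phi(\gamma_i)-1\in\K$, and reducing the Betti number inequality to $\operatorname{rank}_\K\partial_2\ge\operatorname{rank}_R\overline{\partial}_2$ is exactly the standard Cochran--Orr--Teichner argument that the paper's (omitted) proof follows. But the justification you give for that rank inequality is where the proof breaks: $RG$ is a \emph{noncommutative} ring (and $\K$ a skew field), so an $r\times r$ submatrix of $\partial_2$ has no determinant, and no determinant-like substitute (e.g.\ the Dieudonn\'e determinant over $\K$) is compatible with the augmentation $RG\to R$ --- note that elements such as $g-1$ are units in $\K$ yet augment to $0$, so there is no map through which ``the minor of $\overline{\partial}_2$ is the augmentation of the minor of $\partial_2$'' could even be interpreted. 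Your closing claim that the argument needs only ``$R$ a field and $RG$ an Ore domain'' is therefore not right in spirit either: the inequality $\operatorname{rank}_\K A\ge\operatorname{rank}_R\epsilon(A)$ is essentially Strebel's $D(R)$-condition for $G$, and it is precisely the PTFA hypothesis (via Strebel's theorem that PTFA groups lie in $D(R)$ for every $R$, as quoted after Lemma~\ref{lemma:poly-mixed-groups}) that makes it true; the Ore condition alone only gives you the skew field to state it in.

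The gap is repairable with tools already in the paper. If $\operatorname{rank}_R\overline{\partial}_2=r$, choose an $r\times r$ submatrix $B$ of $\partial_2$ whose augmentation $\epsilon(B)$ is invertible over $R$; then $B\colon (RG)^r\to(RG)^r$ is injective because $G\in D(R)$, hence $1_\K\otimes B$ is injective by flatness of the Ore localization, hence invertible over the skew field $\K$ (a square matrix over a skew field that is injective is invertible by dimension count), so $\operatorname{rank}_\K\partial_2\ge r$. Equivalently, apply Lemma~\ref{lemma:lemma:rank-of-image-D(R)} to $f=\partial_2$ (with $C_1$ free, hence projective) and use that a free $RG$-submodule $(RG)^r\subset\Im\partial_2$ survives Ore localization to give $\dim_\K\Im\{1_\K\otimes\partial_2\}\ge r$. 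With that substitution (and the tacit assumption, which you should state, that $X$ is connected --- otherwise $H_0(X;\K)=0$ is false), your argument goes through for both $R=\Q$ and $R=\Z_p$, the latter using Lemma~\ref{lemma:mod-p-PTFA-group-ring-is-ore-domain} exactly as you say.
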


If $RG$ is a PID, then $\K/RG$ is divisible and so injective
over~$RG$.  Thus the evaluation map in the definition of $\Bl$ is an
isomorphism.  It follows that $\Bl$ is nonsingular.  For example, this
applies to the case of $G=\Z$ and $R=\Q$ or~$\Z_p$.



Many prior known results on the noncommutative Blanchfield pairing
over $\Q$ also hold for our new setup, namely for $R=\Z_p$.  In what
follows we discuss some of such results
(Theorems~\ref{theorem:Blanchfield-change-of-coefficients}
and~\ref{theorem:self-annihilating-submodule-blanchfield-form}) that
we need.  For these results, since the proofs for $R=\Q$ in the
literature can be carried out for $R=\Z_p$ as well (in many cases we
need Lemma~\ref{lemma:1st-betti-number-over-skew-field} instead of
\cite[Proposition 2.11]{Cochran-Orr-Teichner:1999-1}), we will give
references for the $R=\Q$ case, without proofs for the $R=\Z_p$ case.

First, the following result concerns the effect of coefficient change.

\begin{theorem}
  [{\cite[Theorem~4.7]{Leidy:2006-1},
    \cite[Theorem~5.16]{Cha:2003-1}, \cite[Lemma~6.5,
    Theorem~6.6]{Cochran-Harvey-Leidy:2009-1} for $R=\Q$}]
  \label{theorem:Blanchfield-change-of-coefficients}
  Suppose $R=\Q$ or $\Z_p$, $\phi\colon\pi_1(M)\to G$ is a
  homomorphism and $\iota\colon G\to \Gamma$ is an injection such that
  both $(M,\phi)$ and $(M,\iota\phi)$ satisfy (BL1, BL2).  Let $\K$
  and $\K'$ be the quotient skew-fields of $RG$ and $R\Gamma$, and
  $\Bl$ and $\Bl'$ be the Blanchfield forms on $H_1(M;RG)$ and
  $H_1(M;R\Gamma)$, respectively.  If $RG$ is a PID, then
  \begin{enumerate}
  \item The map $\iota$ induces an injection $\iota_*\colon \K/RG \to
    \K'/R\Gamma$.
  \item $H_1(M;R\Gamma) = R\Gamma\otimes_{RG} H_1(M;RG)$ and
    $\Bl'(1\otimes x, 1\otimes y)=\iota_*\Bl(x,y)$.
  \end{enumerate}
\end{theorem}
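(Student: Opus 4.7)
My plan is to establish the two conclusions in turn.

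For part~(1), the first step is to extend $\iota$ to a ring homomorphism of the skew-fields. Since $\iota\colon RG\to R\Gamma$ sends nonzero elements of the domain $RG$ to nonzero elements of the domain $R\Gamma$, which are invertible in $\K'$, the universal property of Ore localization (BL1) produces a unique ring map $\iota_*\colon\K\to\K'$ extending $\iota$, automatically injective because $\K$ is a skew-field. To show that the induced map $\iota_*\colon\K/RG\to\K'/R\Gamma$ is injective, it suffices to prove $\iota_*^{-1}(R\Gamma)=RG$ inside $\K$. I would choose right coset representatives $\{g_i\}_{i\in I}$ of $G$ in $\Gamma$ with $g_0=e$, producing a decomposition $R\Gamma=\bigoplus_{i\in I}RG\cdot g_i$ as a free \emph{left} $RG$-module. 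Writing $a=s^{-1}r\in\K$ with $r,s\in RG$ and $s\ne 0$, and expanding $\iota_*(a)=\sum_i b_i g_i\in R\Gamma$, the identity $\iota(s)\iota_*(a)=\iota(r)$ decomposes summand by summand as $s\,b_i=0$ for $i\ne 0$ and $s\,b_0=r$. Since $RG$ is a domain and $s\ne 0$, this forces $b_i=0$ for $i\ne 0$, so $\iota_*(a)=b_0\in RG$ and therefore $a\in RG$.

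For part~(2), the same coset decomposition shows that $R\Gamma$ is free, hence flat, as a (left or right) $RG$-module. Taking a finite free $R\pi_1(M)$-chain complex $C_*$ for~$M$, we have $C_*(M;R\Gamma)=R\Gamma\otimes_{RG}C_*(M;RG)$ at the chain level, and flatness yields $H_1(M;R\Gamma)=R\Gamma\otimes_{RG}H_1(M;RG)$. For the pairing identity $\Bl'(1\otimes x,1\otimes y)=\iota_*\Bl(x,y)$, I would trace $x,y$ through the four defining maps of $\Bl$ (conjugation, Poincar\'e duality, inverse Bockstein, Kronecker evaluation), using naturality of each map with respect to the commutative diagram of coefficient short exact sequences
\[
\xymatrix@C=1.2em@R=1.3em{
0\ar[r] & RG\ar[d]\ar[r] & \K\ar[d]\ar[r] & \K/RG\ar[d]\ar[r] & 0\\
0\ar[r] & R\Gamma\ar[r] & \K'\ar[r] & \K'/R\Gamma\ar[r] & 0
}
\]
whose right vertical arrow is injective by~(1). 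The same diagram chase, carried out for $R=\Q$ in the cited references, applies verbatim for $R=\Z_p$.

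The main obstacle I expect is the two-sided module bookkeeping in part~(1): $R\Gamma$ decomposes along cosets only as a one-sided $RG$-module, while $\iota_*$ interacts with the full bimodule structure of $R\Gamma$, so one must match the side of the coset decomposition consistently with the side on which the Ore fraction representing $a$ is written (right cosets with left fractions $s^{-1}r$, or symmetrically). Once that is done cleanly, the naturality argument in part~(2) is mechanical, since every ingredient in the construction of $\Bl$ is functorial in the coefficient system.
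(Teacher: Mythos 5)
Your argument is correct, but it is worth noting that the paper itself does not prove this theorem: it is stated with references to Leidy, Cha, and Cochran--Harvey--Leidy for $R=\Q$, together with the remark that those proofs go through verbatim for $R=\Z_p$ (substituting the mod $p$ Ore-domain and Betti-number lemmas, Lemmas \ref{lemma:mod-p-PTFA-group-ring-is-ore-domain} and \ref{lemma:1st-betti-number-over-skew-field}, for the corresponding $\Q$-coefficient facts of Cochran--Orr--Teichner). What you supply is essentially the standard argument underlying those references, made self-contained: freeness of $R\Gamma$ over $RG$ via coset representatives gives both the injectivity $\iota_*^{-1}(R\Gamma)=RG$ in (1) and the flatness needed for $H_1(M;R\Gamma)=R\Gamma\otimes_{RG}H_1(M;RG)$, and the pairing identity follows from naturality of conjugation, Poincar\'e duality, the Bockstein of the map of coefficient sequences, and Kronecker evaluation. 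Two small remarks. First, the side-of-fractions issue you flag is genuinely harmless: the involution $g\mapsto g^{-1}$ of $RG$ interchanges the left and right Ore conditions, so every element of $\K$ is both a left and a right fraction, and you can match right cosets with left fractions (or left cosets with right fractions) as you indicate; for the evaluation step one should also record the natural map $R\Gamma\otimes_{RG}(\K/RG)\to\K'/R\Gamma$, $\gamma\otimes q\mapsto\gamma\,\iota_*(q)$, which is what turns an $RG$-valued functional into an $R\Gamma$-valued one. Second, your proof never uses the hypothesis that $RG$ is a PID; that is not a gap --- the statement remains true under your weaker hypotheses, and the PID condition (which holds in the paper's application, where $G=\Z$) is carried in the statement because the cited formulations require it --- but you should say explicitly that you are proving a slightly more general assertion rather than silently dropping a hypothesis.
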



Next we introduce an $R$-coefficient homology version of
Cochran-Harvey-Leidy's notion of a rational $(n)$-bordism
\cite[Definition 5.2]{Cochran-Harvey-Leidy:2009-1}.

\begin{definition}
  \label{definition:integral-n-bordism}
  For $R=\Q$ or $\Z_p$, a compact 4-manifold $W$ with $\pi=\pi_1(W)$
  is called an \emph{$R$-coefficient $(n)$-bordism} if there are
  elements $x_1,\ldots,x_r,y_1,\ldots,y_r \in H_2(W;R[\pi/\pi^{(n)}])$
  such that $2r=\dim_R \Coker\{H_2(\partial W;R)\to H_2(W;R)\}$ and
  the $R[\pi/\pi^{(n)}]$-coefficient intersection form $\lambda_n$ on
  $H_2(W;R[\pi/\pi^{(n)}])$ satisfies $\lambda_n(x_i,x_j)=0$ and
  $\lambda_n(x_i,y_j)=\delta_{ij}$.  We call $\{x_i\}$ and $\{y_j\}$
  an \emph{$(n)$-langrangian} and its \emph{$(n)$-dual}, respectively.
\end{definition}



\begin{theorem}
  [{\cite[Section 5, 6,
    Theorem~6.3]{Cochran-Harvey-Leidy:2009-1} for $R=\Q$}]
  \label{theorem:self-annihilating-submodule-blanchfield-form}
  Suppose $R=\Q$ or $\Z_p$, $W$ is an $R$-coefficient $(n)$-bordism
  and each boundary component $\partial_i W$ of $W$ satisfies
  $b_1(\partial_i W;R)=1$.  Suppose $M$ is a boundary component of $W$
  endowed with a nontrivial homomorphism $\phi\colon\pi_1(M)\to G$
  into a PTFA group $G$, so that the Blanchfield pairing $\Bl$ on
  $H_1(M;RG)$ is defined.  If $\phi$ extends to $\pi_1(W)$ and
  $G^{(n)}=\{e\}$, then $P=\Ker\{ H_1(M;RG) \to H_1(W;RG)\}$ satisfies
  $\Bl(P,P)=0$.
\end{theorem}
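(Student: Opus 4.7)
The strategy is to adapt the proof of the rational-coefficient case \cite[Theorem~6.3]{Cochran-Harvey-Leidy:2009-1} to $R \in \{\Q, \Z_p\}$. The only new ingredients required for $R = \Z_p$ are Lemma~\ref{lemma:mod-p-PTFA-group-ring-is-ore-domain} (which ensures the Ore skew-field $\K$ of $\Z_p G$ exists for PTFA $G$) and Lemma~\ref{lemma:1st-betti-number-over-skew-field} (which provides the same $\dim_\K H_1$ bound as in the rational case), so the argument transfers essentially verbatim.

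Since $G^{(n)} = \{e\}$, the map $\pi_1(W) \to G$ factors through $\pi_1(W)/\pi_1(W)^{(n)}$, and the $(n)$-lagrangian $\{x_i\}$ and $(n)$-dual $\{y_j\}$ push forward along $R[\pi_1(W)/\pi_1(W)^{(n)}] \to RG$ to $\tilde x_i, \tilde y_j \in H_2(W;RG)$ still satisfying $\lambda(\tilde x_i, \tilde x_j) = 0$ and $\lambda(\tilde x_i, \tilde y_j) = \delta_{ij}$. By the long exact sequence of $(W,M)$, $P$ equals the image of the connecting map $\partial \colon H_2(W,M;RG) \to H_1(M;RG)$. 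Since $\phi|_M$ is nontrivial and $b_1(M;R) = 1$, Lemma~\ref{lemma:1st-betti-number-over-skew-field} gives $H_1(M;\K) = 0$, so every class $\tilde x \in H_2(W,M;RG)$ lifting an $x \in P$ further lifts over $\K$ to some $\hat x \in H_2(W;\K)$. A standard Bockstein-and-duality unraveling of the definition of $\Bl$ then produces the geometric formula
\[
\Bl(x,y) \;=\; \lambda_\K(\hat x, \tilde y) \bmod RG \;\in\; \K/RG
\]
for any $x,y \in P$ with relative lifts $\tilde x,\tilde y \in H_2(W,M;RG)$ and absolute $\K$-lift $\hat x \in H_2(W;\K)$ of $\tilde x$.

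The remaining task is to show that $\lambda_\K(\hat x, \tilde y) \in RG$. For this, apply Lemma~\ref{lemma:1st-betti-number-over-skew-field} to every boundary component (using $b_1(\partial_i W; R) = 1$) together with Poincar\'e--Lefschetz duality for $(W,\partial W)$ and the equality of ordinary and $\K$-Euler characteristics to pin down $\dim_\K H_2(W;\K)$. A dimension count then shows that the $\{\tilde x_i\}$ span a lagrangian in $H_2(W;\K)$ modulo the image of $H_2(\partial W;\K)$; hence any $\hat x$ lifting an element of $P$ decomposes, modulo this boundary image, as a $\K$-linear combination of the $\tilde x_i$. Combining this with $\lambda(\tilde x_i, \tilde x_j) = 0$ and absorbing the boundary contribution into $RG$ yields $\lambda_\K(\hat x, \tilde y) \in RG$, i.e., $\Bl(x,y) = 0$. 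The principal obstacle is this final dimension count: it requires careful tracking of boundary components on which $\phi$ may restrict trivially, and verification that the Cochran--Harvey--Leidy book-keeping indeed goes through over $\Z_p$ once the input ingredients (Ore localization and the first Betti bound) are replaced by their $\Z_p$ analogues.
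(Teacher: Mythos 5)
Your proposal takes essentially the same route as the paper: the paper gives no independent proof of Theorem~\ref{theorem:self-annihilating-submodule-blanchfield-form}, stating only that the $R=\Q$ argument of \cite[Sections 5--6, Theorem~6.3]{Cochran-Harvey-Leidy:2009-1} carries over to $R=\Z_p$ once Lemma~\ref{lemma:mod-p-PTFA-group-ring-is-ore-domain} provides the Ore skew-field and Lemma~\ref{lemma:1st-betti-number-over-skew-field} is used in place of \cite[Proposition~2.11]{Cochran-Orr-Teichner:1999-1}---exactly the two substitutions you identify. Your outline of the Cochran--Harvey--Leidy argument is consistent with that reference (which the paper does not reproduce), so nothing remains beyond the bookkeeping you already flag as deferred to their proof.
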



Obviously an integral $(n)$-solution is an integral ($\Z$-coefficient)
$(n)$-bordism.  We remark that an integral $(n)$-bordism is not
necessarily a $\Z_p$-coefficient $(n)$-bordism, since the $\Z_p$-rank
of $H_*(-;\Z_p)$ may be greater than the $\Z$-rank of $H_*(-;\Z)$.
However, the following observation says that, for example, an integral
$(n)$-solution for a knot in $S^3$ is a $\Z_p$-coefficient
$(n)$-bordism for any~$p$.

\begin{lemma}
  \label{lemma:integral-solution-is-mod-p-bordism}
  Suppose $W$ is an integral $(n)$-solution for a 3-manifold $M$ and
  $H_1(M)$ has no $p$-torsion.  Then
  \[
  \dim_{\Z_p} \Coker\{H_2(M;\Z_p)\rightarrow H_2(W;\Z_p)\} =
  b_2(W;\Z_p)=b_2(W)
  \]
  for any~$p$.  Consequently, $W$ is a $\Z_p$-coefficient $(n)$-bordism.
\end{lemma}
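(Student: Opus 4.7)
The plan is to assemble the proof from three ingredients already in place: Lemma~\ref{lemma:mod-p-betti-number-of-solution}, the long exact sequence of $(W,M)$ with $\Z_p$ coefficients, and Poincar\'e–Lefschetz duality. First I would verify that Lemma~\ref{lemma:mod-p-betti-number-of-solution} applies. Since $W$ is an integral $(n)$-solution, the inclusion $M \hookrightarrow W$ induces an isomorphism $H_1(M) \cong H_1(W)$, so $\Coker\{H_1(M) \to H_1(W)\} = 0$ trivially has no $p$-torsion, and $H_1(W) \cong H_1(M)$ has no $p$-torsion by hypothesis. Hence $b_i(W) = b_i(W;\Z_p)$ for all~$i$, which already gives the second equality $b_2(W;\Z_p) = b_2(W)$.

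Next I would prove the first equality. From the long exact sequence of the pair $(W,M)$ with $\Z_p$ coefficients, the universal coefficient theorem (together with the absence of $p$-torsion in $H_1(M)$ and $H_1(W)$) shows that $H_1(M;\Z_p) \to H_1(W;\Z_p)$ is an isomorphism. Therefore the connecting map $H_2(W,M;\Z_p) \to H_1(M;\Z_p)$ vanishes, giving
\[
\Coker\{H_2(M;\Z_p) \to H_2(W;\Z_p)\} \cong H_2(W,M;\Z_p).
\]
Poincar\'e–Lefschetz duality identifies this with $H^2(W;\Z_p)$, whose $\Z_p$-dimension is $b_2(W;\Z_p)$ since $\Z_p$ is a field. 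Combined with the previous paragraph, this yields $\dim_{\Z_p}\Coker = b_2(W;\Z_p) = b_2(W) = 2r$.

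For the ``consequently'' clause, I would take the $(n)$-lagrangian $\{x_i\}$ and $(n)$-dual $\{y_j\}$ in $H_2(W;\Z[\pi/\pi^{(n)}])$ guaranteed by Definition~\ref{definition:solvable-knots} and reduce them modulo $p$ via the natural coefficient change $\Z[\pi/\pi^{(n)}] \to \Z_p[\pi/\pi^{(n)}]$. Because the intersection pairing is functorial in the coefficient ring, the relations $\lambda_n(x_i,x_j) = 0$ and $\lambda_n(x_i,y_j) = \delta_{ij}$ survive the reduction to $\Z_p[\pi/\pi^{(n)}]$. The rank count $2r = \rank_{\Z} H_2(W;\Z) = b_2(W) = \dim_{\Z_p}\Coker\{H_2(M;\Z_p) \to H_2(W;\Z_p)\}$ matches the requirement of Definition~\ref{definition:integral-n-bordism}, so $W$ is a $\Z_p$-coefficient $(n)$-bordism.

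There is no substantive obstacle here; the result is essentially bookkeeping once Lemma~\ref{lemma:mod-p-betti-number-of-solution} is in hand. The only delicate point is ensuring that the connecting homomorphism in the $\Z_p$-coefficient long exact sequence vanishes, which reduces to a universal-coefficients check using the no-$p$-torsion hypothesis on $H_1(M)$.
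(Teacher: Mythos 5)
Your proof is correct and follows essentially the same route as the paper: Lemma~\ref{lemma:mod-p-betti-number-of-solution} for $b_2(W;\Z_p)=b_2(W)$, and duality plus universal coefficients plus the isomorphism $H_1(M)\cong H_1(W)$ for the first equality. The only cosmetic difference is that you identify the cokernel with $H_2(W,M;\Z_p)\cong H^2(W;\Z_p)$ via the vanishing connecting map, whereas the paper dually observes that $H_2(M;\Z_p)\to H_2(W;\Z_p)$ is itself the zero map; the reduction of the $(n)$-lagrangian and its dual mod $p$ for the final clause is exactly what the paper intends.
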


\begin{proof}
  $H_2(M;\Z_p)\rightarrow H_2(W;\Z_p)$ is equal to
  $H^1(M;\Z_p)\rightarrow H^2(W,M;\Z_p)$ by duality.  The latter is a
  zero map by universal coefficient, since $H_1(M) \to H_1(W)$ is an
  isomorphism.  From this the first equality follows.  The second
  equality follows from
  Lemma~\ref{lemma:mod-p-betti-number-of-solution}.
\end{proof}

\subsection{Analysis of mixed-coefficient commutator quotient coefficient systems}


In this subsection we give a proof of the following assertion used in
Section~\ref{subsection:non-solvability-of-examples}.  We use the
notations in Section~\ref{subsection:non-solvability-of-examples}, and
assume the hypothesis of
Theorem~\ref{theorem:linearly-independent-knots}.  For simplicity, we
denote $J^1_k$ by~$J_k$ and denote $p_1$ by~$p$.

\theoremstyle{plain}
\newtheorem*{tempthm}{Theorem~\ref{theorem:nontriviality-of-coefficient-system}}
\begin{tempthm}
  For $k=0,1,\ldots, n$, the homomorphism
  \[
  \phi_k\colon \pi_1(W_k) \to \pi_1(W_k)/\cP^{n-k+1}\pi_1(W_k).
  \]
  sends a meridian of $J_k$ into the abelian subgroup
  $\cP^{n-k}\pi_1(W_k)/\cP^{n-k+1}\pi_1(W_k)$.  Furthermore, the image
  of a meridian of $J_k$ under $\phi_k$ has order $p$ if $k=0$, and
  has order $\infty$ if $k>0$.
\end{tempthm}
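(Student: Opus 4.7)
The statement has two parts---a containment and an order claim---and the plan is to prove both by downward induction on $k$, starting from the vacuous base case $k=n$.

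For the containment $\mu_{J_k}\in \cP^{n-k}\pi_1(W_k)$, the inductive step rests on two observations. First, characteristicity of the subgroups $\cP^{j}$ implies that the inclusion $\pi_1(W_{k+1})\to\pi_1(W_k)$ maps $\cP^{n-k-1}\pi_1(W_{k+1})$ into $\cP^{n-k-1}\pi_1(W_k)$; combined with the fact that the cobordism $E_k$ makes $\mu_{K_k}$ freely homotopic to $\mu_{J_{k+1}}$, the inductive hypothesis yields $\mu_{K_k}\in\cP^{n-k-1}\pi_1(W_k)$. Second, $\pi_1(X(K_k))$ is normally generated by $\mu_{K_k}$ and $\cP^{n-k-1}\pi_1(W_k)/\cP^{n-k}\pi_1(W_k)$ is a normal abelian subgroup of $\pi_1(W_k)/\cP^{n-k}\pi_1(W_k)$, so the entire image of $\pi_1(X(K_k))$ modulo $\cP^{n-k}\pi_1(W_k)$ is abelian. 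Hence $[\pi_1(X(K_k)),\pi_1(X(K_k))]\subset \cP^{n-k}\pi_1(W_k)$; since $\lk(K_k,\eta_k)=0$, the null-homologous curve $\eta_k$ lies in this commutator subgroup, and conjugacy of $\mu_{J_k}$ with $\eta_k$ inside $E_k$ then gives the claim.

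For the order assertion, the quotient $\cP^{n-k}\pi_1(W_k)/\cP^{n-k+1}\pi_1(W_k)$ is a module over $R_{n-k}$---a $\Q$-vector space for $k>0$ and a $\Z_p$-vector space for $k=0$---so it suffices to show $\mu_{J_k}\notin\cP^{n-k+1}\pi_1(W_k)$. Arguing by contradiction, suppose $\mu_{J_k}$, and hence $\eta_k$, lies in $\cP^{n-k+1}\pi_1(W_k)$. Consider the coefficient system $\pi_1(W_k)\to\Gamma:=\pi_1(W_k)/\cP^{n-k}\pi_1(W_k)$; since all $R_j$ for $j<n-k$ are $\Q$, this quotient is PTFA with $\Gamma^{(n-k)}=\{e\}$. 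By Lemma~\ref{lemma:integral-solution-is-mod-p-bordism} (when $R_{n-k}=\Z_p$), $W_k$ is an $R_{n-k}$-coefficient $(n-k)$-bordism, so Theorem~\ref{theorem:self-annihilating-submodule-blanchfield-form} applies: the submodule $P=\Ker\{H_1(M(K_k);R_{n-k}\Gamma)\to H_1(W_k;R_{n-k}\Gamma)\}$ is self-annihilating under the Blanchfield pairing. On the other hand, Theorem~\ref{theorem:Blanchfield-change-of-coefficients} identifies $H_1(M(K_k);R_{n-k}\Gamma)$ with a base change of the classical $R_{n-k}$-coefficient Alexander module of $K_k$, and the Blanchfield pairing with the base change of the classical Blanchfield pairing. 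Since this classical module is nontrivial and cyclic over the PID $R_{n-k}[t^{\pm1}]$, generated by $[\eta_k]$ and carrying a nonsingular Blanchfield form, $[\eta_k]$ does not pair to $0$ with itself---contradicting $[\eta_k]\in P$.

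The chief technical obstacle lies in the case $k=0$, where $R_n=\Z_p$ forces use of the modulo $p$ noncommutative Blanchfield pairing developed in Section~\ref{section:mod-p-Blanchfield-duality}. This in turn rests on $\Z_p[\Gamma]$ being an Ore domain (Lemma~\ref{lemma:mod-p-PTFA-group-ring-is-ore-domain}), the vanishing of $H_1(M(K_0);\K)$ over the quotient skew-field (Lemma~\ref{lemma:1st-betti-number-over-skew-field}), and the modulo $p$ analogues of the change-of-coefficients and self-annihilating-submodule theorems. A secondary technical point is that the inductive argument requires $\mu_{K_k}$ to have infinite order in the intermediate PTFA quotient $\Gamma$ (in order for Theorem~\ref{theorem:Blanchfield-change-of-coefficients} to identify classical and higher-order Alexander modules); this is supplied by feeding the already-proven order claim for $k+1$ back into the inductive step.
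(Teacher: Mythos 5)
Your overall strategy (downward induction; the containment via normal generation by the meridian together with $\lk(K_k,\eta_k)=0$; the order claim via the higher-order Blanchfield form, the self-annihilating kernel for $R$-coefficient bordisms, and change of coefficients back to the classical, nonsingular Blanchfield form, with the mod~$p$ theory at $k=0$) is the same as the paper's. But there is a genuine gap at the point you dismiss as a ``secondary technical point.'' You run the Blanchfield argument over $W_k$ with boundary piece $M(K_k)$ and coefficient group $\Gamma=\pi_1(W_k)/\cP^{n-k}\pi_1(W_k)$, and for Theorem~\ref{theorem:Blanchfield-change-of-coefficients} you need the meridian $\nu_k\simeq\mu_{k+1}$ to have \emph{infinite order in $\Gamma$}. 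The induction hypothesis only gives infinite order in $\pi_1(W_{k+1})/\cP^{n-k}\pi_1(W_{k+1})$, and the inclusion-induced map from that quotient to $\Gamma$ is not a priori injective: gluing $E_k$ onto $W_{k+1}$ adds relations (up to homotopy it attaches a solid torus along $\partial X(J_k)$, killing the longitude $\lambda$ of $J_k$), so the meridian could in principle die or become torsion in the larger manifold. Closing this gap is exactly the content of Assertion~\ref{assertion:derived-series-quotients-of-W_k}: one shows $\pi_1(W_k)\cong\pi_1(W_{k+1})/\langle\lambda\rangle$ and that $[\lambda]\in\pi_1(X(J_k))^{(2)}\subset\langle\mu\rangle^{(2)}$ with $[\mu]\in\pi_1(W_{k+1})^{(n-k-1)}$, so that $\lambda$ already lies in $\pi_1(W_{k+1})^{(n-k+1)}$ and the relevant quotients of $\pi_1(W_k)$ and $\pi_1(W_{k+1})$ agree. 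Your proposal contains no such comparison, and without it the induction does not close. (The paper arranges the Blanchfield step over $W_{k+1}$, where the induction hypothesis applies verbatim, and only invokes Assertion~\ref{assertion:derived-series-quotients-of-W_k} at the end to transfer nontriviality to $W_k$; either way, this $\pi_1$-comparison across $E_k$ is indispensable.)

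Two smaller points. First, you justify ``$W_k$ is an $R_{n-k}$-coefficient $(n-k)$-bordism'' by citing Lemma~\ref{lemma:integral-solution-is-mod-p-bordism}, but that lemma applies to the integral solutions $V$ and $V_{i,r}$, not to the glued manifold $W_k$; one still needs the Mayer--Vietoris computation $\Coker\{H_2(\partial W_k;R)\to H_2(W_k;R)\}\cong H_2(V;R)\oplus\bigoplus H_2(V_{i,r};R)$ (Assertion~\ref{assertion:W_k-is-n-bordism}) to assemble the lagrangians and duals of the pieces into ones for $W_k$. Second, the base case $k=n$ of the order claim is not vacuous: it requires $H_1(W_n)\cong H_1(M(J_n))\cong\Z$ generated by the meridian, which is where one uses that $W_n$ is a solution. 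The containment half of your argument (functoriality of the series under the inclusion-induced map, normal generation of $\pi_1(X(K_k))$ by $\nu_k$, and $[\eta_k]$ lying in the commutator subgroup because it is null-homologous in $X(K_k)$) is fine and essentially the paper's.
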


We denote the meridian of $J_k$ and $K_k$ in $M(J_k)$ and $M(K_k)$ by
$\mu_k$ and $\nu_k$, respectively.  Recall that $E_k$ is the standard
cobordism between $M(J_k)\cup M(K_k)$ and~$M(J_{k+1})$ (see Section
\ref{subsection:non-solvability-of-examples}, Proof of
non-$(n.5)$-solvability).  Then $\nu_k\subset M(K_k) \subset E_k$ and
$\mu_{k+1} \subset M(J_{k+1}) \subset E_k$ are isotopic in $E_k$, and
$\mu_k\subset M(J_k)\subset E_k$ and~$\eta_k\subset M(K_k)\subset
E_k$ are isotopic in~$E_k$.

\begin{assertion}
  \label{assertion:derived-series-quotients-of-W_k}
  Suppose $\eta_k$ has linking number zero with $K_k$ for any~$k$.
  Then the inclusion $W_k \to W_{k+1}$ gives rise to an isomorphism
  \[
  \pi_1(W_k)/\pi_1(W_k)^{(n-k+1)} \to
  \pi_1(W_{k+1})/\pi_1(W_{k+1})^{(n-k+1)}.
  \]
  for any~$k<n$.  Consequently, the induced map
  \[
  \cP^{n-k}\pi_1(W_k)/\cP^{n-k+1}\pi_1(W_k) \to
  \cP^{n-k}\pi_1(W_{k+1})/\cP^{n-k+1}\pi_1(W_{k+1})
  \]
  is an isomorphism.
\end{assertion}

To prove this, let $X$ be a copy of $X(J_k)$ and view it as a subspace
of
\[
X \amalg_{\partial X} (M(K_k)-N(\eta_k))=M(J_{k+1})\subset W_{k+1}.
\]
Denote a meridian and longitude of $J_k$ on $\partial X$ by $\mu$ and
$\lambda$, respectively.  From the construction of $E_k$, it follows
that $\big( M(J_{k+1}) \cup (S^1 \times D^2) \big)
\mathbin{/} \partial X\sim S^1\times S^1$ is a deformation retract
of~$E_k$, where $\mu$ and $\lambda$ are identified with $S^1\times *$
and $*\times S^1$, respectively.  Therefore $W_k=E_k
\amalg_{M(J_{k+1})} W_{k+1}$ is homotopy equivalent to $\big(W_{k+1}
\cup (S^1 \times D^2)\big) \mathbin{/} \partial X \sim S^1\times S^1$.
It follows that
\[
\pi_1(W_k)/\pi_1(W_k)^{(n-k+1)} \cong \pi_1(W_{k+1}) / \langle
\pi_1(W_{k+1})^{(n-k+1)}, \lambda\rangle
\]
where $\langle - \rangle$ denotes the normal subgroup in
$\pi_1(W_{k+1})$ generated by~$-$.  Since $[\lambda]$ is in
$\pi_1(X)^{(2)}\subset \langle \mu\rangle^{(2)}$, it suffices to show
$[\mu]\in \pi_1(W_{k+1})^{(n-k-1)}$.  If $k=n-1$, it holds obviously
since $\pi_1(W_{k+1})^{(n-k-1)}=\pi_1(W_{k+1})$.  Suppose $k\le n-2$.
Since $\mu$ lies in $M(J_{k+1})\subset W_{k+1}$, $[\mu]\in \langle
\mu_{k+1}\rangle$.  Since $\eta_{k+1}$ has linking number zero with
$K_{k+1}$, we have, in $\pi_1(W_{k+1})$, $\langle\mu_{k+1}\rangle =
\langle\eta_{k+1}\rangle = \langle\nu_{k+1}\rangle^{(1)} =
\langle\mu_{k+2}\rangle^{(1)}$.  Applying it repeatedly, it follows
that $[\mu] \in \langle\mu_{k+1}\rangle \subset
\langle\mu_n\rangle^{(n-k-1)}$.  This proves the first conclusion of
Assertion~\ref{assertion:derived-series-quotients-of-W_k}.  The second
conclusion of
Assertion~\ref{assertion:derived-series-quotients-of-W_k} follows
since $\cP^iG$ is a characteristic subgroup of $G$
containing~$G^{(i)}$.

Also, in the above argument, we have observed that the meridian
$\mu_k$ in $M(J_k)\subset W_k$ lies in $\pi_1(W_k)^{(n-k)}\subset
\cP^{n-k} \pi_1(W_k)$.  From this the first conclusion of
Theorem~\ref{theorem:nontriviality-of-coefficient-system} follows.

\begin{assertion}
  \label{assertion:W_k-is-n-bordism}
  For $R=\Q$ or $\Z_p$ and for any $k$, $W_k$ is an $R$-coefficient
  $(n)$-bordism.
\end{assertion}

By straightforward Mayer-Vietoris arguments applied to our
construction of $W_k$, one can show $\Coker\{H_2(\partial W_k;R) \to
H_2(W_k;R)\}\cong H_2(V;R)\oplus \bigoplus_{i,j} H_2(V_{i,r};R)$.
(See, for a similar argument for a more complicated 4-manifold,
\cite[Proof of Proposition 8.2]{Cochran-Harvey-Leidy:2009-1}.)  Since
$W_n$ and $V_{i,r}$ are integral $(n)$-solutions of knots, $W_n$ and
the $V_{i,r}$ are $R$-coefficient $(n)$-bordisms by
Lemma~\ref{lemma:integral-solution-is-mod-p-bordism}.  By the above
$H_2(-;R)$ computation, it follows (the images of) the
$(n)$-lagrangians and $(n)$-duals of $W_n$ and the $V_{i,r}$ form an
$(n)$-langrangian and $(n)$-dual for~$W_k$.  This proves
Assertion~\ref{assertion:W_k-is-n-bordism}.

Now we use an induction on $k=n, n-1, \ldots, 0$ to prove that the
order of the image of a meridian of $J_k$ under
\[
\phi_k\colon \pi_1(W_k)\to \pi_1(W_k)/\cP^{n-k+1}\pi_1(W_k)
\]
is $p$ if $k=0$, $\infty$ otherwise.  For $k=n$, since $W_n$ is a
solution, we have $H_1(M(J_n)) \cong H_1(W_n) =
H_1(W_n)/\text{torsion} = \pi_1(W_n)/\cP^{1}\pi_1(W_n)$.  The
conclusion follows from this.  

Now, assuming that it holds for $k+1$, we will show the conclusion
for~$k$.  To simplify notations, in this proof we temporarily denote
$\pi=\pi_1(W_{k+1})$, $G=\pi/\cP^{n-k}\pi$.

%
By the first part of
Theorem~\ref{theorem:nontriviality-of-coefficient-system}, the
coefficient system $\pi_1(M_{J_{k+1}})\to G$ factors through the
abelian group $\cP^{n-k-1}\pi/\cP^{n-k}\pi$, and so it factors through
$H_1(M_{J_{k+1}})$.  By the induction hypothesis, the meridian
$[\mu_{k+1}]$ has infinite order in~$G$.  Thus, $H_1(M(J_{k+1}))\cong
\langle t|\cdot\rangle$ actually injects into~$G$.  Therefore, by
Theorem~\ref{theorem:Blanchfield-change-of-coefficients},
\[
  H_1(M(J_{k+1});R G) =
  RG \otimesover{R[t^{\pm1}]} H_1(M(J_{k+1});R[t^{\pm1}])
  = RG \otimesover{R[t^{\pm1}]} H_1(M(K_k);R[t^{\pm1}])
\]
Consider the following commutative diagram, where $R=R_{n-k}$:
\[
\xymatrix@C=1.5em@R=2em{
  H_1(M(J_{k+1});\Z G) \ar[r]\ar[d] &
  H_1(W_{k+1};\Z G) \ar[r]\ar[d] &
  \dfrac{\cP^{n-k}\pi}{\cP^{n-k+1}\pi} \ar@{_{(}->}[dl]
  \\
  H_1(M(J_{k+1});R G ) \ar[r] &
  H_1(W_{k+1};R G )
}
\]
We claim that for $[\eta_k]\in H_1(M(K_k);\Z[t^{\pm1}])$, the image of
$1\otimes [\eta_k]\in H_1(M(J_{k+1});\Z G)$ in $H_1(W_{k+1};RG)$ is
nontrivial.  Suppose not.  Then, by
Theorem~\ref{theorem:self-annihilating-submodule-blanchfield-form},
the noncommutative Blanchfield form on $H_1(M(J_{k+1});R G)$ vanishes
on $(1\otimes [\eta_k], 1\otimes [\eta_k])$.  By
Theorem~\ref{theorem:Blanchfield-change-of-coefficients}, it follows
that the classical Blanchfield form on
$H_1(M(J_{k+1});R[t^{\pm1}])=H_1(M(K_k);R[t^{\pm1}])$ vanishes on
$([\eta_k],[\eta_k])$.  But this is a contradiction since $[\eta_k]$
generates the nonzero module $H_1(M(K_k);R[t^{\pm1}])$ and the
classical Blanchfield form of a knot is always nonsingular (for both
$R=\Z_p$ and $\Q$).  Recall that, for $k=0$, we have $R_0=\Z_p$ and
$H_1(M(K_0);\Z_p[t^{\pm1}])\ne 0$ since the Alexander polynomial
$\Delta_{K_0}(t)$ is not a unit even in~$\Z_p[t^{\pm1}]$, by our
choice of the primes~$p_i$.  This proves the claim.

From the claim, it follows that the image of $[\eta_k]$ in
$\cP^{n-k}\pi/\cP^{n-k+1} \pi$ is nontrivial.  By
Assertion~\ref{assertion:derived-series-quotients-of-W_k}, it follows
that the image of $[\mu_k] = [\eta_k]$ in
\[
\frac{\cP^{n-k}\pi_1(W_k)}{\cP^{n-k+1}\pi_1(W_k)} \cong
\frac{\cP^{n-k}\pi}{\cP^{n-k+1} \pi}
\]
is nontrivial.  Since this is a torsion-free abelian group for $k\ne
0$ and a vector space over $\Z_{p}$ for $k=0$ by our choice of $\cP$,
the order of the image of $[\mu_k]$ is $p$ if $k=0$, and $\infty$
otherwise.  This completes the proof of
Theorem~\ref{theorem:nontriviality-of-coefficient-system}.

\bibliographystyle{amsalpha} \renewcommand{\MR}[1]{}

\bibliography{research}

\end{document}